\theoremstyle{plain}
\newtheorem{theorem}{Theorem}
\newtheorem*{remark}{Remark}
\newtheorem{lemma}[theorem]{Lemma}
\newtheorem{corollary}[theorem]{Corollary}
\newtheorem{proposition}[theorem]{Proposition}
 \newenvironment{proofof}[1]{ {\em Proof of #1.}}{\hfill \rule{2mm}{2mm} 
 }
\newlength{\widebarargwidth}
\newlength{\widebarargheight}
\newlength{\widebarargdepth}
\DeclareRobustCommand{\widebar}[1]{%
  \settowidth{\widebarargwidth}{\ensuremath{#1}}%
  \settoheight{\widebarargheight}{\ensuremath{#1}}%
  \settodepth{\widebarargdepth}{\ensuremath{#1}}%
  \addtolength{\widebarargwidth}{-0.3\widebarargheight}%
  \addtolength{\widebarargwidth}{-0.3\widebarargdepth}%
  \makebox[0pt][l]{\hspace{0.3\widebarargheight}%
    \hspace{0.3\widebarargdepth}%
    \addtolength{\widebarargheight}{0.3ex}%
    \rule[\widebarargheight]{0.95\widebarargwidth}{0.1ex}}%
  {#1}}
\long\def\@makecaption#1#2{
        \vskip 0.8ex
        \setbox\@tempboxa\hbox{\small {\bf #1:} #2}
        \parindent 1.5em  
        \dimen0=\hsize
        \advance\dimen0 by -3em
        \ifdim \wd\@tempboxa >\dimen0
                \hbox to \hsize{
                        \parindent 0em
                        \hfil 
                        \parbox{\dimen0}{\def\baselinestretch{0.96}\small
                                {\bf #1.} #2
                                } 
                        \hfil}
        \else \hbox to \hsize{\hfil \box\@tempboxa \hfil}
        \fi
        }
\newcommand{\half}{\frac{1}{2}}
\newcommand{\normal}{\ensuremath{\mathcal{N}}}
\newcommand{\bigo}{\ensuremath{\mathcal{O}}}
\renewcommand{\P}{\operatorname{\mathbb{P}}}
\newcommand{\E}{\operatorname{\mathbb{E}}}
\newcommand{\R}{\mathbb{R}}
\newcommand{\Rp}{\mathbb{R}_{+}}
\newcommand{\indi}{\mathds{1}}
\newcommand{\imnb}{\mathfrak{i}}
\newcommand{\rmd}{\mathrm{d}}
\newcommand{\abs}[1]{\left|#1\right|}
\newcommand{\vct}[1]{\bm{#1}}
\newcommand{\mtx}[1]{\bm{#1}}
\def\BC{\begin{center}}
\def\EC{\end{center}}
\def\BIT{\begin{itemize}}
\def\EIT{\end{itemize}}
\def\BET{\begin{enumerate}}
\def\EET{\end{enumerate}}
\def\BEQ{\begin{equation}}
\def\EEQ{\end{equation}}
\long\def\comment#1{}
\newcommand{\erfc}{\mathsf{erfc}}
\newcommand{\err}{\mathsf{err}}
\newcommand{\D}{\mathcal{D}}
\newcommand{\KL}{\mathsf{KL}} 
\newcommand{\TV}{\mathsf{TV}} 
\newcommand{\RS}{\mathsf{RS}} 
\newcommand{\RSB}{\mathsf{RSB}}
\newcommand{\xtt}{\textup{\texttt{x}}}
\newcommand{\x}{{\bm x}}
\newcommand{\z}{{\bm z}}
\newcommand{\Y}{{\bm Y}}
\newcommand{\W}{{\bm W}}
\begin{document}

\title{\bf{\Large{Fundamental limits of detection in the spiked Wigner model}}}

\author{Ahmed El Alaoui\thanks{Department of EECS, UC Berkeley, CA. Email: elalaoui@berkeley.edu}
\and
Florent Krzakala\thanks{Laboratoire de Physique Statistique, CNRS, PSL Universit\'es \& Ecole Normale Sup\'erieure, Sorbonne Universit\'es et Universit\'e Pierre \& Marie Curie, Paris, France.} 
\and
Michael I. Jordan\thanks{Departments of EECS and Statistics, UC Berkeley, CA.}
}

\date{}
\maketitle

\vspace*{-.3in} 
\begin{abstract}
We study the fundamental limits of detecting the presence of an additive rank-one perturbation, or spike, to a Wigner matrix.  
When the spike comes from a prior that is i.i.d.\  across coordinates, we prove that the log-likelihood ratio of the spiked model against the non-spiked one is asymptotically normal below a certain \emph{reconstruction threshold} which is not necessarily of a ``spectral" nature, and that it is degenerate above. This establishes the maximal region of contiguity between the planted and null models. It is known that this threshold also marks a phase transition for estimating the spike: the latter task is possible above the threshold and impossible below. Therefore, both estimation and detection undergo the same transition in this random matrix model. We also provide further information about the performance of the optimal test.   
Our proofs are based on Gaussian interpolation methods and a rigorous incarnation of the cavity method, as devised by Guerra and Talagrand
in their study of the Sherrington--Kirkpatrick spin-glass model.
\end{abstract}

\section{Introduction}
\label{sxn:intro}
Spiked models, which are distributions over matrices of the form ``signal + noise," have been a
mainstay in the statistical literature since their introduction by~\cite{johnstone2001distribution}
as models for the study of high-dimensional principal component analysis.
Spectral properties of these models have been extensively studied, in particular in random matrix theory,
where they are known as \emph{deformed ensembles}~\citep{peche2014ICM}. Landmark investigations in this
area~\citep{baik2005phase,baik2006eigenvalues,peche2006largest,feral2007largest,capitaine2009largest,bai2012sample,bai2008central}
have established the existence of a \emph{spectral threshold} above which the top eigenvalue detaches
from the bulk of eigenvalues and becomes informative about the spike, and below which the top eigenvalue
bears no information. Estimation using the top eigenvector undergoes the same transition, where it is known
to ``lose track" of the spike below the spectral threshold~\citep{paul2007asymptotics, nadler2008finite,
johnstone2009consistency, benaych2011eigenvalues}. Although these spectral analyses have provided many
insights, as have analyses based on more thoroughgoing usage of spectral data and/or more advanced
optimization-based procedures~\citep[see][and references therein]{ledoit2002some,amini2008high,berthet2013optimal,dobriban2017sharp}, they do not characterize the
fundamental limits of estimating the spike, or detecting its presence, from the observation of a
sample matrix. 
Important progress on the detection problem was made by 
\cite{onatski2013asymptotic,onatski2014signal} and \cite{johnstone2015testing}, who considered the spiked covariance model for a uniformly distributed unit norm spike, and studied the asymptotics of the likelihood ratio (LR) of a spiked alternative against a spherical null. They showed asymptotic normality of the log-LR below the spectral threshold \citep[also known as the BBP threshold, after][in this setting]{baik2005phase}, while it is degenerate, i.e., exponentially small (large) under the null (alternative), above it. Their proof is intrinsically tied to the assumption of a spherical prior since it relies on the rotational symmetry of the model to express the LR exclusively in terms of the spectrum, the joint distribution of which is available in closed form.

We focus in this paper on the \emph{spiked Wigner model}, which is the following symmetric random matrix model:
\begin{equation}\label{spiked_wigner_model}
\Y = \sqrt{\frac{\lambda}{N}} \x^* \x^{*\top} + \W,
\end{equation}
where $W_{ij} = W_{ji} \sim \normal(0,1)$ and $W_{ii} \sim \normal(0,\sigma^2)$, $\sigma>0$, are independent for all $1 \le i \le j \le N$.
The \emph{spike} vector $\x^* \in \R^N$ represents the signal to be recovered, or its presence detected.

We assume that the entries $x^*_i$ of the spike are i.i.d.\ from a prior distribution $P_{\xtt}$ on $\R$ having \emph{bounded} support.
The parameter $\lambda \ge 0$ plays the role of the signal-to-noise ratio, and the scaling by $\sqrt{N}$ is such that the signal and noise components of the observed data are of comparable magnitudes.
Upon observing $\Y$, we want to test whether $\lambda>0$ or $\lambda=0$.
We moreover want to understand the performance of the likelihood ratio test, which minimizes the sum of the Type-I and Type-II errors by the Neyman-Pearson lemma.

The testing problem becomes more subtle in our setting, where the spike comes from a product prior, since it is not clear that one does not lose power by discarding the eigenvectors of $\Y$. In fact, this situation presents a richer phenomenology: while the spherical case is characterized by the behavior of the spectrum, and the spectral threshold separates the regions of convergence and degeneracy of the LR, there are priors $P_{\xtt}$ in the i.i.d.\ case for which the spectral threshold loses its information-theoretic relevance. These priors exhibit a more subtle phase transition that happens \emph{strictly before} the spike manifests its presence in the spectrum. A desire to understand this phenomenon is the main impetus for the present work.   
 
This transition was discovered by~\cite{lesieur2015phase} while studying the estimation problem in the context of sparse PCA. \cite{perry2016optimality_annals} and \cite{banks2017information} proved the possibility of both estimation and asymptotically certain---we will say ``strong"---detection below the spectral threshold for certain sparse priors. However, their techniques---which are based on careful conditioning of the second moment of the LR---are not able to determine the phase transition threshold, the explicit form of which was conjectured by Lesieur et al. 

Our contribution is to rigorously pin down this phase transition for the detection problem. We prove asymptotic normality of the log-LR below a certain \emph{reconstruction threshold} $\lambda_c$ and degeneracy above it. This allows us to show mutual contiguity of the null and the alternative below $\lambda_c$ and to derive formulas for the Type-I and Type-II errors of the LR test, as well as the KL divergence and total variation distance, between the null and alternative. Our approach reposes on seminal work by Guerra and Talagrand in their study of the Sherrington-Kirkpatrick (SK) spin-glass model. 

The paper is organized as follows: Section~\ref{sxn:lr_rs_rec} sets up the problem, Section~\ref{sxn:fluctuations_wigner} contains our main results on LR fluctuations and the limits of detection, Section~\ref{sxn:replicas_nishimori} provides background on essential concepts from spin-glass theory that are necessary for the proof, and  Sections~\ref{sxn:proof_fluctuations}, \ref{sxn:asymptotic_decoupling} and~\ref{sxn:overlap_convergence} are devoted to the detailed proofs.

\section{The LR, the $\RS$ formula and the reconstruction threshold}
\label{sxn:lr_rs_rec}
\subsection{The LR} 
We denote by $\P_{\lambda}$ the joint probability law of the observations, $\Y=\{Y_{ij} : 1\le i \le j \le N\}$, as per~\eqref{spiked_wigner_model} and we define the likelihood ratio or Radon-Nikodym derivative of $\P_{\lambda}$ with respect to $\P_0$ as
\begin{equation}\label{likelihood_ratio}
L(\cdot;\lambda) \equiv  \frac{\mathrm{d}\mathbb{P}_{\lambda}}{\mathrm{d}\mathbb{P}_{0}}.
\end{equation}
Conditioning on $\x^*$ and using the Gaussianity of $\W$ yields the formula
\begin{equation}\label{likelihood_ratio_explicit}
L(\Y;\lambda) = \int \exp\Big(\sum_{i < j}\sqrt{\frac{\lambda}{N}} Y_{ij}x_ix_j -\frac{\lambda}{2N} x_i^2x_j^2 
+ \frac{1}{\sigma^2}\sum_{i=1}^N \sqrt{\frac{\lambda}{N}} Y_{ii}x_i^2 -\frac{\lambda}{2N} x_i^4
\Big) ~\rmd P_{\xtt}^{\otimes N}(\x),
\end{equation}
for any fixed $\Y$.
Define the \emph{free energy} of the planted model $\P_{\lambda}$ as
\begin{equation}\label{free_energy}
f_N := \frac{1}{N} \E_{\P_{\lambda}}\log L(\Y;\lambda) = \frac{1}{N} D_{\KL}(\P_\lambda, \P_0),
\end{equation}
where $D_{\KL}$ is the Kullback-Leibler divergence between probability measures.     
The reconstruction threshold $\lambda_c$ is defined as the largest positive number below which the limit of $f_N$ vanishes. 
This latter limit, referred to as the \emph{replica-symmetric} ($\RS$) formula, provides a full characterization
of the limits of estimating the spike with non-trivial accuracy~\citep{barbier2016mutual,lelarge2016fundamental}. 

\subsection{The $\RS$ formula} For $r \ge 0$, consider the function
\begin{equation}\label{log_partition_scalar_gaussian_channel}
\psi(r) := \E_{x^*,z} \log \int \exp\left(\sqrt{r}zx + r xx^* - \frac{r}{2} x^2\right) \rmd P_{\xtt}(x),
\end{equation}
where $z \sim\normal(0,1)$, and $x^* \sim P_{\xtt}$.
This is the $\KL$ divergence between the distributions of the random variables $y = \sqrt{r}x^*+z$ and $z$. We define the replica-symmetric potential
\begin{equation}\label{RS_potential}
F(\lambda, q) := \psi(\lambda q) -\frac{\lambda q^2}{4},
\end{equation}
and the replica-symmetric formula
\begin{equation}\label{RS_formula}
\phi_{\RS}(\lambda) := \sup_{q \ge 0}~ F(\lambda, q).
\end{equation}
A central result in this context, which was conjectured by~\cite{lesieur2015phase},
and then proved in a sequence of papers~\citep{deshpande2016asymptotic,barbier2016mutual,krzakala2016mutual,lelarge2016fundamental,alaoui2018estimation},
is that free energy $f_N$ converges to the $\RS$ formula for all $\lambda \ge 0$:
\begin{equation}\label{convergence_free_energy}
f_N ~~ \longrightarrow ~~\phi_{\RS}(\lambda).
\end{equation}
In particular the limit is independent of $\sigma$, i.e., it is insensitive to $(Y_{ii})_{i=1}^N$.

The values of $q$ that maximize the $\RS$ potential and their properties play an important role. \cite{lelarge2016fundamental}
proved that the map $q \mapsto F(\lambda,q)$ has a unique maximizer $q^* = q^*(\lambda)$ for all $\lambda \in \D$ where $\D = \R_+ \setminus \mbox{countable set}$. Moreover, they showed that the map $ \lambda \in \D \mapsto q^*(\lambda)$ is non-decreasing, and
\begin{equation}\label{limits_q_star}
\lim_{\underset{\lambda \in \D}{\lambda \to 0}} q^*(\lambda) = \E_{P_{\xtt}}[X]^2, \qquad \text{and} \qquad \lim_{\underset{\lambda \in \D}{\lambda \to \infty}} q^*(\lambda) = \E_{P_{\xtt}}[X^2].
\end{equation}
where $X \sim P_{\xtt}$.
One can interpret the value $q^*(\lambda)$ as the best overlap an estimator $\widehat{\theta}(\Y)$ based on observing $\Y$ can have with the spike $\x^*$. Indeed, Lelarge and Miolane also showed that the squared overlap $(\frac{1}{N}\x^\top\x^*)^2$ between the spike $\x^*$ and a random draw $\x$ from the posterior $\P_\lambda(\cdot | \Y)$ concentrates about $q^*(\lambda)^2$.

\subsection{The reconstruction threshold} 

The first limit in~\eqref{limits_q_star} shows that when the prior $P_{\xtt}$ is not centered, it is always possible to have a non-zero overlap with $\x^*$ (just by guessing at random from the prior). An interesting situation then is when the prior has zero mean. Since $q^*$ is a non-decreasing function of $\lambda$, it is useful to define the critical value of $\lambda$ below which a non-zero overlap with $\x^*$ is impossible:
\begin{align}\label{lambda_critical}
\lambda_c &:= \sup \big\{ \lambda >0 ~:~ q^*(\lambda) =0\big\}\\
&~=\sup \big\{ \lambda >0 ~:~ \phi_{\RS}(\lambda) =0\big\}\nonumber.
\end{align}
The second equality follows by the a.e.\ uniqueness of the maximizer $q^*$.
We refer to $\lambda_c$ as the \emph{reconstruction threshold}. 
The next lemma establishes a natural bound on $\lambda_c$.
\begin{lemma} \label{spectral_bound}
We have
$\lambda_c \cdot \left(\E_{P_{\xtt}}[X^2]\right)^2 \le 1$.
\end{lemma}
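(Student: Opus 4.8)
The plan is to show that $\phi_{\RS}(\lambda)>0$ for every $\lambda$ with $\lambda\,(\E_{P_{\xtt}}[X^2])^2>1$. Since $\lambda_c=\sup\{\lambda>0:\phi_{\RS}(\lambda)=0\}$ by~\eqref{lambda_critical}, this shows $\{\lambda>0:\phi_{\RS}(\lambda)=0\}\subseteq(0,\,1/(\E_{P_{\xtt}}[X^2])^2\,]$ and hence $\lambda_c\le 1/(\E_{P_{\xtt}}[X^2])^2$, which is the claim. Because $\phi_{\RS}(\lambda)=\sup_{q\ge 0}F(\lambda,q)\ge F(\lambda,q)$ for every $q\ge 0$ and $F(\lambda,0)=\psi(0)=0$, it suffices to exhibit, for each such $\lambda$, an arbitrarily small $q>0$ with $F(\lambda,q)>0$. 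This is a local question about $q\mapsto F(\lambda,q)=\psi(\lambda q)-\lambda q^2/4$ near $q=0$, so the whole argument reduces to a second-order Taylor expansion of $\psi$ at the origin.

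The required input is $\psi(0)=0$, $\psi'(0)=\tfrac12\,\E_{P_{\xtt}}[X]^2$, and — when the prior is centered, i.e.\ $\E_{P_{\xtt}}[X]=0$ — $\psi''(0)=\tfrac12\,(\E_{P_{\xtt}}[X^2])^2$. I would get these by expanding the exponential in~\eqref{log_partition_scalar_gaussian_channel} in powers of $\sqrt r$: the half-integer powers of $r$ come multiplied by odd powers of the standard Gaussian $z$ and vanish under $\E_z$, and collecting the surviving terms with $\E[z^2]=1$, $\E[z^4]=3$ and the moments $m_k:=\E_{P_{\xtt}}[X^k]$ (finite, as $P_{\xtt}$ has bounded support) produces the stated coefficients. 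An alternative route is to use the standard identity $\psi(r)=\tfrac r2\,\E[(X^*)^2]-I(X^*;\sqrt r\,X^*+Z)$ together with the I-MMSE relation $\tfrac{d}{dr}I(X^*;\sqrt r\,X^*+Z)=\tfrac12\,\MMSE(r)$, $\MMSE(0)=\var(X)$, and $\tfrac{d}{dr}\MMSE(r)\big|_{r=0}=-\var(X)^2$, which gives $\psi'(0)=\tfrac12\,\E_{P_{\xtt}}[X]^2$ and $\psi''(0)=\tfrac12\,\var(X)^2$.

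To finish, split on the value of $m_1=\E_{P_{\xtt}}[X]$. If $m_1\ne 0$, then $F(\lambda,q)=\tfrac12\lambda m_1^2\,q+O(q^2)>0$ for all small $q>0$ and every $\lambda>0$, so $\phi_{\RS}(\lambda)>0$ for all $\lambda>0$, $\lambda_c=0$, and the bound is trivial. If $m_1=0$, then $m_2=\var(X)$ and $F(\lambda,q)=\tfrac{\lambda}{4}(\lambda m_2^2-1)\,q^2+o(q^2)$, which is strictly positive for all small enough $q>0$ exactly when $\lambda m_2^2>1$; hence $\phi_{\RS}(\lambda)>0$ whenever $\lambda\,(\E_{P_{\xtt}}[X^2])^2>1$, completing the proof. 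The only technical care needed is in justifying differentiation of $\psi$ under the expectation and controlling the Taylor remainder — routine, since the bounded support of $P_{\xtt}$ makes the integrand of~\eqref{log_partition_scalar_gaussian_channel} and its $r$-derivatives uniformly bounded over $x^*\in\operatorname{supp}(P_{\xtt})$ with at most Gaussian growth in $z$ — and in checking that the $\sqrt r$-expansion collapses to integer powers of $r$ after integrating out $z$. I expect this last bookkeeping, hence the exact value $\psi''(0)=\tfrac12(\E_{P_{\xtt}}[X^2])^2$, to be the real crux: it is precisely this constant that places the threshold at $1/(\E_{P_{\xtt}}[X^2])^2$, which is also the BBP spectral threshold of the model — explaining the name of the lemma and giving an alternative proof via the fact that PCA attains positive overlap with $\x^*$, hence $q^*(\lambda)>0$, above that threshold.
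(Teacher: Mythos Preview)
Your proof is correct and follows essentially the same approach as the paper: both arguments Taylor-expand $F(\lambda,q)$ at $q=0$ using $\psi'(0)=\tfrac12\E_{P_{\xtt}}[X]^2$ and $\psi''(0)=\tfrac12(\E_{P_{\xtt}}[X^2])^2$ to show that $q=0$ is not a maximizer (equivalently, $\phi_{\RS}(\lambda)>0$) whenever $\lambda(\E_{P_{\xtt}}[X^2])^2>1$. The paper simply assumes the prior is centered and phrases the conclusion via $q^*(\lambda)>0$, whereas you also dispatch the non-centered case and give more detail on computing the derivatives, but the substance is the same.
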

\begin{proof}
Indeed, assume that $P_{\xtt}$ is centered, and let $\lambda > (\E[X^2])^{-2}$. Since $\psi'(0) = \half\E_{P_{\xtt}}[X]^2 = 0$ and $\psi''(0) = \half(\E_{P_{\xtt}}[X^2])^2$, we see that $\partial_q F(\lambda, 0) = 0$ and $\partial_q^2 F(\lambda,0) = \frac{\lambda}{2}(\lambda \E_{P_{\xtt}}[X^2]^2 - 1)>0$. So $q=0$ cannot be a maximizer of $F(\lambda,\cdot)$. Therefore $q^*(\lambda)>0$ and $\lambda \ge \lambda_c$.
\end{proof}
The importance of Lemma~\ref{spectral_bound} stems from the fact that the value $\left(\E_{P_{\xtt}}[X^2]\right)^{-2}$ is the spectral threshold previously discussed. Above this value, the first eigenvalue of the matrix $\Y$
detaches from the bulk~\citep{peche2006largest,capitaine2009largest,feral2007largest}. This value also marks the limit below which the first eigenvector of $\Y$ captures no information about the spike $\x^*$~\citep{benaych2011eigenvalues}.
The inequality in Lemma~\ref{spectral_bound} can be strict or turn into equality depending on the prior $P_{\xtt}$.
For instance, there is equality if the prior is Gaussian or Rademacher---so that the first eigenvector overlaps with the spike as soon as estimation becomes possible at all---and strict inequality in the case of the (sufficiently) sparse Rademacher prior $P_{\xtt} = \frac{\rho}{2}\delta_{-1/\sqrt{\rho}} + (1-\rho)\delta_{0} + \frac{\rho}{2}\delta_{+1/\sqrt{\rho}}$. More precisely, there exists a value
\[\rho^* = \inf \big\{\rho \in (0,1) ~:~ \psi'''(0)<0 \big\} \approx 0.092,\]
such that $\lambda_c = 1$ for $\rho \ge \rho^*$, and $\lambda_c < 1$ for $\rho < \rho^*$.
In the latter case, the spectral approach to estimating $\x^*$ fails for $\lambda \in (\lambda_c, 1)$, and it is believed that no polynomial time algorithm succeeds in this region~\citep{lesieur2015phase,krzakala2016mutual,banks2017information}.

\section{Fluctuations below the reconstruction threshold}
\label{sxn:fluctuations_wigner}

In this section we study the behavior of $\log L$. It can be seen by a standard concentration-of-measure argument that for all $\lambda > 0$, $\log L(\Y;\lambda)$ concentrates about its expectation with fluctuations of order $\sqrt{N}$. While this bound is likely to be of the right order above $\lambda_c$, it is very pessimistic below $\lambda_c$. Indeed, we will show that the fluctuations are of constant order with a Gaussian limiting law in this regime. This behavior of unusually small fluctuations is often referred to as ``super-concentration." We refer to \cite{chatterjee2014superconcentration} for more on this topic. 
Throughout the rest of the paper, except in Section~\ref{sxn:diagonal_kept}, we discard the diagonal terms $Y_{ii}$ from the observations: we formally take $\sigma=+\infty$ in~\eqref{likelihood_ratio_explicit}. (See the Remark below).
\begin{theorem} \label{central_limit_theorem}
Assume that the prior $P_{\xtt}$ is centered, has unit variance and bounded support. Also, let $\sigma=+\infty$.
For all $\lambda <\lambda_c$,
\[\log L(\Y;\lambda) \rightsquigarrow \normal\left(\pm\frac{1}{4} \left(-\log(1-\lambda)-\lambda\right),\frac{1}{2} \left(-\log(1-\lambda)-\lambda\right)\right),\]
 where the plus sign holds under the alternative $\Y \sim \P_{\lambda}$ and the minus sign under the null $\Y \sim \P_{0}$. The symbol $``\rightsquigarrow"$ denotes convergence in distribution as $N \to \infty$.
\end{theorem}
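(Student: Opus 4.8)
The plan is to turn the signal-to-noise ratio into a time parameter and realize $\log L(\Y;\lambda)$ as the terminal value of an explicit It\^o process. Let $(B_{ij})_{i<j}$ be independent standard Brownian motions and set $Y_{ij}(t)=\tfrac{t}{\sqrt N}x_i^*x_j^*+B_{ij}(t)$ for $t\ge0$ (under $\P_0$ simply $Y_{ij}(t)=B_{ij}(t)$); then $\Y(\lambda)/\sqrt\lambda$ has law \eqref{spiked_wigner_model} at SNR $\lambda$, and $G_t:=\log L(\Y(t);t)$ — reading \eqref{likelihood_ratio_explicit} with the diagonal dropped and SNR $=t$ — satisfies $G_\lambda\overset{d}{=}\log L(\Y;\lambda)$. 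I would apply It\^o's formula to $G_t=\log Z_t$, where $Z_t:=e^{G_t}=\int e^{H_t(\x)}\,\mathrm dP_{\xtt}^{\otimes N}(\x)$ and $H_t$ is the exponent in \eqref{likelihood_ratio_explicit} at SNR $t$. Under $\P_0$, $Z_t$ is a positive $(\mathcal F_t)$-martingale (the density of $\P_\lambda$ restricted to the observations up to time $t$), and It\^o yields
\[
G_\lambda=M_\lambda-\tfrac12\langle M\rangle_\lambda,\qquad
M_\lambda:=\int_0^\lambda\frac1{\sqrt N}\sum_{i<j}\langle x_ix_j\rangle_t\,\mathrm dB_{ij}(t),\qquad
\langle M\rangle_\lambda=\int_0^\lambda\frac1N\sum_{i<j}(\langle x_ix_j\rangle_t)^2\,\mathrm dt ,
\]
with $\langle\cdot\rangle_t$ the Gibbs average against $e^{H_t}\mathrm dP_{\xtt}^{\otimes N}$. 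The same computation under $\P_\lambda$, run with the innovation representation $\mathrm dY_{ij}(t)=\tfrac1{\sqrt N}\langle x_ix_j\rangle_t\,\mathrm dt+\mathrm d\bar B_{ij}(t)$ (the $\langle x_ix_j\rangle_t$ being exactly the posterior means $\E[x_i^*x_j^*\mid\mathcal F_t]$), gives $G_\lambda=\bar M_\lambda+\tfrac12\langle\bar M\rangle_\lambda$ — the flipped sign of the $\tfrac12\langle M\rangle$ drift is precisely the $\pm$ in the statement.

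It then suffices to prove the null case, because the limit law $\normal(-\tfrac14 c(\lambda),\tfrac12 c(\lambda))$ with $c(\lambda):=-\log(1-\lambda)-\lambda$ satisfies $\E e^{X}=1$: Le Cam's first and third lemmas upgrade it to the mutual contiguity of $\P_\lambda$ and $\P_0$ and to the alternative statement $\normal(+\tfrac14 c(\lambda),\tfrac12 c(\lambda))$. For the null case two inputs finish the job. First, the martingale CLT for continuous martingales: if $\langle M\rangle_\lambda\to v$ in probability for a deterministic $v>0$, then $M_\lambda\rightsquigarrow\normal(0,v)$ (time-change $M_\lambda=\beta_{\langle M\rangle_\lambda}$ to a Brownian motion, or use that $\exp(\mathrm i\theta M_t+\tfrac{\theta^2}{2}\langle M\rangle_t)$ is a martingale), hence $G_\lambda=M_\lambda-\tfrac12\langle M\rangle_\lambda\rightsquigarrow\normal(-\tfrac v2,v)$. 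Second, and this is the substance, the identification
\[
\langle M\rangle_\lambda=\int_0^\lambda\frac1N\sum_{i<j}(\langle x_ix_j\rangle_t)^2\,\mathrm dt\ \xrightarrow{\ P\ }\ \int_0^\lambda\frac12\,\frac{t}{1-t}\,\mathrm dt=\tfrac12\big(-\log(1-\lambda)-\lambda\big),\qquad\lambda<\lambda_c ,
\]
which pins down $v=\tfrac12 c(\lambda)$, exactly the variance claimed.

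Using $\sum_{i<j}(\langle x_ix_j\rangle_t)^2=\tfrac12\big(\langle(\x^{(1)}\!\cdot\x^{(2)})^2\rangle_t-\sum_i(\langle x_i^2\rangle_t)^2\big)$ for two independent Gibbs replicas $\x^{(1)},\x^{(2)}$, the integrand equals $\tfrac12\big(N\langle R_{12}^2\rangle_t-\tfrac1N\sum_i(\langle x_i^2\rangle_t)^2\big)$ with $R_{12}:=\tfrac1N\x^{(1)}\!\cdot\x^{(2)}$, so its pointwise-in-$t$ convergence to $\tfrac12\tfrac t{1-t}$ reduces to
\[
N\,\langle R_{12}^2\rangle_t\ \xrightarrow{\ P\ }\ \frac1{1-t},\qquad
\frac1N\sum_{i=1}^N(\langle x_i^2\rangle_t)^2\ \xrightarrow{\ P\ }\ 1,\qquad t<\lambda_c ;
\]
one then passes to the $t$-integral by dominated convergence, using the bounded support of $P_{\xtt}$ to bound the integrands uniformly in $N$ and in $t$ on compact subsets of $[0,\lambda_c)$.

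The hard part will be the overlap estimate $N\langle R_{12}^2\rangle_t\to(1-t)^{-1}$, valid right up to (but strictly below) $\lambda_c$. The facts recalled in Section~\ref{sxn:lr_rs_rec} only give $R_{12}^2=q^*(t)^2+o(1)=o(1)$, which is far too crude; what is needed is the behaviour of the overlap at the Gaussian scale $1/\sqrt N$, i.e. $\langle(\sqrt N R_{12})^2\rangle_t\to(1-t)^{-1}$, together with concentration of this quantity over the disorder. This is where the rigorous cavity method of Guerra and Talagrand is essential: for $t<\lambda_c$ the Gibbs measure is replica-symmetric (the overlap concentrates at $0$ and finite marginals decouple to $P_{\xtt}$), and a leave-one-coordinate-out computation — in which the cavity field acting on the added coordinate is shown to be asymptotically Gaussian with variance $\approx t$ — produces the self-consistent relation $v(t)=1+t\,v(t)$ for $v(t):=\lim_N\E[N\langle R_{12}^2\rangle_t]$, whose solution is precisely $(1-t)^{-1}$. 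Making this rigorous requires a quantitative replica-symmetry/overlap-concentration estimate below $\lambda_c$ (a second-moment, ``super-concentration'' bound on $\sqrt N R_{12}$, so that $\langle M\rangle_\lambda$ truly converges in probability and not merely in mean), $L^2$ control of the error terms generated by adding a coordinate, and the companion asymptotic-decoupling statement $\tfrac1N\sum_i(\langle x_i^2\rangle_t)^2\to1$. These steps — carried out in Sections~\ref{sxn:asymptotic_decoupling}–\ref{sxn:overlap_convergence} — carry essentially all the technical weight; the It\^o decomposition, the martingale CLT and the Le Cam transfer are by comparison routine.
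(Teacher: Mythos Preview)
Your It\^o/martingale-CLT framing is a genuinely different route from the paper's. The paper never writes $\log L$ as a stochastic integral; instead it differentiates the characteristic function $\phi_N(\lambda)=\E_{\P_\lambda}[e^{\imnb s\log L}]$ in $\lambda$, uses Gaussian integration by parts and Nishimori to obtain the approximate ODE $\phi_N'(\lambda)\approx\frac{\imnb s-s^2}{4}\frac{\lambda}{1-\lambda}\phi_N(\lambda)$, and integrates. Both strategies do ultimately rest on the behavior of $N\langle R^2\rangle_t$ for $t<\lambda_c$, but they differ in two material ways, and in each your proposal has a gap.

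First, you choose to work under $\P_0$, whereas the paper deliberately works under $\P_\lambda$. This matters: the cavity/interpolation machinery of Sections~\ref{sxn:asymptotic_decoupling}--\ref{sxn:overlap_convergence} is written for the \emph{planted} Gibbs measure and leans on the Nishimori identity~\eqref{nishimori_property_symmetric} throughout---most crucially to replace $R_{1,2}$ by $R_{1,*}$ so that the Franz--Parisi bound (Proposition~\ref{fixed_overlap_upper_bound_paramagnetic}) controls the right overlap. Under $\P_0$ there is no spike, Nishimori is unavailable, and establishing overlap concentration for the \emph{null} Gibbs measure all the way up to $\lambda_c$ is a separate problem for general $P_{\xtt}$; the paper explicitly flags this as beyond its scope in the Remark following Proposition~\ref{orthogonality}. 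So your pointer to Sections~\ref{sxn:asymptotic_decoupling}--\ref{sxn:overlap_convergence} for the null estimates is misplaced.

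Second, even if you switch to $\P_\lambda$ via the innovation representation (which you set up correctly), the martingale CLT needs $\langle\bar M\rangle_\lambda\to v$ \emph{in probability}, i.e.\ concentration of $N\langle R_{1,2}^2\rangle_t$ around $(1-t)^{-1}$. The paper's results give $\E[N\langle R_{1,*}^2\rangle_t]\to(1-t)^{-1}$ and $N^2\E\langle R_{1,*}^4\rangle_t=O(1)$, but these do not imply $\mathrm{Var}\big(N\langle R_{1,*}^2\rangle_t\big)\to0$. The characteristic-function method is engineered precisely to avoid this: it needs only the \emph{decoupling} statement $\E\big[(N\langle R_{1,*}^2\rangle-\langle x_N^2x_N^{*2}\rangle)e^{\imnb s\log L}\big]\approx\frac{\lambda}{1-\lambda}\E[e^{\imnb s\log L}]$ (Proposition~\ref{delicate_overlap_convergence}), which follows from the fourth-moment bound without any concentration of $N\langle R_{1,*}^2\rangle$ itself. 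Your route is likely salvageable under $\P_\lambda$, but it would require an additional variance computation (a cavity expansion for $N^2\E[\langle R_{1,*}^2\rangle^2]$, say) that the paper does not carry out.
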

\begin{remark}
The assumption $\sigma=+\infty$ is only for convenience; its removal does not pose any additional technical difficulties. 
When the diagonal is kept, the limiting Gaussian is still of the form $\normal(\pm\mu,2\mu)$, but now  $\mu = \frac{1}{4} \left(-\log(1-\lambda)-\lambda\right)(1+\frac{\kappa}{\sigma^2}) + \frac{\lambda}{2\sigma^2}$, $\kappa = \E_{P_{\xtt}}[X^3]^2$. We refer to Section~\ref{sxn:diagonal_kept} for a discussion of how this adjusted formula would appear in the proof.   
\end{remark}
We point out that a result of this form was originally proved in the case of the Sherrington--Kirkpatrick (SK) model: \cite{aizenman1987some} showed that the log-partition function of this model has Gaussian fluctuations in the ``high temperature" regime (which corresponds to $\lambda$ small enough.) In fact, Theorem~\ref{central_limit_theorem}, if specialized to the Rademacher prior $P_{\xtt} = \half \delta_{+1} + \half\delta_{-1}$, reduces to their result (with $\lambda_c=1$) since the LR $L$ is equal to the partition function of the SK model in that case.

Our result has a parallel in the work of~\cite{johnstone2015testing,onatski2013asymptotic,onatski2014signal},
who focused on sperical priors and studied the likelihood ratio of the joint eigenvalue densities under the spiked covariance model,
showing its asymptotic normality below the spectral threshold.
We also note that similar fluctuation results were recently proved by~\cite{baik2016fluctuations,baik2017fluctuations} for a spherical model where one integrates over the uniform measure on the sphere in the definition of $L$. Their model, due to its integrable nature, is amenable to analysis using tools from random matrix theory.
The authors are thus able to also analyze a ``low temperature" regime (absent from our problem) where the fluctuations are no longer Gaussian  but given by the Tracy-Widom distribution. However, their techniques seem to be restricted to the spherical case.
Closer to our setting is the recent work of~\cite{banerjee2018asymptotic} \citep[see also][]{banerjee2018contiguity} who use a very precisely conditioned second-moment argument to show asymptotic normality of similar log-likelihood ratios. However, this technique (at least in its current state) is not able to achieve the optimal threshold $\lambda_c$.

\subsection{Limits of strong and weak detection}
Consider the problem of deciding whether an array of observations $\Y=\{Y_{ij} : 1\le i < j \le N\}$ is likely to have been generated from $\P_{\lambda}$ for a fixed $\lambda >0$ or from $\P_{0}$. Let us denote by $\bm{H}_0 : \Y \sim \P_{0}$ the null hypothesis and $\bm{H}_{\lambda}: \Y \sim \P_{\lambda}$ the alternative hypothesis.
Two formulations of this problem exist: one would like to construct a sequence of measurable tests $T: \R^{N(N-1)/2} \mapsto \{0,1\}$ that returns ``0" for ${\bm H}_0$ and ``1" for ${\bm H}_{\lambda}$, for which either
\begin{align}\label{strong_detection}
\lim_{N\to \infty} \P_{\lambda}(T(\Y) = 0) \vee \P_{0}(T(\Y) = 1) = 0,
\end{align}
or less stringently, the total mis-classification error, or risk,
\begin{equation}\label{misclassif_error}
\err(T) := \P_{\lambda} (T(\Y) = 0) + \P_{0} (T(\Y) = 1),
\end{equation}
is minimized among all possible tests $T$. 
\paragraph{Strong detection}
Using a second-moment argument (based on the computation of a truncated version of $\E L(\Y;\lambda)^2$), \cite{banks2017information} and \cite{perry2016optimality_annals} showed that $\P_{\lambda}$ and $\P_{0}$ are mutually contiguous when $\lambda < \lambda_0$, where the latter quantity equals $\lambda_c$ for some priors $P_{\xtt}$ while it is suboptimal for others (e.g., the sparse Rademacher case; see further discussion below).
It is easy to see that contiguity implies impossibility of strong detection since, for instance, if $\P_{0}(T(\Y)=1) \to 0$ then $\P_{\lambda}(T(\Y)=0) \to 1$.
Here we show that Theorem~\ref{central_limit_theorem} provides a more powerful approach to contiguity:
\begin{corollary}\label{contiguity}
Assume the prior $P_{\xtt}$ is centered, has unit variance and bounded support. Then for all $\lambda < \lambda_c$, $\P_{\lambda}$ and $\P_{0}$ are mutually contiguous.
\end{corollary}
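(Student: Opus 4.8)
The plan is to read off both directions of contiguity directly from Theorem~\ref{central_limit_theorem} via Le Cam's first lemma. Write $\mu = \mu(\lambda) := \tfrac14\big(-\log(1-\lambda)-\lambda\big)$; since $\lambda_c \le 1$ by Lemma~\ref{spectral_bound} (recall $\E_{P_\xtt}[X^2]=1$ under the unit-variance assumption), we have $0 \le \mu < \infty$ for $\lambda < \lambda_c$. The first bookkeeping step is to note that for every finite $N$ the laws $\P_\lambda$ and $\P_0$ are mutually absolutely continuous, because $\W$ has a strictly positive Gaussian density; hence $L(\Y;\lambda)$ and $L(\Y;\lambda)^{-1}$ are a.s.\ well defined under both measures, with $\E_{\P_0}[L(\Y;\lambda)] = 1$ and $\E_{\P_\lambda}[L(\Y;\lambda)^{-1}] = 1$.

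For $\P_\lambda \triangleleft \P_0$: by Theorem~\ref{central_limit_theorem} and the continuous mapping theorem, under $\P_0$ one has $L(\Y;\lambda) = \exp\!\big(\log L(\Y;\lambda)\big) \rightsquigarrow e^{Z}$, where $Z \sim \normal(-\mu,2\mu)$. The Gaussian moment generating function gives $\E[e^{Z}] = \exp(-\mu + \tfrac12\cdot 2\mu) = 1 = \E_{\P_0}[L(\Y;\lambda)]$. For nonnegative random variables, convergence in distribution together with convergence of the means to the mean of the limit implies uniform integrability, so $\{L(\Y;\lambda)\}_N$ is uniformly integrable under $\P_0$. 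Consequently, for any sequence of events $A_N$ with $\P_0(A_N)\to 0$,
\[
\P_\lambda(A_N) \;=\; \E_{\P_0}\!\big[L(\Y;\lambda)\,\indi_{A_N}\big] \;\longrightarrow\; 0,
\]
which is exactly $\P_\lambda \triangleleft \P_0$. (Equivalently: the distributional limit $e^{Z}$ is unique since Theorem~\ref{central_limit_theorem} gives convergence along the full sequence, and $\E[e^Z]=1$, so Le Cam's first lemma applies.) For $\P_0 \triangleleft \P_\lambda$: apply the same reasoning to the reciprocal $L(\Y;\lambda)^{-1} = \rmd\P_0/\rmd\P_\lambda$. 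By the alternative-side part of Theorem~\ref{central_limit_theorem}, $\log L(\Y;\lambda) \rightsquigarrow \normal(\mu,2\mu)$ under $\P_\lambda$, hence $L(\Y;\lambda)^{-1} \rightsquigarrow e^{-Z'}$ with $-Z' \sim \normal(-\mu,2\mu)$, and again $\E[e^{-Z'}] = 1 = \E_{\P_\lambda}[L(\Y;\lambda)^{-1}]$. Uniform integrability of $\{L(\Y;\lambda)^{-1}\}_N$ under $\P_\lambda$ follows as before, and then $\P_\lambda(A_N)\to 0$ forces $\P_0(A_N) = \E_{\P_\lambda}[L(\Y;\lambda)^{-1}\indi_{A_N}] \to 0$.

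There is essentially no obstacle: the corollary is a routine consequence of Theorem~\ref{central_limit_theorem}, which already does all the work (this is precisely why it gives a sharper contiguity region than the conditional second-moment method). The only items worth flagging are the finite-$N$ mutual absolute continuity, and the fact that the limiting Gaussian has variance exactly twice the absolute value of its mean --- which is exactly what makes $\E[e^{Z}]=1$; given $\E_{\P_0}[L]=1$ for all $N$, the inequality $\E[e^Z]\le 1$ is automatic (Skorokhod plus Fatou), so this identity is equivalent to no mass escaping to infinity in the limit, i.e.\ to contiguity itself.
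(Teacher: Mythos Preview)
Your proof is correct and follows essentially the same route as the paper: both derive mutual contiguity from Theorem~\ref{central_limit_theorem} via Le Cam's first lemma, using that the limiting log-normal $e^{\normal(-\mu,2\mu)}$ has mean $1$. The only cosmetic difference is that you unpack the uniform-integrability argument explicitly and treat the reverse direction by applying the same reasoning to $L^{-1}$ under $\P_\lambda$, whereas the paper handles both directions at once by noting $\E U=1$ and $\Pr(U>0)=1$ and citing the two equivalent formulations of Le Cam's lemma.
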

\begin{proof}
A consequence of Theorem~\ref{central_limit_theorem} is that if
$\frac{\rmd \P_{\lambda}}{ \rmd \P_{0}} ~{\rightsquigarrow}~ U$
under $\P_{0}$ along some subsequence and for some random variable $U$, then by the continuous mapping theorem we necessarily have
$U = \exp \normal(-\mu,2\mu)$,
where $\mu = \frac{1}{4} \left(-\log(1-\lambda)-\lambda\right)$.
We have $\Pr(U>0) = 1$, and $\E U  = 1$. We now conclude using Le Cam's first lemma in both directions~\citep[Lemma 6.4 or Example 6.5,][]{vandervaart2000asymptotic}.
\end{proof}
This approach allows one to circumvent second-moment computations which are not guaranteed to be tight in general, and necessitate careful and prior-specific conditioning that truncates away problematic atypical events. On the other hand, we show that strong detection is possible above $\lambda_c$ (the proof is at the end of the paper): 
\begin{proposition}\label{orthogonality}
Let $\lambda > \lambda_c$. If $\Y \sim \P_{\lambda}$, then $\frac{1}{N} \log L(\Y;\lambda) >0$ with probability approaching one as $N\to +\infty$. On the other hand, if  $\Y \sim \P_{0}$ then $ \frac{1}{N} \log L(\Y;\lambda) \le 0$ with probability approaching one as $N \to +\infty$.
Therefore $\P_{\lambda}$ and $\P_{0}$ are mutually orthogonal above $\lambda_c$.
\end{proposition}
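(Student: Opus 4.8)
The plan is to read off the two claims directly from the known convergence of the free energy $f_N \to \phi_{\RS}(\lambda)$ in \eqref{convergence_free_energy}, together with the characterization \eqref{lambda_critical} of $\lambda_c$ as the threshold beyond which $\phi_{\RS}(\lambda)>0$, and the concentration of $\frac1N\log L$ around its mean. Fix $\lambda>\lambda_c$. By definition of $\lambda_c$ and monotonicity of $q^*$, we have $\phi_{\RS}(\lambda) =: c > 0$.

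\emph{Alternative side.} Under $\Y\sim\P_\lambda$ we have $\E_{\P_\lambda}\frac1N\log L(\Y;\lambda) = f_N \to c > 0$. The first step is to recall the standard concentration-of-measure fact quoted at the start of Section~\ref{sxn:fluctuations_wigner}: $\log L(\Y;\lambda)$ concentrates about its expectation with fluctuations $O(\sqrt N)$ under $\P_\lambda$ (this follows from Gaussian concentration applied to the map $\W \mapsto \log L$, whose Lipschitz constant with respect to the Frobenius norm of $\W$ is $O(\sqrt N)$ since the spike has bounded entries; a short bounded-differences / Gaussian-Poincaré argument suffices). Hence $\frac1N\log L(\Y;\lambda) \to c > 0$ in probability, so $\P_\lambda\big(\frac1N\log L(\Y;\lambda) > 0\big) \to 1$.

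\emph{Null side.} Under $\Y\sim\P_0$ the entries of $\Y$ are i.i.d.\ Gaussian, and the same Gaussian concentration bound gives that $\frac1N\log L(\Y;\lambda)$ is within $o(1)$ of $\frac1N \E_{\P_0}\log L(\Y;\lambda)$ with probability $\to 1$. The remaining point is to show $\frac1N \E_{\P_0}\log L(\Y;\lambda) \le 0$ (or at least $\le o(1)$), which is immediate from Jensen's inequality: $\E_{\P_0}\log L(\Y;\lambda) \le \log \E_{\P_0} L(\Y;\lambda) = \log 1 = 0$, since $L$ is a likelihood ratio and therefore has $\P_0$-expectation exactly $1$. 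Combining, $\P_0\big(\frac1N\log L(\Y;\lambda) \le o(1)\big) \to 1$; absorbing the $o(1)$ (or arguing with $\le \varepsilon$ for every $\varepsilon>0$) gives $\P_0\big(\frac1N\log L(\Y;\lambda) \le 0\big)\to 1$ after a harmless relaxation, or one states the bound as "$\le o(1)$" — either way the conclusion that the two laws are mutually orthogonal is unaffected.

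\emph{Orthogonality.} Put $T(\Y) = \indi\{\frac1N\log L(\Y;\lambda) > 0\}$ (adjusting the threshold to $\varepsilon$ if needed). The two displays above say $\P_\lambda(T=1)\to 1$ and $\P_0(T=1)\to 0$, i.e.\ $\err(T) \to 0$, which is precisely mutual orthogonality (asymptotic singularity) of $\P_\lambda$ and $\P_0$. The only mildly delicate point is the null-side concentration, since $\log L$ is a log-integral of an exponential in the Gaussian matrix $\Y$; but because the spike support is bounded, the exponent is a sub-quadratic function of $\Y$ with controlled gradient on the relevant range, and the Gaussian concentration inequality applies after the same Lipschitz estimate used on the alternative side — this is the one step that requires a little care rather than being purely formal.
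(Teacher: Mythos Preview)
Your argument is correct and matches the paper's own proof essentially line for line: Jensen's inequality for $\E_{\P_0}\log L\le 0$, the positivity $\phi_{\RS}(\lambda)>0$ for $\lambda>\lambda_c$, and concentration of $\frac1N\log L$ around its mean under both measures. The only cosmetic differences are that the paper invokes its own Guerra lower bound (Proposition~\ref{lower_bound_f}) for $\underline{\lim}\,f_N\ge\phi_{\RS}(\lambda)$ rather than the full convergence~\eqref{convergence_free_energy}, and cites its concrete concentration lemmas (Lemmas~\ref{sub_gaussian_concentration_disorder} and~\ref{sub_gaussian_concentration_planted_vector}, which separate the Gaussian disorder $\W$ from the planted vector $\x^*$) instead of a generic Lipschitz/Poincar\'e argument; your remark that the null-side ``$\le 0$'' should really be read as ``$\le\epsilon$ for every $\epsilon>0$'' is apt and is in fact acknowledged by the paper in the Remark following the proposition.
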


\begin{remark}It is tempting to believe that $ \overline{\lim}\frac{1}{N} \E_{\P_{0}}\log L(\Y;\lambda) < 0$ above $\lambda_c$ (the high-probability statement is then a consequence of concentration), but we do not know of a simple proof of this. One can show, following~\cite{guerra2003broken}, that there is a non-increasing sequence of thresholds $(\lambda_k)_{k\ge1}$---each one corresponding to the point where the so-called ``$k$-$\RSB$" interpolation bound dips below zero---such that the above limit is strictly negative above $\lambda_{\infty} = \lim \lambda_k$. By our contiguity argument, it is necessarily true that $\lambda_{\infty} \ge \lambda_c$. Equality would follow if one can show overlap convergence (the analogue of Theorem~\ref{convergence_fourth_moment_symmetric_paramagnetic} with $R_{1,2}$ replacing $R_{1,*}$) for all $\lambda < \lambda_{\infty}$ under the null model $\P_{0}$, but this goes beyond the scope of this paper.
\end{remark}

 We note that in the case of the sparse Rademacher prior, $P_{\xtt} = \frac{\rho}{2}\delta_{-1/\sqrt{\rho}} +   (1-\rho)\delta_{0} + \frac{\rho}{2}\delta_{+1/\sqrt{\rho}}$, we have $\lambda_c=1$ if $\rho \ge \rho^* \approx 0.092$ and $\lambda_c <1$ otherwise. Corollary~\ref{contiguity} and Proposition~\ref{orthogonality} exactly pin down the regime of contiguity, thus closing the gaps in the results of~\cite{banks2017information} and~\cite{perry2016optimality_annals}. 

\paragraph{Weak detection} We have seen that strong detection is possible if and only if $\lambda > \lambda_c$. It is then natural to ask whether weak detection is possible below $\lambda_c$; i.e., is it possible to test with accuracy \emph{better than that of a random guess} below the reconstruction threshold? The answer is \emph{yes}, and this is another consequence of Theorem~\ref{central_limit_theorem}. More precisely, the optimal test minimizing the risk~\eqref{misclassif_error} is the likelihood ratio test which rejects the null hypothesis $\bm{H}_0$ (i.e., returns ``1") if $L(\mtx{Y};\lambda) >1$, and its error is
\begin{equation}\label{optimal_error}
\err^*(\lambda) = \P_{\lambda} (L(\mtx{Y};\lambda) \le 1) + \P_{0} (L(\mtx{Y};\lambda) >1) = 1 - D_{\TV}(\P_\lambda,\P_0).
\end{equation}
One can readily deduce from Theorem~\ref{central_limit_theorem} the Type-I and Type-II errors of the likelihood ratio test. By symmetry of the means of the limiting Gaussians, the errors $\P_{0}(\log L(\mtx{Y}; \lambda) > 0) $ and $\P_{\lambda}(\log L(\mtx{Y}; \lambda) \le 0)$ converge to a common limit 
$\half\erfc\left(\frac{\sqrt{\mu}}{2}\right)$ 
for all $\lambda < \lambda_c$, where $\mu = \frac{1}{4} \left(-\log(1-\lambda)-\lambda\right)$ and $\erfc(x) = \frac{2}{\sqrt{\pi}} \int_{x}^\infty e^{-t^2}\rmd t$ is the complementary error function.
Therefore, one obtains the following formula for $\err^*(\lambda)$ and the total variation distance between $\P_{\lambda}$ and $\P_{0}$ (ploted in Figure~\ref{minimal_error_KL_TV}):
\begin{corollary}
For all $\lambda <\lambda_c$ (and $\sigma=+\infty$), we have
\begin{equation}\label{eq:total_variation}
\lim_{N\to \infty} \err^*(\lambda) = 1-\lim_{N\to \infty} D_{\TV}(\P_{\lambda},\P_{0}) = \erfc\left(\frac{1}{4}\sqrt{-\log(1-\lambda)-\lambda}\right).
\end{equation}
\end{corollary}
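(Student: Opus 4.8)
The plan is to deduce the corollary directly from Theorem~\ref{central_limit_theorem}, since $\err^*(\lambda)$ is by construction the risk of the likelihood ratio test. First I would record the exact identity $\err^*(\lambda) = 1 - D_{\TV}(\P_{\lambda},\P_{0})$, which holds at every finite $N$: for any measurable test $T$ the risk~\eqref{misclassif_error} equals $1 + \int T\,(\rmd\P_{0} - \rmd\P_{\lambda})$, which is minimized by the test rejecting $\bm{H}_0$ exactly when $L(\Y;\lambda) > 1$ (its behaviour on $\{L = 1\}$ is immaterial, as $\P_{\lambda}$ and $\P_{0}$ coincide there), with minimal value $\int \min(\rmd\P_{\lambda},\rmd\P_{0}) = 1 - D_{\TV}(\P_{\lambda},\P_{0})$. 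This is the classical Neyman--Pearson/Scheff\'e computation, and it also identifies $\err^*(\lambda)$ with the risk of the likelihood ratio test of~\eqref{optimal_error}, namely the sum of the Type-I error $\P_{0}(\log L(\Y;\lambda) > 0)$ and the Type-II error $\P_{\lambda}(\log L(\Y;\lambda) \le 0)$.

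Next I would pass to the limit using Theorem~\ref{central_limit_theorem}. Set $\mu := \frac14(-\log(1-\lambda) - \lambda)$; since $\lambda_c \le 1$ by Lemma~\ref{spectral_bound} and $u \mapsto -\log(1-u) - u$ is strictly increasing on $[0,1)$ and vanishes at $0$, we have $\mu > 0$ for $\lambda \in (0,\lambda_c)$ (the case $\lambda = 0$ being trivial, both sides equalling $\erfc(0) = 1$). The theorem gives $\log L(\Y;\lambda) \rightsquigarrow \normal(-\mu,2\mu)$ under $\P_{0}$ and $\log L(\Y;\lambda) \rightsquigarrow \normal(+\mu,2\mu)$ under $\P_{\lambda}$. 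As the limiting distribution functions are continuous, $0$ is a continuity point of each, so the Type-I error converges to $\P(\normal(-\mu,2\mu) > 0) = \P(\normal(0,1) > \sqrt{\mu/2}) = \frac12\erfc(\sqrt{\mu}/2)$, and by the symmetry of the two limiting means about $0$ the Type-II error converges to the same value. Adding the two contributions and substituting $\sqrt{\mu}/2 = \frac14\sqrt{-\log(1-\lambda)-\lambda}$ gives $\lim_{N\to\infty}\err^*(\lambda) = \erfc(\frac14\sqrt{-\log(1-\lambda)-\lambda})$, which, combined with the identity above, is exactly~\eqref{eq:total_variation}.

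I do not expect a genuine obstacle: this is a direct corollary of Theorem~\ref{central_limit_theorem}, and the substantive work is entirely upstream in that theorem. The only points that need any care are (i) identifying the risk-minimizing test together with the identity $\err^* = 1 - D_{\TV}$, which is textbook, and (ii) transferring weak convergence to convergence of the tail probability at the single threshold $0$, which is automatic because the Gaussian distribution function is continuous everywhere. The translation of the standard Gaussian tail $\P(\normal(0,1) > a) = \frac12\erfc(a/\sqrt{2})$ into the stated $\erfc$ form is a one-line change of variables ($u = t\sqrt{2}$) that I would not spell out.
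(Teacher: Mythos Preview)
Your proposal is correct and follows essentially the same approach as the paper: the identity $\err^*(\lambda)=1-D_{\TV}(\P_\lambda,\P_0)$ is recorded, and then Theorem~\ref{central_limit_theorem} is applied to conclude that the Type-I and Type-II errors each converge to $\tfrac12\erfc(\sqrt{\mu}/2)$, whose sum gives~\eqref{eq:total_variation}. Your write-up is in fact slightly more careful than the paper's, since you make explicit why the tail probabilities at the threshold $0$ converge (continuity of the Gaussian cdf) and why $\mu>0$.
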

\begin{figure}
\centering
\includegraphics[width=7.5cm]{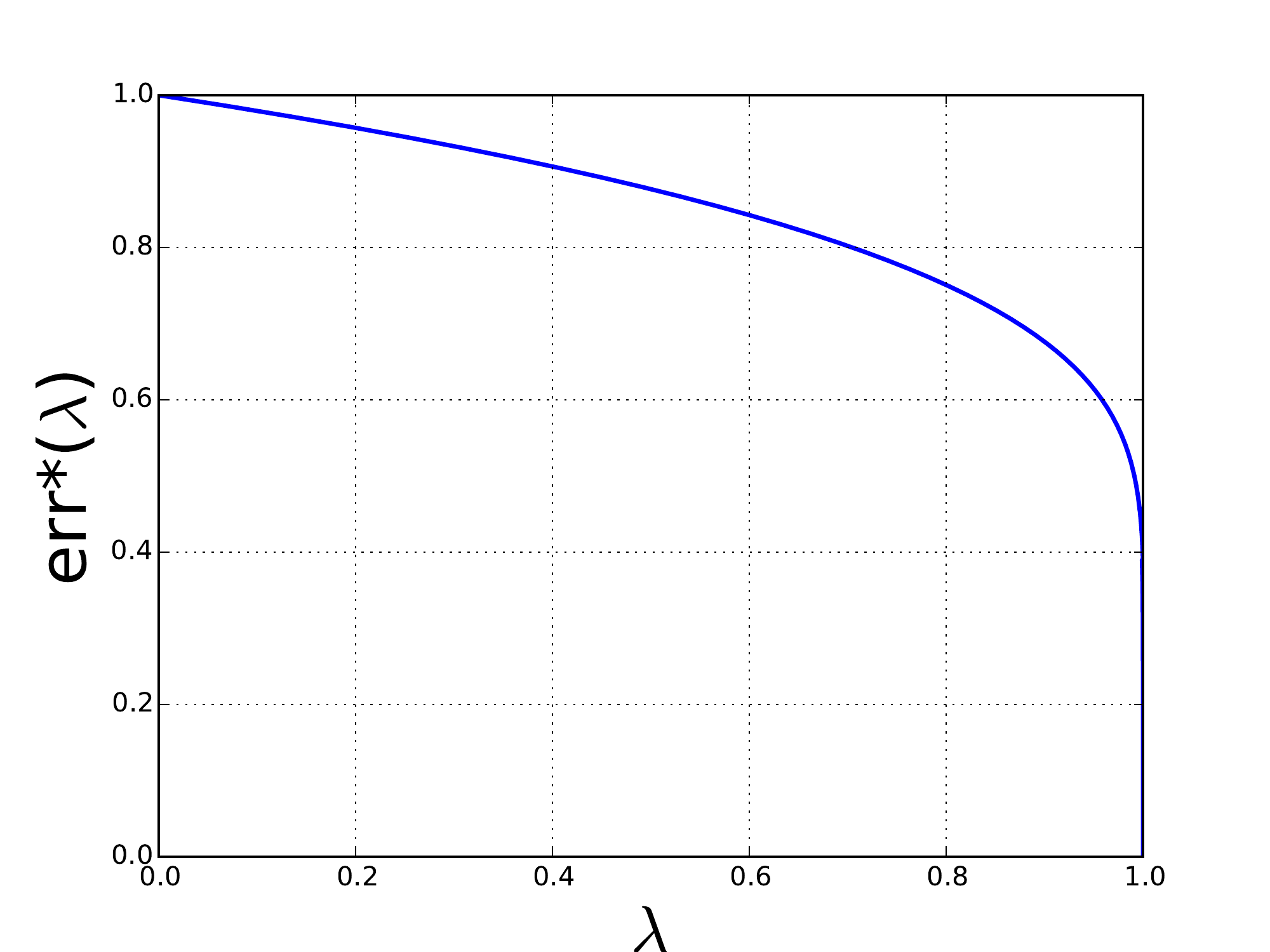}
\includegraphics[width=7.5cm]{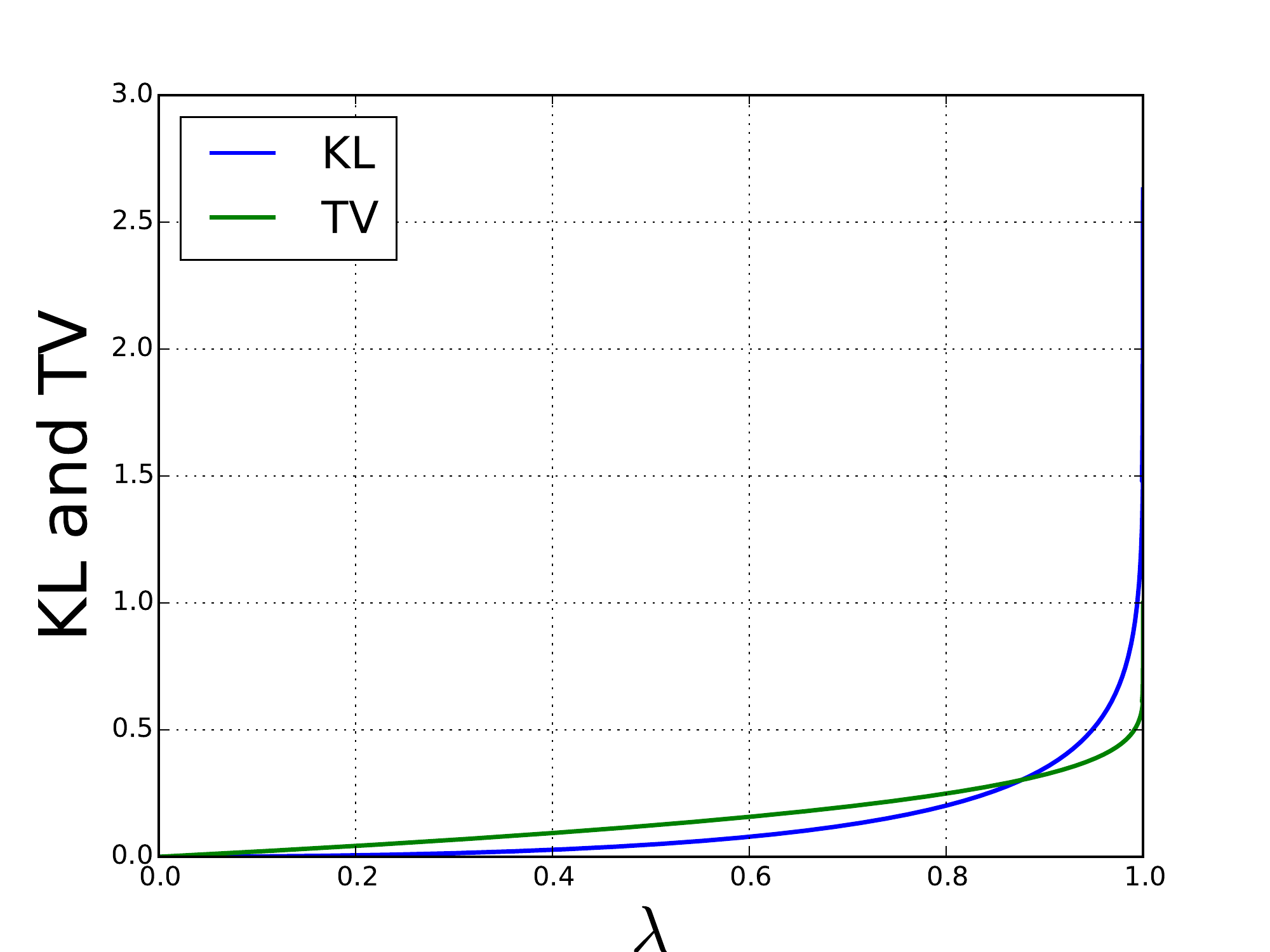}
\caption{Plots of formulas~\eqref{eq:total_variation} and~\eqref{eq:kul_divergence}.}
\protect\label{minimal_error_KL_TV}
\end{figure}
Moreover, the proof of Theorem~\ref{central_limit_theorem} allows us to obtain a formula for the $\KL$ divergence between $\P_{\lambda}$ and $\P_{0}$ below the reconstruction threshold $\lambda_c$ (see Figure~\ref{minimal_error_KL_TV}):
\begin{corollary}[of the proof]\label{cor_kul_divergence}
Assume the prior $P_{\xtt}$ is centered, is of unit variance and has bounded support (and $\sigma=+\infty$.) Then for all $\lambda <\lambda_c$,
\begin{equation}\label{eq:kul_divergence}
\lim_{N\to \infty} D_{\KL}(\P_\lambda,\P_0) = \frac{1}{4}\left(-\log\left(1-\lambda\right) - \lambda \right).
\end{equation}
\end{corollary}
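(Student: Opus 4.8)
The plan is to obtain this from the proof of Theorem~\ref{central_limit_theorem} rather than from its statement, which is why it is labelled a corollary \emph{of the proof}. The starting point is the identity $D_{\KL}(\P_\lambda,\P_0) = \E_{\P_\lambda}[\log L(\Y;\lambda)] = N f_N$ from~\eqref{free_energy}, so the assertion is exactly that $\E_{\P_\lambda}[\log L(\Y;\lambda)] \to \mu$, where $\mu := \tfrac14(-\log(1-\lambda)-\lambda)$. Theorem~\ref{central_limit_theorem} already gives $\log L(\Y;\lambda) \rightsquigarrow \normal(\mu,2\mu)$ under $\P_\lambda$, so what remains is to upgrade convergence in distribution to convergence of the first moment. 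By the standard Vitali-type criterion this holds as soon as the family $\{\log L(\Y;\lambda)\}_N$ is uniformly integrable under $\P_\lambda$, for instance once $\sup_N \E_{\P_\lambda}[(\log L(\Y;\lambda))^2] < \infty$.

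I expect this $L^2$ bound (or an $L^{1+\varepsilon}$ bound, which would suffice) to be a by-product of the argument proving Theorem~\ref{central_limit_theorem}: a Gaussian-interpolation / cavity analysis of $\log L$ that delivers asymptotic normality typically also controls its Laplace transform $\E_{\P_\lambda}[e^{s\log L(\Y;\lambda)}]$ for $s$ in a neighbourhood of the origin, hence all of its moments, uniformly in $N$. The hypothesis $\lambda < \lambda_c$ enters crucially precisely at this point --- above $\lambda_c$ the statement fails, $\log L$ being of order $N$ --- and it enters through the overlap-convergence estimates of Section~\ref{sxn:overlap_convergence}. Note that one cannot shortcut the uniform integrability via the second moment of $L$ itself: $\E_{\P_0}[L^2] = \E_{\P_\lambda}[L]$ need not stay bounded on the window where the spectral/second-moment threshold lies strictly below $\lambda_c$ (this is exactly why the conditioned second-moment arguments of~\cite{banks2017information} and~\cite{perry2016optimality_annals} are suboptimal there), whereas $\E_{\P_\lambda}[(\log L)^2]$ does stay bounded because the atypical configurations that inflate $\E_{\P_0}[L^2]$ carry too little $\P_\lambda$-mass to survive against a merely quadratic weight.

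An alternative, more self-contained route --- which also clarifies why the same quantity reappears as the variance in Theorem~\ref{central_limit_theorem} --- is to differentiate in the signal-to-noise ratio. Conditioning on $\x^*$ gives the exact identity $D_{\KL}(\P_\lambda,\P_0) = \tfrac{\lambda(N-1)}{4} - I(\x^*;\Y)$ (for $\sigma=+\infty$ and a unit-variance prior), and an I-MMSE computation (Gaussian integration by parts, then the Nishimori identity) yields
\[
\frac{\rmd}{\rmd\lambda}\, D_{\KL}(\P_\lambda,\P_0) \;=\; \frac14\Big( N\,\E_{\P_\lambda}\langle R_{1,*}^2\rangle \;-\; 1 \Big) + o(1),
\]
where $R_{1,*} = \tfrac1N\langle \x, \x^*\rangle$ and the $+o(1)$ absorbs a term that converges to $(\E_{P_{\xtt}}[X^2])^2 = 1$ by convergence of the single-site posterior marginals to $P_{\xtt}$ below $\lambda_c$. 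Since $D_{\KL}(\P_\lambda,\P_0) = 0$ at $\lambda=0$, integrating reduces everything to the sharp second-order overlap asymptotics $N\,\E_{\P_\lambda}\langle R_{1,*}^2\rangle \to \tfrac1{1-\lambda}$ for $\lambda<\lambda_c$ --- the quantitative refinement of the overlap-convergence results behind Theorem~\ref{central_limit_theorem}, in the spirit of Theorem~\ref{convergence_fourth_moment_symmetric_paramagnetic} --- whereupon $D_{\KL}(\P_\lambda,\P_0) \to \tfrac14\int_0^\lambda\big(\tfrac1{1-s}-1\big)\,\rmd s = \tfrac14\big(-\log(1-\lambda)-\lambda\big) = \mu$.

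On either route the genuine obstacle is the same: establishing the \emph{constant-order} control --- a uniform-in-$N$ $L^2$ bound on $\log L$, equivalently the exact $\Theta(1/N)$ asymptotics of $\E_{\P_\lambda}\langle R_{1,*}^2\rangle$ with the precise constant $\tfrac1{1-\lambda}$ --- for \emph{all} $\lambda<\lambda_c$, and with enough uniformity on compact subintervals to interchange the limit with the $\lambda$-integral. This is exactly the content of the cavity / Guerra--Talagrand estimates worked out in the body of the paper, and is the reason the corollary rides on the proof of Theorem~\ref{central_limit_theorem} rather than following formally from its statement.
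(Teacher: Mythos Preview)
Your second route --- differentiate in $\lambda$, reduce to the sharp overlap asymptotics $N\,\E_{\P_\lambda}\langle R_{1,*}^2\rangle\to(1-\lambda)^{-1}$, and integrate --- is exactly what the paper does. The paper's proof is simply to set $s=0$ in the computation behind Lemma~\ref{lemma_derivative_f}, obtaining $\frac{\rmd}{\rmd\lambda}\E_{\P_\lambda}\log L = \frac{1}{4}\E\big[N\langle R_{1,*}^2\rangle - \langle x_N^2x_N^{*2}\rangle\big]$ after the Nishimori simplification, then to apply Proposition~\ref{delicate_overlap_convergence} at $s=0$ (where the decoupling is trivial since $e^{\imnb s\log L}\equiv1$) to get $\frac{1}{4}\cdot\frac{\lambda}{1-\lambda}+O(N^{-1/2})$, and integrate. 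Your I-MMSE framing and the mutual-information identity are cosmetic repackagings of the same computation; the key input is precisely the $s=0$ case of Proposition~\ref{delicate_overlap_convergence} together with~\eqref{second_fundamental_bound}.

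Your first route, via uniform integrability from a Laplace-transform bound, is \emph{not} what the paper does and has a gap as written. The proof of Theorem~\ref{central_limit_theorem} controls the \emph{characteristic} function $\E[e^{\imnb s\log L}]$, and the boundedness $|e^{\imnb s\log L}|=1$ is used repeatedly in the cavity estimates of Section~\ref{sxn:asymptotic_decoupling} (for instance in bounding the terms~\eqref{second_derivative_order_three}). Replacing $\imnb s$ by a real exponent would cost exactly this boundedness, so an $L^{1+\varepsilon}$ bound on $\log L$ is not an automatic by-product of the argument. The differentiation route sidesteps this issue entirely, which is why the paper takes it.
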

Note that the above formulas are only valid up to $\lambda_c$.
When $\lambda_c <1$, $\TV$ and $\KL$ both witness an abrupt discontinuity at $\lambda_c$ to 1 and $\infty$ respectively.
When $\lambda_c=1$, then the behavior is more smooth with an asymptote at $1$.

\section{Replicas, overlaps, Gibbs measures and Nishimori}
\label{sxn:replicas_nishimori}
\subsection{Important notions} A crucial component of the proof of our main results is the understanding of the convergence of the overlap $\x^\top\x^*/N$, where $\x$ is drawn from $\P_{\lambda}(\cdot|\Y)$, to its limit $q^*(\lambda)$. By Bayes' rule, we see that
\begin{equation}\label{posterior}
\rmd\P_{\lambda} (\x| \Y) = \frac{ e^{- H(\x)} \rmd P_{\xtt}^{\otimes N}(\x)}{\int  e^{- H(\x)}  \rmd P_{\xtt}^{\otimes N}(\x)},
\end{equation}
where $H$ is the Hamiltonian (recall that $\sigma=+\infty$)
\begin{equation}\label{hamiltonian}
-H(\x) := \sum_{i < j} \sqrt{\frac{\lambda}{N}} Y_{ij}x_ix_j  -\frac{\lambda}{2N} x_i^2x_j^2.
\end{equation}
From the equations~\eqref{likelihood_ratio_explicit} and~\eqref{free_energy}, it is straightforward to see that
\[f_N = \frac{1}{N} \E_{\P_{\lambda}} \log~ \int e^{- H(\x)} \rmd P_{\xtt}^{\otimes N}(\x),\]
This provides another way of interpreting $f_N$ as the expected log-partition function (or normalizing constant) of the posterior $\P_{\lambda}(\cdot | \Y)$.
For an integer $n\ge 1$ and $f : (\R^{N})^{n+1} \mapsto \R$, we define the Gibbs average of $f$ w.r.t.\ $H$ as
 \begin{equation}\label{gibbs_average_one}
 \left\langle f(\x^{(1)},\cdots,\x^{(n)},\x^*)\right\rangle:= \frac{\int f(\x^{(1)},\cdots,\x^{(n)},\x^*) \prod_{l=1}^n e^{- H(\x^{(l)})} \rmd P_{\xtt}^{\otimes N}(\x^{(l)})}{\big(\int e^{- H(\x)}  \rmd P_{\xtt}^{\otimes N}(\x)\big)^n}.
 \end{equation}
This is simply the average of $f$ with respect to $\P_{\lambda}(\cdot | \Y)^{\otimes n}$.
 The variables $\x^{(l)}, l=1\cdots,n$, are called \emph{replicas}, and are interpreted as random variables drawn independently from the posterior. When $n=1$ we simply write $f(\x,\x^*)$ instead of $f(\x^{(1)},\x^*)$.  Throughout this paper, we use the following notation: for $l,l'=1,\cdots,n,*$, we let
 \[R_{l,l'} := \x^{(l)} \cdot \x^{(l')} = \frac{1}{N} \sum_{i=1}^N x_i^{(l)}x_i^{(l')}.\]

\subsection{The Nishimori property under $\P_{\lambda}$}
The fact that the Gibbs measure $\langle \cdot \rangle$ is a posterior distribution~\eqref{posterior} has far-reaching consequences. A crucial implication is that the $n+1$-tuples $(\x^{(1)},\cdots,\x^{(n+1)})$ and $(\x^{(1)},\cdots,\x^{(n)},\x^*)$ have the same law under $\E_{\P_{\lambda}}\langle \cdot \rangle$.
To see this, let us perform the following experiment:
\begin{enumerate}
\item Construct $\x^*\in \R^N$ by independently drawing its coordinates from $P_{\xtt}$.
\item Construct $\Y$ as $Y_{ij} = \sqrt{\frac{\lambda}{N}}x_i^*x_j^* + W_{ij}$, where $W_{ij} \sim \normal(0,1)$ are all independent for $i < j$. (Therefore, $\Y$ is distributed according to $\P_{\lambda}$.)
\item Draw $n+1$ independent random vectors $(\x^{(l)})_{l=1}^{n+1}$ from $\P_{\lambda}(\x \in \cdot |\Y)$.
\end{enumerate}
By the tower property of expectations~\citep[for a three-line proof, see Proposition 16,][]{lelarge2016fundamental}, the following equality of joint laws holds
\begin{equation}\label{nishimori_property_symmetric}
\left(\Y,\x^{(1)},\cdots,\x^{(n)},\x^{(n+1)}\right) \overset{\textup{d}}{=}  \left(\Y,\x^{(1)},\cdots,\x^{(n)},\x^{*}\right).
\end{equation}
This implies in particular that under the alternative $\P_{\lambda}$, the overlaps $R_{1,*}$ between a replica and the spike have the same distribution as the overlap $R_{1,2}$ between two replicas.
The latter is a very important property of the planted model $\P_{\lambda}$, which is usually named after~\cite{nishimori2001statistical} in spin-glass theory.
Property~\eqref{nishimori_property_symmetric} substantially simplifies important technical arguments that are otherwise very difficult to conduct under the null.
A recurring example in our context is the following: to prove the convergence of the overlap between two replicas, $\E\langle R_{1,2}^2\rangle  \to 0$, it suffices to prove $\E\langle R_{1,*}^2\rangle  \to 0$ since the two quantities are equal.

\section{Proof of LR fluctuations}
\label{sxn:proof_fluctuations}
In this section we prove Theorem~\ref{central_limit_theorem}.
It suffices to prove the fluctuations under one of the hypotheses. Fluctuations under the remaining one come for free as a consequence of Le Cam's third lemma \citep[][Theorem 6.6]{vandervaart2000asymptotic}.
We choose to treat the planted case $\Y \sim \P_{\lambda}$. The reason is that it is easier to deal with the planted model, due to the Nishimori property~\eqref{nishimori_property_symmetric}.

\subsection{Fluctuations under $\P_{\lambda}$}
In this section we prove Gaussian fluctuations of $\log L$ through the convergence of its characteristic function.
Let $\imnb^2=-1$ and $s \in \R$ be fixed. For $\lambda$ and $\Y \sim \P_{\lambda}$, let
\[\phi_N(\lambda) = \E_{\P_{\lambda}}\left[e^{\imnb s \log L(\Y;\lambda)}\right].\]

\begin{theorem}\label{characteristic_function_convergence}
For all $\lambda<\lambda_c$ and $s\in\R$, there exists a constant $K = K(\lambda,s) <\infty$ such that
\[\abs{\phi_N(\lambda) - e^{(\imnb s -s^2)\mu}}\le \frac{K}{\sqrt{N}},\]
where $\mu = \frac{1}{4}(-\log (1-\lambda) - \lambda)$.
\end{theorem}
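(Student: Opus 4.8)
\emph{The plan.} I would prove Theorem~\ref{characteristic_function_convergence} by deriving a single approximate linear ODE in $\lambda$ for $\phi_N$ and integrating it, the source term of the ODE being controlled by sharp overlap estimates. Fix $s$. I realize the interpolation along the model family itself: draw the coordinates of $\x^*$ i.i.d.\ from $P_{\xtt}$ and an array $\W$ of i.i.d.\ $\normal(0,1)$'s \emph{once}, set $Y^{(t)}_{ij}:=\sqrt{t/N}\,x_i^*x_j^*+W_{ij}$ for $t\in[0,\lambda]$, and put $\Phi_N(t):=\log L(\Y^{(t)};t)$. Then $\Y^{(t)}\sim\P_t$, the underlying randomness no longer depends on $t$, and $\phi_N(t)=\E\big[e^{\imnb s\Phi_N(t)}\big]$ is a genuine $C^1$ function with $\phi_N(0)=1=e^{(\imnb s-s^2)\mu(0)}$. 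With $\mu(t)=\frac14(-\log(1-t)-t)$, hence $\mu'(t)=\frac{t}{4(1-t)}$, the target is the estimate
\[
\Big|\phi_N'(t)-(\imnb s-s^2)\,\mu'(t)\,\phi_N(t)\Big|\;\le\;\frac{C(\lambda,s)}{\sqrt N}\qquad(0\le t\le\lambda),
\]
from which the claim follows by solving the linear equation (last paragraph).

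\emph{Step 1: differentiate and reduce to an overlap.} Differentiating, $\phi_N'(t)=\imnb s\,\E\big[e^{\imnb s\Phi_N(t)}\langle\partial_t(-H_t)\rangle_t\big]$ with
\[
\partial_t(-H_t)=\frac1N\sum_{i<j}x_i^*x_j^*x_ix_j+\frac{1}{2\sqrt{tN}}\sum_{i<j}W_{ij}x_ix_j-\frac{1}{2N}\sum_{i<j}x_i^2x_j^2 .
\]
To each $W_{ij}$ I apply Gaussian integration by parts; $W_{ij}$ enters both $e^{\imnb s\Phi_N}$ (through $\Phi_N$) and the Gibbs bracket (through the measure and the observable $x_ix_j$). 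Differentiating the bracket produces the ``Onsager/It\^o'' self-term $\langle x_i^2x_j^2\rangle$, which cancels the last sum above; differentiating $e^{\imnb s\Phi_N}$ contributes the factor $\imnb s\langle x_ix_j\rangle$; and the prefactor $\frac1{2\sqrt{tN}}\cdot\sqrt{t/N}$ collapses to $\frac1{2N}$. For the first sum I invoke the Nishimori identity~\eqref{nishimori_property_symmetric} with $n=1$ (so $(\Y,\x^{(1)},\x^*)\overset{\textup d}{=}(\Y,\x^{(1)},\x^{(2)})$, while $e^{\imnb s\Phi_N}$ and $\langle x_ix_j\rangle$ are functions of $\Y$ alone) to replace $x_i^*x_j^*$ by a second replica $x_i^{(2)}x_j^{(2)}$, so that $x_i^*x_j^*\langle x_ix_j\rangle$ becomes $\langle x_ix_j\rangle^2$. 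Collecting coefficients ($1$ from the spike term, $\tfrac{\imnb s-1}{2}$ from the noise term) yields the exact identity
\[
\phi_N'(t)=\frac{\imnb s-s^2}{2}\cdot\frac1N\sum_{i<j}\E\!\big[e^{\imnb s\Phi_N(t)}\,\langle x_ix_j\rangle_t^2\big].
\]
Since $|e^{\imnb s\Phi_N}|=1$, this gives $\big|\phi_N'(t)-(\imnb s-s^2)\mu'(t)\phi_N(t)\big|\le\frac{|\imnb s-s^2|}{2}\,\E\big|\frac1N\sum_{i<j}\langle x_ix_j\rangle_t^2-2\mu'(t)\big|$, so everything reduces to one sharp estimate.

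\emph{Step 2: the sharp overlap estimate (the crux).} I must show, for $t<\lambda_c$ and with $C$ bounded on $[0,\lambda]$,
\[
\E\Big|\frac1N\sum_{i<j}\langle x_ix_j\rangle_t^2-\frac{t}{2(1-t)}\Big|\;\le\;\frac{C(t)}{\sqrt N}.
\]
Writing $\frac1N\sum_{i<j}\langle x_ix_j\rangle^2=\frac N2\langle R_{1,2}^2\rangle-\frac1{2N}\sum_i\langle x_i^2\rangle^2$, this amounts to $N\langle R_{1,2}^2\rangle\to\frac1{1-t}$ with a rate, to $\frac1N\sum_i\langle x_i^2\rangle^2\to1$, and to concentration of both. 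This is genuinely \emph{stronger} than bare overlap convergence $\E\langle R_{1,2}^2\rangle\to0$ (which only gives $\sum_{i<j}\langle x_ix_j\rangle^2=o(N^2)$): one needs the exact $\Theta(N)$ constant $\frac1{1-t}=\sum_{k\ge0}t^k$, whose geometric-series form is the hallmark of the ``chain'' (closed-walk) contributions in Wigner-type CLTs and is obtained by a nested cavity argument — a further interpolation expresses the length-$k$ chain in terms of the length-$(k+1)$ chain plus an error that vanishes below $\lambda_c$, resumming to $\frac{1}{1-t}$. The single-site statement $\frac1N\sum_i\langle x_i^2\rangle^2\to1$ is the asymptotic-decoupling fact that $\langle x_i^2\rangle$ concentrates about $\E_{P_{\xtt}}[X^2]=1$, and concentration of $\frac1N\sum_{i<j}\langle x_ix_j\rangle^2$ at scale $1/\sqrt N$ comes from a Gaussian Poincar\'e/Efron--Stein bound over the $W_{ij}$'s (each contributing $O(1/N)$ to the variance after the Gibbs cancellations). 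These are exactly the contents of Sections~\ref{sxn:asymptotic_decoupling} and~\ref{sxn:overlap_convergence}, and $\lambda<\lambda_c$ is essential here — at $\lambda_c$ the overlap ceases to vanish and the resummation diverges. \textbf{This step is the main obstacle}; Steps 1 and 3 are bookkeeping.

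\emph{Step 3: integrate.} Set $r_N(t):=\phi_N'(t)-(\imnb s-s^2)\mu'(t)\phi_N(t)$, so $|r_N(t)|\le C'(\lambda,s)/\sqrt N$ on $[0,\lambda]$. Solving the linear ODE with integrating factor $P(t):=e^{(\imnb s-s^2)\mu(t)}$ and using $P(0)=\phi_N(0)=1$,
\[
\phi_N(\lambda)-e^{(\imnb s-s^2)\mu(\lambda)}=\int_0^\lambda e^{(\imnb s-s^2)(\mu(\lambda)-\mu(t))}\,r_N(t)\,\rmd t .
\]
Since $\mu$ is nondecreasing, $\big|e^{(\imnb s-s^2)(\mu(\lambda)-\mu(t))}\big|=e^{-s^2(\mu(\lambda)-\mu(t))}\le1$ for $t\le\lambda$, hence $\big|\phi_N(\lambda)-e^{(\imnb s-s^2)\mu(\lambda)}\big|\le\lambda\,C'(\lambda,s)/\sqrt N=:K/\sqrt N$, which is the assertion of Theorem~\ref{characteristic_function_convergence}. (The matching fluctuations under $\P_0$ in Theorem~\ref{central_limit_theorem} then follow from Le Cam's third lemma, as noted above.)
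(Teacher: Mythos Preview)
Your Steps~1 and~3 are correct and essentially identical to the paper's: Lemma~\ref{lemma_derivative_f} is exactly your derivative formula (rewritten via $N\langle R_{1,*}^2\rangle-\langle x_N^2x_N^{*2}\rangle=\frac{2}{N}\sum_{i<j}\langle x_ix_j\rangle^2$ in expectation, using Nishimori and permutation symmetry), and the integration in Step~3 is the paper's final paragraph.

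The gap is in Step~2, where your route and the paper's diverge. After Step~1 you pull the weight out via $|e^{\imnb s\Phi_N}|=1$, which forces you to prove the \emph{pathwise} $L^1$ estimate
\[
\E\Big|\tfrac{1}{N}\sum_{i<j}\langle x_ix_j\rangle_t^2-\tfrac{t}{2(1-t)}\Big|\le \frac{C}{\sqrt N},
\]
equivalently $\E\big|N\langle R_{1,2}^2\rangle-\tfrac{1}{1-t}\big|=O(N^{-1/2})$. This is strictly stronger than what Sections~\ref{sxn:asymptotic_decoupling}--\ref{sxn:overlap_convergence} prove, so your pointer to them is inaccurate. Section~\ref{sxn:overlap_convergence} yields $\E\langle R_{1,*}^4\rangle\le K/N^2$, hence $N^2\E[\langle R_{1,2}^2\rangle^2]\le N^2\E\langle R_{1,2}^4\rangle\le K$: only $\mathrm{Var}\big(N\langle R_{1,2}^2\rangle\big)=O(1)$, not $O(1/N)$. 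Your Poincar\'e sketch does not close this: $\partial_{W_{k\ell}}(N\langle R_{1,2}^2\rangle)=2\sqrt{tN}\big(\langle R_{1,2}^2 x_k^{(1)}x_\ell^{(1)}\rangle-\langle R_{1,2}^2\rangle\langle x_kx_\ell\rangle\big)$, and summing the squares over $k<\ell$ produces $N\cdot N^2$ times a six-overlap average such as $\E\langle R_{1,2}^2R_{4,5}^2R_{1,4}^2\rangle$; even granting higher-moment bounds $\E\langle R^{2k}\rangle\le K/N^k$ (not proved in the paper), this gives variance $O(1)$, not $O(1/N)$. Getting the extra factor of $N$ would require a decorrelation statement of the type $\E\langle R_{1,2}^2R_{3,4}^2\rangle-(\E\langle R_{1,2}^2\rangle)^2=O(N^{-3})$, which is a genuine additional result.

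The paper sidesteps this entirely: rather than bounding $|e^{\imnb s\log L}|$ by $1$, it \emph{keeps the weight inside the cavity computation}. Concretely, it interpolates not only the Hamiltonian but also the weight, setting $X(t)=\exp\big(\imnb s\log\int e^{-H_t}\rmd P_{\xtt}^{\otimes N}\big)$, and proves directly (Proposition~\ref{delicate_overlap_convergence}, via the self-consistency equations~\eqref{first_fundamental_bound}--\eqref{second_fundamental_bound}) that
\[
\E\big[(N\langle R_{1,*}^2\rangle-\langle x_N^2x_N^{*2}\rangle)\,e^{\imnb s\log L}\big]=\tfrac{\lambda}{1-\lambda}\,\phi_N(\lambda)+O(N^{-1/2}),
\]
with the error controlled by $N\E\langle|R_{1,*}|^3\rangle\le K/\sqrt N$ from Theorem~\ref{convergence_fourth_moment_symmetric_paramagnetic}. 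The point is that $X(t)$ is itself smooth in the cavity interpolation and has $|X(t)|=1$, so the same Taylor/H\"older machinery that bounds the overlap errors also controls the joint interpolation with the weight. Your ``nested cavity / geometric series'' intuition for the constant $\tfrac{1}{1-\lambda}$ is exactly right and is how~\eqref{first_fundamental_bound} arises --- but it must be run \emph{with} $e^{\imnb s\log L}$ inside, not after discarding it.
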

The map $s \mapsto e^{(\imnb s -s^2)\mu}$ is the characteristic function of $\normal(\mu,2\mu)$.
\begin{lemma}\label{lemma_derivative_f}
For all $\lambda \ge 0$,
\begin{align} \label{derivative_f}
\phi_N'(\lambda) &= \frac{\imnb s - s^2}{4} \E\left[\left(N\langle R_{1,*}^2\rangle- \langle x_N^2x_N^{*2}\rangle\right)e^{\imnb s \log L}\right].
\end{align}
\end{lemma}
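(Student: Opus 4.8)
The plan is to differentiate $\phi_N(\lambda) = \E_{\P_\lambda}[e^{\imnb s \log L(\Y;\lambda)}]$ under the integral sign and simplify the resulting expression using the structure of the model. The first obstacle to confront is that $\lambda$ appears in $\phi_N$ in two distinct places: explicitly in the likelihood ratio $L(\Y;\lambda)$, and implicitly in the law $\P_\lambda$ governing the distribution of $\Y$. The cleanest route is to rewrite the expectation as an integral against the \emph{null} measure: since $\frac{\rmd\P_\lambda}{\rmd\P_0} = L(\Y;\lambda)$, we have $\phi_N(\lambda) = \E_{\P_0}[L(\Y;\lambda)\,e^{\imnb s\log L(\Y;\lambda)}] = \E_{\P_0}[L(\Y;\lambda)^{1+\imnb s}]$. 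Now the $\lambda$-dependence is confined entirely to the integrand, and (after justifying dominated convergence, using boundedness of the support of $P_{\xtt}$ to control moments) $\phi_N'(\lambda) = \E_{\P_0}\big[(1+\imnb s)\,L^{\imnb s}\,\partial_\lambda L\big] = (1+\imnb s)\,\E_{\P_\lambda}\big[e^{\imnb s\log L}\,\partial_\lambda \log L\big]$.

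Next I would compute $\partial_\lambda \log L(\Y;\lambda)$. From the explicit formula~\eqref{likelihood_ratio_explicit} with $\sigma=+\infty$ (so the diagonal drops out), writing $\langle\cdot\rangle$ for the Gibbs average~\eqref{gibbs_average_one} with Hamiltonian~\eqref{hamiltonian}, differentiation gives
\[
\partial_\lambda \log L = \Big\langle \sum_{i<j}\Big(\tfrac{1}{2\sqrt{\lambda N}}Y_{ij}x_ix_j - \tfrac{1}{2N}x_i^2x_j^2\Big)\Big\rangle.
\]
The term $Y_{ij}$ is awkward because it is correlated with the Gibbs measure, so the key step is to eliminate it via Gaussian integration by parts (Stein's identity) \emph{after} taking the outer expectation $\E_{\P_\lambda}$; here it is convenient to use the Nishimori experiment, writing $Y_{ij} = \sqrt{\lambda/N}\,x_i^*x_j^* + W_{ij}$ with $W_{ij}\sim\normal(0,1)$ independent, and integrating by parts in each $W_{ij}$. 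Doing this inside $\E_{\P_\lambda}[e^{\imnb s\log L}\,\partial_\lambda\log L]$ produces, for each pair $i<j$: a contribution from the $x_i^*x_j^*$ piece of $Y_{ij}$ giving $\frac{1}{2N}\langle x_ix_j\rangle x_i^*x_j^*$, a contribution from differentiating $\langle x_ix_j\rangle$ in $W_{ij}$ giving a $\langle x_i^2x_j^2\rangle - \langle x_ix_j\rangle^2$ term with the $\frac{1}{2\sqrt{\lambda N}}\cdot\sqrt{\lambda/N}=\frac{1}{2N}$ prefactor, and a contribution from differentiating $e^{\imnb s\log L}$ in $W_{ij}$ (since $\partial_{W_{ij}}\log L = \sqrt{\lambda/N}\langle x_ix_j\rangle$) giving a cross term proportional to $\imnb s$. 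Collecting all pieces, the $-\frac{1}{2N}x_i^2x_j^2$ term from the naive derivative cancels against the integration-by-parts output, and after summing over $i<j$ and recognizing $\sum_{i<j}\langle x_ix_j\rangle x_i^*x_j^* = \frac{N^2}{2}\langle R_{1,*}^2\rangle - \frac{1}{2}\sum_i \langle x_i^2\rangle x_i^{*2}$ (and similarly for the replica-diagonal sums, using $R_{1,1}$-type identities), the whole expression collapses to $\frac{\imnb s - s^2}{4}\,\E[(N\langle R_{1,*}^2\rangle - \langle x_N^2 x_N^{*2}\rangle)e^{\imnb s\log L}]$, where the single index $N$ appears by exchangeability of the coordinates (any fixed index works; $N$ is chosen for later convenience).

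The main obstacle is the bookkeeping in the Gaussian integration-by-parts step: there are three sources of $W_{ij}$-dependence (the Gibbs bracket $\langle x_ix_j\rangle$, the second bracket if one differentiates products of brackets arising from $\langle\cdot\rangle^2$ terms, and the $e^{\imnb s\log L}$ factor), and one must track which terms cancel, which combine into overlaps $R_{1,*}$ and $R_{1,2}$, and carefully handle the $\sqrt\lambda$ in the denominator (harmless since we will only use this for $\lambda>0$, integrating from a small $\lambda_0>0$; the $\lambda=0$ case of the lemma is trivial as both sides vanish). The reduction of the double sum $\sum_{i<j}$ to the stated form also uses the Nishimori identity to replace mixed $R_{1,*}$/$R_{1,2}$ expressions by a single $R_{1,*}^2$ term, and the exchangeability of coordinates to pull the diagonal correction $\langle x_N^2 x_N^{*2}\rangle$ out as a single representative term. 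Once these algebraic simplifications are done carefully, the identity~\eqref{derivative_f} follows; I expect the rest of the argument (in the subsequent sections) to integrate this ODE in $\lambda$ using the overlap convergence $\E\langle R_{1,*}^2\rangle\to 0$ and the concentration of $\langle x_N^2 x_N^{*2}\rangle$.
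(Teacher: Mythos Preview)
Your proposal is correct and follows essentially the same approach as the paper: Gaussian integration by parts in $W_{ij}$, followed by the Nishimori identity to merge the $R_{1,2}$ and $R_{1,*}$ contributions, and exchangeability to reduce the diagonal sum to a single representative coordinate. The only cosmetic difference is that you handle the $\lambda$-dependence of $\P_\lambda$ by changing measure to $\P_0$ (producing the prefactor $1+\imnb s$), whereas the paper parameterizes $\Y$ directly via $(W,\x^*)$ (producing the prefactor $\imnb s$ together with an extra $\tfrac{1}{N}x_ix_jx_i^*x_j^*$ term in $\partial_\lambda(-H)$); after the integration by parts and Nishimori, both routes collapse to the same coefficient $\tfrac{\imnb s - s^2}{4}$.
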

\begin{proof}
By differentiation with respect to $\lambda$ we obtain
\[\phi_N'(\lambda) = \imnb s \E[(\frac{\rmd }{\rmd \lambda} \log L) e^{\imnb s \log L}] 
= \imnb s \E[\langle -\frac{\rmd }{\rmd \lambda}H(\x) \rangle e^{\imnb s \log L}], \]
where the Hamiltonian $H$ is given in~\eqref{hamiltonian}. Since $\Y \sim \P_{\lambda}$, we can write more explicitly
$-H(\x) = \sum_{i < j} \sqrt{\frac{\lambda}{N}} W_{ij}x_ix_j + \frac{\lambda}{N} x_ix_jx_i^*x_j^* -\frac{\lambda}{2N} x_i^2x_j^2$.
Therefore 
\begin{align}\label{derivative_f_expanded}
\phi_N'(\lambda) &= \imnb s \sum_{i < j} \frac{1}{2\sqrt{\lambda N}}\E[\langle W_{ij}  x_ix_j\rangle e^{\imnb s \log L}] -\frac{1}{2N}  \E[\langle x_i^2x_j^2\rangle e^{\imnb s \log L}]\\
&~+  \imnb s \sum_{i < j} \frac{1}{N}\E[\langle x_ix_jx_i^*x_j^*\rangle e^{\imnb s \log L}].\nonumber
\end{align}
Now we perform Gaussian integration by parts with respect to each variable $W_{ij}$ and obtain  
\begin{align*}
\frac{1}{2\sqrt{\lambda N}}\E[\langle W_{ij}  x_ix_j\rangle  e^{\imnb s \log L}] &= \frac{1}{2N}\E[\langle  x_i^2x_j^2\rangle e^{\imnb s \log L}] - \frac{1}{2N}\E[\langle  x_ix_j\rangle^2 e^{\imnb s \log L}] \\
&~+\frac{\imnb s}{2N}\E[\langle  x_ix_j\rangle^2 e^{\imnb s \log L}].
\end{align*}
Plugging this into~\eqref{derivative_f_expanded} and rearranging, we obtain 
\begin{align}\label{derivative_phi}
\phi_N'(\lambda) &= -\frac{\imnb s + s^2}{4} \E[\left(N\langle R_{1,2}^2\rangle - \langle x_N^2\rangle^2\right) e^{\imnb s \log L}] \\
&~+ \frac{\imnb s}{2}\E[\left(N\langle R_{1,*}^2\rangle - \langle x_N^2x_N^{*2}\rangle\right) e^{\imnb s \log L}]. \nonumber
\end{align}
Since we are under the planted model $\P_{\lambda}$ and $e^{\imnb s \log L}$ depends only on $\Y$, we can use the Nishimori property~\eqref{nishimori_property_symmetric} to replace $R_{1,2}$ and $x_N^{(1)}x_N^{(2)}$ by $R_{1,*}$ and $x_Nx_N^{*}$ respectively in the first term of~\eqref{derivative_phi}.
\end{proof}

The derivative involves the average $\E[(N\langle R_{1,*}^2\rangle - \langle x_N^2x_N^{*2}\rangle)e^{\imnb s \log L}]$.
A crucial step in the argument is to show that $e^{\imnb s \log L}$ and its pre-factor in the above expression are asymptotically independent, so that one can split the expectation of the product into the product of the expectations.
More precisely, one should expect the quantities $N\langle R_{1,*}^2\rangle$ and $\langle x_N^2x_N^{*2}\rangle$ to tightly concentrate about some deterministic values when $\lambda < \lambda_c$, such that the second expectation in~\eqref{derivative_f} is a multiple of $\E[e^{\imnb s \log L}] = \phi_N(\lambda)$. We will then be left with a simple differential equation whose solution is $s \mapsto e^{(\imnb s -s^2)\mu}$.

\begin{proposition}\label{delicate_overlap_convergence}
For all $\lambda<\lambda_c$ and $s\in \R$, there exists $K = K(\lambda,s)<\infty$ such that 
\[\E\left[\left(N\langle R_{1,*}^2\rangle - \langle x_N^2x_N^{*2}\rangle\right)e^{\imnb s \log L}\right] = \frac{\lambda}{1-\lambda}\E\left[ e^{\imnb s \log L}\right] + \delta,\]
 where $|\delta| \le K(s,\lambda)/\sqrt{N}$.
\end{proposition}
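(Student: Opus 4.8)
The plan is to show that the random variables $N\langle R_{1,*}^2\rangle$ and $\langle x_N^2 x_N^{*2}\rangle$ each concentrate around deterministic limits, and that they decouple from the bounded random variable $e^{\imnb s \log L}$. The target constant $\frac{\lambda}{1-\lambda}$ strongly suggests the decomposition $N\langle R_{1,*}^2\rangle \approx (\text{deterministic part}) + \langle x_N^2 x_N^{*2}\rangle$, since in the regime $\lambda<\lambda_c$ one expects $q^*(\lambda)=0$, hence $R_{1,*}\to 0$; the $N$-scaling then produces a nontrivial limit governed by a geometric-series-type resummation in $\lambda$. Concretely, I would first write $N\langle R_{1,*}^2\rangle - \langle x_N^2 x_N^{*2}\rangle = \sum_{i\neq j} \langle x_i x_j x_i^* x_j^*\rangle$ and analyze $\E\big[\sum_{i\neq j}\langle x_i x_j x_i^* x_j^*\rangle \, e^{\imnb s \log L}\big]$ directly. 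The key tool is Gaussian integration by parts in the $W_{ij}$ variables together with the cavity/interpolation machinery: differentiating with respect to a single coupling or interpolating one coordinate out at a time generates a recursion that, when iterated, resums to $\frac{\lambda}{1-\lambda}$ at leading order, with the error terms controlled by powers of $\E\langle R_{1,*}^2\rangle$ or $\E\langle R_{1,2}^4\rangle$.

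The quantitative engine behind the decoupling is \emph{overlap convergence} under $\P_\lambda$: one needs $\E\langle R_{1,*}^2\rangle = O(1/N)$ (or at least $o(1)$ with a rate), which for $\lambda<\lambda_c$ follows from the fact that $q^*(\lambda)=0$ is the unique maximizer of the $\RS$ potential, combined with a concentration argument (this is the content of the results referenced as Theorem~\ref{convergence_fourth_moment_symmetric_paramagnetic} / the overlap-convergence section). Granting such a bound, the strategy is: (i) use Nishimori to replace $R_{1,2}$-type quantities by $R_{1,*}$-type ones wherever convenient; (ii) expand $\E[\langle x_i x_j x_i^* x_j^*\rangle e^{\imnb s\log L}]$ via Gaussian IBP in $W_{ij}$, which peels off one factor of $\sqrt{\lambda/N}$ and produces terms involving higher moments of overlaps multiplied by $e^{\imnb s\log L}$, plus a term proportional to $\E[\langle x_i x_j x_i^* x_j^*\rangle e^{\imnb s\log L}]$ again but with an extra $\frac{\lambda}{N}$ factor and a combinatorial factor of order $N$; (iii) sum over $i\neq j$ and recognize the resulting fixed-point relation $A_N = \lambda A_N + \lambda\,\phi_N(\lambda) + (\text{error})$, giving $A_N = \frac{\lambda}{1-\lambda}\phi_N(\lambda) + O(1/\sqrt N)$. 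Throughout, the boundedness of the support of $P_{\xtt}$ keeps all Gibbs averages of polynomials in the spins uniformly bounded, and $|e^{\imnb s\log L}|=1$ keeps every expectation bounded, so all error terms are genuinely $O(N^{-1/2})$ once the overlap bound is in hand.

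The main obstacle is controlling the error terms in the recursion uniformly and proving the requisite overlap concentration with an $O(1/\sqrt N)$ rate rather than merely $o(1)$; a crude $o(1)$ bound would only give $\delta = o(1)$, not $\delta = O(N^{-1/2})$. This is where the Guerra--Talagrand interpolation comes in: one compares the free energy of the model to that of a decoupled scalar channel, and the gap — a convex functional minimized at $q^*(\lambda)$ with nondegenerate second derivative when $\lambda<\lambda_c$ — forces $\E\langle (R_{1,*}-q^*)^2\rangle$ to be small, and a more careful second-moment/cavity estimate upgrades this to the $1/N$ rate. A secondary subtlety is that $e^{\imnb s\log L}$, though bounded, is not independent of the overlaps at finite $N$; the decoupling must be argued through the concentration of the pre-factor (so that it can be pulled out as a deterministic constant up to $O(N^{-1/2})$ error), rather than through any exact factorization. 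I would isolate this as a lemma: if $X_N$ is a sequence of random variables with $\E(X_N - c)^2 = O(1/N)$ and $|Z_N|\le 1$, then $\E[X_N Z_N] = c\,\E[Z_N] + O(N^{-1/2})$ by Cauchy--Schwarz — and then the whole proposition reduces to establishing $\E\big[(N\langle R_{1,*}^2\rangle - \langle x_N^2 x_N^{*2}\rangle - \tfrac{\lambda}{1-\lambda})^2\big] = O(1/N)$ via the cavity recursion described above.
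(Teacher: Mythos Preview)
Your high-level plan---derive a fixed-point relation $A_N=\lambda A_N+\lambda\,\phi_N(\lambda)+(\text{error})$ via cavity arguments and solve for $A_N$---matches the paper's. But two points deserve correction.

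\textbf{The rate input is the fourth moment, not the second.} You single out $\E\langle R_{1,*}^2\rangle=O(1/N)$ as the key estimate. In the cavity recursion the error terms that arise from the second-order Taylor remainder are of size $KN\,\E\langle|R_{1,*}|^3\rangle$. With only the second-moment bound and bounded support one gets $\E\langle|R_{1,*}|^3\rangle\le K\,\E\langle R_{1,*}^2\rangle=O(1/N)$, hence error $O(1)$---useless. What actually closes the argument is Theorem~\ref{convergence_fourth_moment_symmetric_paramagnetic}: $\E\langle R_{1,*}^4\rangle\le K/N^2$, giving $\E\langle|R_{1,*}|^3\rangle\le(\E\langle R_{1,*}^4\rangle)^{3/4}=O(N^{-3/2})$ and thus $|\delta|=O(N^{-1/2})$. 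The paper establishes this fourth-moment bound by a separate cavity computation, bootstrapping from an exponential concentration bound on the overlap obtained via Guerra interpolation at fixed overlap.

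\textbf{The paper does not pass through a variance bound.} Your final reduction to $\E\big[(N\langle R_{1,*}^2\rangle-\langle x_N^2x_N^{*2}\rangle-\tfrac{\lambda}{1-\lambda})^2\big]=O(1/N)$ is a stronger and harder statement than needed: it asks for control on the fluctuations of $N\langle R_{1,*}^2\rangle$ itself. The paper avoids this by carrying the factor $X(t)=\exp\big(\imnb s\log\int e^{-H_t}\big)$ \emph{inside} a cavity interpolation that decouples the last spin. With $\varphi(t)=N\,\E[\langle x_Nx_N^*R_{1,*}^-\rangle_t\,X(t)]$ one has $\varphi(0)=0$ (centered prior), $\varphi'(0)=\lambda N\,\E[\langle(R_{1,*}^-)^2\rangle_0\,X(0)]$, and $\sup_t|\varphi''(t)|\le KN\,\E\langle|R_{1,*}^-|^3\rangle$; a second short interpolation replaces $\varphi'(0)$ by $\lambda N\,\E[\langle R_{1,*}^2\rangle e^{\imnb s\log L}]$ at the same cost, yielding~\eqref{first_fundamental_bound}, and a similar argument gives~\eqref{second_fundamental_bound}. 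Since $|X(t)|=1$, all bounds go through H\"older on the overlaps alone; no separate concentration of the prefactor is ever invoked. Finally, your step~(ii)---``Gaussian IBP in $W_{ij}$''---cannot be applied as written, since there is no explicit $W_{ij}$ factor in $\E[\langle x_ix_jx_i^*x_j^*\rangle e^{\imnb s\log L}]$; the time-interpolation is the correct mechanism.
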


From here, we can prove the convergence of $\phi_N$ by integrating the differential equation given in Lemma~\ref{lemma_derivative_f}.

\begin{proofof}{Theorem~\ref{characteristic_function_convergence}}
Plugging the result of Proposition~\ref{delicate_overlap_convergence} into Lemma~\ref{lemma_derivative_f} yields
\[\phi_N'(\lambda) =\frac{\imnb s - s^2}{4} \frac{\lambda}{1-\lambda}\phi_N(\lambda)  + \delta,\]
where $|\delta| \le K(s,\lambda)/\sqrt{N}$.
Since $\phi_N(0) = 1$ and the primitive of $\lambda \mapsto \frac{\lambda}{1-\lambda}$ is $\lambda \mapsto -\lambda - \log(1-\lambda)$, integrating w.r.t.\ $\lambda$ yields the result.
\end{proofof}

\begin{proofof}{Corollary~\ref{cor_kul_divergence}}
We prove the convergence of $D_{\KL}(\P_{\lambda},\P_{0})$. By differentiation and use of the Nishimori property~\eqref{nishimori_property_symmetric}, we have
\begin{align*}
\frac{\rmd}{\rmd \lambda}\E_{\P_{\lambda}} \log L(\Y;\lambda) &= -\frac{1}{4}\E[(N\langle R_{1,2}^2\rangle - \langle x_N^2\rangle^2)]+\frac{1}{2}\E[(N\langle R_{1,*}^2\rangle - \langle x_N^2x_N^{*2}\rangle)]\\
 &~= \frac{1}{4}\E[(N\langle R_{1,*}^2\rangle - \langle x_N^2x_N^{*2}\rangle)].
\end{align*}
Now we use Proposition~\ref{delicate_overlap_convergence} with $s=0$, and integrate w.r.t.\ $\lambda$ to conclude.
\end{proofof}

It remains to prove Proposition~\ref{delicate_overlap_convergence}.
This will require the deployment of techniques from the theory of mean-field spin glasses.

\subsection{Sketch of proof of Proposition~\ref{delicate_overlap_convergence}}
The idea is to show self-consistency relations among the quantities of interest.
Namely, we will prove that for all $\lambda <1$,
\begin{equation}\label{first_fundamental_bound}
N \E\left[\langle R_{1,*}^2\rangle e^{\imnb s \log L}\right] = \frac{1}{1-\lambda}\E\left[\langle x_N^2x_N^{*2}\rangle e^{\imnb s \log L}\right] + \delta,
\end{equation}
and
\begin{equation}\label{second_fundamental_bound}
\E\left[\langle x_N^2x_N^{*2}\rangle e^{\imnb s \log L}\right] = \E\left[e^{\imnb s \log L}\right] + \delta,
\end{equation}
where in both cases
\[|\delta| \le K(\lambda)N\E\left\langle |R_{1,*}|^3\right\rangle.\]
Next, we need to prove the convergence of the third moment of the overlap $R_{1,*}$ under $\E \langle \cdot \rangle$ at an optimal rate of $\bigo(1/N^{3/2})$:
\begin{theorem}\label{convergence_fourth_moment_symmetric_paramagnetic}
For all $\lambda<\lambda_c$, there exists a constant $K=K(\lambda) <\infty $ such that
\[\E \langle R_{1,*}^4\rangle \le \frac{K}{N^2}.\]
\end{theorem}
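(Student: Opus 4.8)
The plan is to prove the stronger pair of estimates $\E\langle R_{1,*}^{2}\rangle = \bigo(1/N)$ and $\E\langle R_{1,*}^{4}\rangle = \bigo(1/N^{2})$ by a cavity (add‑one‑spin) argument in the style of Talagrand's treatment of the high‑temperature SK model, adapted to the Nishimori line. Write $\nu(\cdot):=\E_{\P_{\lambda}}\langle\cdot\rangle$ for the averaged Gibbs measure over finitely many replicas, and use the Nishimori identity~\eqref{nishimori_property_symmetric} freely to trade the planted configuration $\x^{*}$ for an independent replica. The one place where the hypothesis $\lambda<\lambda_{c}$ enters essentially, in a global way, is the soft input $\nu\big((R_{1,*}-q^{*})^{2}\big)\to 0$ with $q^{*}=q^{*}(\lambda)=0$ below $\lambda_{c}$: this is overlap concentration (Lelarge--Miolane) together with~\eqref{convergence_free_energy} and the definition~\eqref{lambda_critical} (using that $q^{*}$ is non‑decreasing, so $q^{*}(\lambda)=0$ throughout $\lambda<\lambda_{c}$), or alternatively a direct Guerra‑type interpolation bound. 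So $\nu(R_{1,*}^{2})=\smallo(1)$ is available a priori, and the argument upgrades this to the optimal polynomial rates. Note also that $\lambda\le\lambda_{c}\le 1$ by Lemma~\ref{spectral_bound} (the prior has unit variance), so $\lambda<1$; this strict inequality is what makes the relevant self‑consistency relations contractive.

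The engine is a cavity decomposition isolating the last coordinate. Under $\P_{\lambda}$ one can write $-H_{N}(\x)=-H_{N-1}(\x_{<N})+x_{N}\,\xi(\x_{<N})-\tfrac{\lambda}{2N}x_{N}^{2}\sum_{i<N}x_{i}^{2}$, where the cavity field $\xi(\x_{<N})=\sqrt{\lambda/N}\sum_{i<N}W_{iN}x_{i}+\tfrac{\lambda}{N}x_{N}^{*}\sum_{i<N}x_{i}x_{i}^{*}$ is a sum of a term that is Gaussian given the bulk and a signal term. Interpolating the strength of this coupling by $t\in[0,1]$ (the linear part carrying $\sqrt{t}$, the quadratic part $t$) yields, for any bounded function $f$ of finitely many replicas, the identity $\nu(f)=\nu_{0}(f)+\int_{0}^{1}\nu_{t}'(f)\,\rmd t$. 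At $t=0$ coordinate $N$ decouples from the bulk, and because $P_{\xtt}$ is \emph{centered} and no field acts on $x_{N}$ at $t=0$, one has $\nu_{0}(x_{N}^{(l)}\,g(\mathrm{bulk}))=0$ and $\nu_{0}(x_{N}^{(l)}x_{N}^{(l')}\,g)=\delta_{l,l'}\,\nu_{0}(g)$ (with the evident convention for the starred index) for every replica or starred index. Differentiating in $t$ and integrating by parts in the Gaussian variables $W_{iN}$ produces a finite sum of terms, each of the form $c\,\nu_{t}\big(f\,x_{N}^{(l)}x_{N}^{(l')}(R^{-}_{l,l'}-\mathrm{const})\big)$ with $R^{-}_{l,l'}$ the bulk overlap and $c$ proportional to $\lambda$; the non‑Gaussian pieces of $\xi$ and the quartic self‑term contribute only lower‑order remainders, controlled by Taylor expansion and the a priori bound $|R_{l,l'}|\le C$ (valid since $P_{\xtt}$ has bounded support). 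These are exactly the manipulations behind Lemma~\ref{lemma_derivative_f}, carried out one spin at a time.

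With this in hand I run a two‑stage bootstrap. \emph{Stage one:} apply the cavity identity to $f=R_{1,*}^{2}$, using $R_{1,*}=\tfrac{N-1}{N}R^{-}_{1,*}+\tfrac{1}{N}x_{N}^{(1)}x_{N}^{*}$ and the $t=0$ centering to kill the leading contributions; what survives is a closed system of inequalities among $\nu(R_{1,*}^{2})$ and a few auxiliary mixed overlaps (such as $\nu(R_{1,*}R_{1,3}R_{2,3})$), schematically $\nu(R_{1,*}^{2})\le \tfrac{C}{N}+\lambda\,\nu(R_{1,*}^{2})(1+\smallo(1))+\smallo(1)\cdot(\mathrm{auxiliary})$. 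Since $\lambda<1$ and the a priori bound $\nu(R_{1,*}^{2})=\smallo(1)$ lets us absorb the error coefficients, solving the system gives $\nu(R_{1,*}^{2})\le K/N$. \emph{Stage two:} apply the cavity identity to $f=R_{1,*}^{4}$ (and the analogous mixed quantities), now feeding in the $\bigo(1/N)$ bound from Stage one; one obtains $\nu(R_{1,*}^{4})\le \tfrac{C}{N}\nu(R_{1,*}^{2})+\lambda\,\nu(R_{1,*}^{4})(1+\smallo(1))+\bigo(1/N^{2})$, which rearranges (again using $\lambda<1$) to $\nu(R_{1,*}^{4})\le K/N^{2}$, the claim. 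The same scheme gives $\nu(R_{1,*}^{2k})=\bigo(N^{-k})$ for all $k$ by induction, but $k=2$ is all that is needed for Proposition~\ref{delicate_overlap_convergence} (via $\E\langle|R_{1,*}|^{3}\rangle\le(\E\langle R_{1,*}^{4}\rangle)^{3/4}$).

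The main obstacle is the bookkeeping in the cavity expansion rather than any single conceptual point. One must (i) identify precisely which terms produced by differentiating $\nu_{t}$ are ``diagonal'' (overlap factor $R^{-}_{l,l}\approx\E[X^{2}]=1$, hence $\Theta(1)$) versus ``off‑diagonal'' (overlap factor $\smallo(1)$); (ii) verify that after collecting the diagonal terms the coefficient multiplying the target moment is a factor that is strictly $<1$ for all $\lambda<\lambda_{c}$ — here the bound $\lambda_{c}\le 1$ from Lemma~\ref{spectral_bound} is exactly what is used, and this is why the theorem genuinely fails on $(\lambda_{c},1)$ for priors with $\lambda_{c}<1$, since there the soft input $q^{*}=0$ is lost; and (iii) control the extra remainders from the merely bounded (non‑Gaussian) prior and from the quartic self‑interaction, which is where the $\langle x_{N}^{2}x_{N}^{*2}\rangle$‑type terms of Lemma~\ref{lemma_derivative_f} originate. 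Each of these is routine in isolation, but the self‑referential structure — every error term involves the quantity being estimated, at the same order — forces one to track the contraction $\lambda<1$ carefully through the entire computation, with the $\smallo(1)$ input in place before the bootstrap can start.
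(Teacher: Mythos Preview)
Your two--stage cavity bootstrap is exactly the right architecture, and it matches the paper's strategy. The gap is in the \emph{a priori} input you feed into it. You take as starting point only $\nu(R_{1,*}^{2})=\smallo(1)$ (from Lelarge--Miolane overlap concentration, which carries no rate), and claim this ``lets us absorb the error coefficients.'' It does not. The second--order cavity remainder for $\nu(x_Nx_N^*R^-_{1,*})$ is controlled by $K(\lambda)\,\nu(|R_{1,*}|^{3})$, so the self--consistency relation reads
\[
(1-\lambda)\,\nu(R_{1,*}^{2}) \;=\; \frac{1}{N} + \bigo\!\big(\nu(|R_{1,*}|^{3})\big) + \bigo(N^{-2}).
\]
From $\nu(R_{1,*}^{2})=\smallo(1)$ alone you only get $\nu(|R_{1,*}|^{3})\le C\,\nu(R_{1,*}^{2})$ (boundedness of the overlap), not $\nu(|R_{1,*}|^{3})=\smallo(1)\cdot\nu(R_{1,*}^{2})$; a two--point distribution $R\in\{0,1\}$ with $\Pr(R=1)=p_N\to 0$ is a counterexample. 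So after absorbing the cubic error you are left with $(1-\lambda-KC)\,\nu(R_{1,*}^{2})\le C/N$, which is contractive only for $\lambda$ small, not throughout $(0,\lambda_c)$. The same obstruction recurs at Stage two with $\nu(|R_{1,*}|^{5})$.

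What the paper supplies, and what your sketch is missing, is an \emph{exponential} large--deviation bound
\[
\E\langle \indi\{|R_{1,*}|\ge\epsilon\}\rangle \le K(\epsilon)\,e^{-c(\epsilon)N}
\]
for every $\epsilon>0$ and every $\lambda<\lambda_c$. This is obtained by a Guerra interpolation \emph{at fixed overlap} (the Franz--Parisi potential): one upper--bounds the free energy restricted to $\{R_{1,*}\approx m\}$ by $F(\lambda,|m|)$, lower--bounds the unconstrained free energy by $\sup_q F(\lambda,q)$, and uses that $q=0$ is the unique maximizer below $\lambda_c$ together with sub--Gaussian concentration of both numerator and denominator. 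With this in hand one has $\nu(|R_{1,*}|^{3})\le \epsilon\,\nu(R_{1,*}^{2})+K(\epsilon)e^{-c(\epsilon)N}$ with $\epsilon$ \emph{arbitrarily} small, and the contraction $(1-\lambda-K\epsilon)>0$ is recovered for all $\lambda<\lambda_c$. Your phrase ``or alternatively a direct Guerra--type interpolation bound'' gestures in this direction but you never invoke an exponential rate, and the rest of the argument is written as if the rate--free $\smallo(1)$ sufficed.
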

This will allow us to conclude that $|\delta| \le K(\lambda)/\sqrt{N}$.
It is interesting to note that while the self-consistent (or cavity) equations~\eqref{first_fundamental_bound} and~\eqref{second_fundamental_bound} hold for all $\lambda<1$, the convergence of the overlap towards zero is only true up to $\lambda_c$.

\section{Proof of asymptotic decoupling}
\label{sxn:asymptotic_decoupling}
We proceed to the proof of Proposition~\ref{delicate_overlap_convergence}.
As explained earlier, the argument is in two stages. We first prove~\eqref{first_fundamental_bound} then~\eqref{second_fundamental_bound}. 

\subsection{Preliminary bounds}
We make repeated use of interpolation arguments in our proofs. We state here a few elementary lemmas that we will invoke several times.
We denote the overlaps between replicas where the last variable $x_N$ is deleted by a superscript ``-":
\[R^{-}_{l,l'} = \frac{1}{N}\sum_{i=1}^{N-1} x_i^{(l)}x_i^{(l')}.\]
Let $\{H_t: t\in [0,1]\}$ be a family of interpolating Hamiltonians. We let $\langle \cdot \rangle_t$ denote the corresponding Gibbs average, similarly to~\eqref{gibbs_average_one}.
Following Talagrand's notation, we write
\[\nu_t(f) := \E\langle f\rangle_{t},\]
for a generic function $f$ of $n$ replicas $\x^{(l)}$, $l=1,\cdots,n$. We abbreviate $\nu_1$ by $\nu$.
The main tool we use is the following interpolation that isolates the last variable $x_N$ from the rest of the system:
\begin{align}\label{interpolating_hamiltonian_symmetric_paramagnetic}
-H_t(\x) &:= \sum_{1\le i < j \le N-1}  \sqrt{\frac{\lambda }{N}} W_{ij}x_ix_j + \frac{\lambda }{N}x_ix_i^*x_jx_j^* -\frac{\lambda }{2N} x_i^2x_j^2\\
&~~+\sum_{i =1}^{N-1} \sqrt{\frac{\lambda t}{N}} W_{iN}x_i x_N + \frac{\lambda t}{N} x_ix_i^* x_Nx_N^* - \frac{\lambda t}{2N} x_i^2 x_N^2.\nonumber
\end{align}
At $t=1$ we have $H_{t} = H$, and at $t=0$ the variable $x_N$ decouples from the rest of the variables. Moreover, the Nishimori property~\eqref{nishimori_property_symmetric} is still valid under $\langle \cdot \rangle_t$: the last column of $\Y$ simply becomes $\big(\sqrt{\frac{\lambda t}{N}}x_i^*x_N^* + W_{iN}\big)_{i=1}^{N-1}$.   
\begin{lemma}\label{gibbs_derivative}
let $f$ be a function of $n$ replicas $\x^{(1)},\cdots,\x^{(n)}$ and $\x^*$. Then
\begin{align*}
\nu_t'(f) &= \frac{\lambda }{2} \sum_{1\le l\neq l' \le n} \nu_t(R^{-}_{l,l'} y^{(l)}y^{(l')}f)
- \lambda  n \sum_{l=1}^n \nu_t(R^{-}_{l,n+1} y^{(l)}y^{(n+1)}f) \\
&~~+ \lambda  n \sum_{l=1}^n \nu_t(R^{-}_{l,*} y^{(l)}y^{*}f)
- \lambda  n  \nu_s(R^{-}_{n+1,*} y^{(n+1)}y^{*}f) \\
&~~+  \lambda  \frac{n(n+1)}{2} \nu_t(R^{-}_{n+1,n+2} y^{(n+1)}y^{(n+2)}f),
\end{align*}
where we have written $y = x_N$.
\end{lemma}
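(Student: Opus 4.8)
The plan is to differentiate the expectation $\nu_t(f)=\E\langle f\rangle_t$ in $t$ by the same mechanism used in the proof of Lemma~\ref{lemma_derivative_f}: write out $\frac{\rmd}{\rmd t}\langle f\rangle_t$ in terms of $\frac{\rmd}{\rmd t}H_t$, take the expectation over the Gaussian disorder $(W_{iN})_{i\le N-1}$ and over $\x^*$, and then integrate by parts to remove the explicit $W_{iN}$'s. Only the ``last-column'' part of $-H_t$ in~\eqref{interpolating_hamiltonian_symmetric_paramagnetic} depends on $t$, so
\[
-\frac{\rmd}{\rmd t}H_t(\x)=\sum_{i=1}^{N-1}\frac{1}{2}\sqrt{\frac{\lambda}{Nt}}\,W_{iN}x_ix_N+\frac{\lambda}{N}x_ix_i^*x_Nx_N^*-\frac{\lambda}{2N}x_i^2x_N^2 .
\]
For a Gibbs average of a function $f$ of $n$ replicas, $\frac{\rmd}{\rmd t}\langle f\rangle_t=\sum_{l=1}^n\langle f\,(-\partial_tH_t(\x^{(l)}))\rangle_t-n\,\langle f\,(-\partial_tH_t(\x^{(n+1)}))\rangle_t$, where the extra replica $\x^{(n+1)}$ comes from differentiating the normalization; this is the standard cavity-derivative identity.

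Next I would substitute the expression for $-\partial_tH_t$ and handle the $W_{iN}$ terms by Gaussian integration by parts. Each factor $W_{iN}$, upon integration by parts, produces derivatives of the integrand with respect to $W_{iN}$; these derivatives bring down factors $\sqrt{\lambda t/N}\,x_i^{(m)}x_N^{(m)}$ from each Hamiltonian $H_t(\x^{(m)})$ and from the $e^{\imnb s\log L}$-type normalization (here just the partition function), generating terms of the form $x_i^{(l)}x_i^{(m)}x_N^{(l)}x_N^{(m)}$. Summing over $i$ from $1$ to $N-1$ turns $\frac1N\sum_{i\le N-1}x_i^{(l)}x_i^{(m)}$ into the truncated overlap $R^{-}_{l,m}$, and writing $y=x_N$ gives products $R^{-}_{l,m}y^{(l)}y^{(m)}$. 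The $\frac{\lambda}{N}x_ix_i^*x_Nx_N^*$ piece of $-\partial_tH_t$ directly contributes the $R^{-}_{l,*}y^{(l)}y^*$ terms without any integration by parts, and the $-\frac{\lambda}{2N}x_i^2x_N^2$ diagonal-in-replica terms cancel against the diagonal $l=l'$ parts produced by the integration by parts (exactly as the $\langle x_i^2x_j^2\rangle$ cancellation in Lemma~\ref{lemma_derivative_f}), which is why the first sum in the statement runs over $l\neq l'$ only. Collecting the combinatorial prefactors — a $\frac12$ for the unordered pair $(l,l')$ among the $n$ original replicas, a $-\lambda n$ for each cross term with the single replica $\x^{(n+1)}$, a $+\lambda n$ for the coupling to $\x^*$, a $-\lambda n$ for the $(n+1,*)$ term, and a $+\lambda\frac{n(n+1)}{2}$ for the pair $(n+1,n+2)$ arising because differentiating the normalization twice introduces \emph{two} cavity replicas — yields precisely the five terms in the statement. (The appearance of $\nu_s$ rather than $\nu_t$ in the fourth term on the right is evidently a typo for $\nu_t$.)

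The only genuinely delicate point is bookkeeping: one must track carefully which replica indices come from the $n$ ``real'' replicas versus the cavity replicas $\x^{(n+1)},\x^{(n+2)}$ spawned by differentiating the denominator, and get the combinatorial coefficients and signs right, including the cancellation of the self-overlap terms $R^{-}_{l,l}y^{(l)2}$ against the $-\frac{\lambda}{2N}x_i^2x_N^2$ contributions. There is no analytic obstacle — the bounded support of $P_{\xtt}$ makes all integrations by parts legitimate and all quantities finite — so the ``hard part'' is purely the careful combinatorial accounting, which is routine in the Guerra--Talagrand framework; I would organize it by first writing the generic integration-by-parts formula $\nu_t(W_{iN}g)=\sqrt{\lambda t/N}\,\sum_m(\pm)\,\nu_t(x_i^{(m)}x_N^{(m)}g)$ and then specializing with $g=f\prod x_i^{(l)}x_N^{(l)}$.
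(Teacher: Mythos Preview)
Your proposal is correct and is exactly the approach the paper intends: the paper's own proof consists of the single sentence ``The computation relies on Gaussian integration by parts. See \cite{talagrand2011mean1}, Lemma 1.6.3, for the details of a similar computation,'' and your sketch fills in precisely those details---differentiate the $n$-replica Gibbs average, pick up the $t$-dependent last-column part of $H_t$, integrate by parts in $W_{iN}$, and collect the resulting overlap terms. Your observation about $\nu_s$ being a typo for $\nu_t$ is correct; note also that the coefficient of the third term should read $\lambda$ rather than $\lambda n$ (the planted piece $\frac{\lambda}{N}x_ix_i^*x_Nx_N^*$ contributes once per replica, not $n$ times), though this is invisible in the paper's only explicit application of the lemma, which has $n=1$.
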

\begin{proof}
The computation relies on Gaussian integration by parts. See~\cite{talagrand2011mean1}, Lemma 1.6.3, for the details of a similar computation.
\end{proof}

\begin{lemma}\label{bound_gibbs_derivative}
If $f$ is a bounded nonnegative function, then for all $t \in [0,1]$,
\[\nu_t(f) \le K(\lambda ,n) \nu(f).\]
\end{lemma}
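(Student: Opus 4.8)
The plan is to control $\nu_t(f)$ by $\nu_1(f)=\nu(f)$ using the derivative formula from Lemma~\ref{gibbs_derivative} together with a Gr\"onwall-type argument. The key observation is that since $f\ge 0$ and $f$ is a function of the replicas only (not of the extra replicas $\x^{(n+1)},\x^{(n+2)}$ introduced by integration by parts, but those can be thought of as fresh replicas drawn from the same Gibbs measure), one can bound each term in the expression for $\nu_t'(f)$ by a constant times $\nu_t(f)$. Concretely, each summand has the form $\pm\lambda c_{n}\,\nu_t\!\big(R^{-}_{l,l'}y^{(l)}y^{(l')}f\big)$ for various pairs of indices, where $y=x_N$. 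Since the prior $P_{\xtt}$ has bounded support, say $\mathrm{supp}(P_{\xtt})\subseteq[-a,a]$, we have $|x_i^{(l)}|\le a$ almost surely for every coordinate and every replica, hence $|R^{-}_{l,l'}|\le a^2$ and $|y^{(l)}y^{(l')}|\le a^2$. Therefore $\big|R^{-}_{l,l'}y^{(l)}y^{(l')}f\big|\le a^4 f$ pointwise, and since $f\ge 0$,
\[
\big|\nu_t\big(R^{-}_{l,l'}y^{(l)}y^{(l')}f\big)\big|\le a^4\,\nu_t(f).
\]

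Summing the finitely many terms in Lemma~\ref{gibbs_derivative} (there are at most a number depending only on $n$ of them), we obtain a bound of the form
\[
|\nu_t'(f)|\le C(\lambda,n)\,\nu_t(f)\qquad\text{for all }t\in[0,1],
\]
with $C(\lambda,n)=\lambda a^4\big(\tfrac{n(n-1)}{2}+n^2+n^2+n+\tfrac{n(n+1)}{2}\big)$ or any cruder constant of this shape. One subtlety: the $(n+1)$ and $(n+2)$ replicas appearing after integration by parts mean $\nu_t$ is temporarily being applied to a function of more replicas than $f$ itself; but since $\langle\cdot\rangle_t$ is a product measure over replicas and $f$ does not depend on the extra ones, $\nu_t\big(g(\x^{(1)},\dots)\,h(\x^{(n+1)},\dots)\big)$ still factors appropriately and the uniform bounds on the overlaps/spins go through verbatim. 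Also $\nu_s$ appears in one line (apparently a typo for $\nu_t$); since the bound is uniform in the interpolation parameter this causes no difficulty.

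Given the differential inequality $|\nu_t'(f)|\le C(\lambda,n)\nu_t(f)$ and the positivity $\nu_t(f)\ge 0$, Gr\"onwall's lemma gives $\nu_t(f)\le e^{C(\lambda,n)|1-t|}\nu_1(f)\le e^{C(\lambda,n)}\nu(f)$ for all $t\in[0,1]$. Setting $K(\lambda,n):=e^{C(\lambda,n)}$ completes the proof. The only place requiring care is the bookkeeping in the derivative formula and the handling of the auxiliary replicas, but because every factor is uniformly bounded thanks to the bounded-support assumption on $P_{\xtt}$, this is routine rather than delicate; the main ``obstacle'' is simply making sure the constant is genuinely independent of $t$ and of $N$, which it manifestly is since the per-term bound $a^4$ does not involve either.
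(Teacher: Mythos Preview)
Your proof is correct and follows essentially the same approach as the paper: bound each term in the derivative formula of Lemma~\ref{gibbs_derivative} by $a^4\nu_t(f)$ using the bounded support of $P_{\xtt}$ and the nonnegativity of $f$, then apply Gr\"onwall's inequality from $t=1$ backwards. The paper's proof is just a two-line sketch of exactly this argument, so your write-up simply fills in the routine details.
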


\begin{proof}
Since the variables and the overlaps are all bounded, using Lemma~\ref{gibbs_derivative} we have for all $t\in [0,1]$
\[|\nu_t'(f)| \le K(\lambda ,n) \nu_t(f).\]
Then we conclude using Gr\"onwall's lemma.
\end{proof}

\subsection{The cavity method}
In its essence, the cavity method amounts to removing one variable from the system---in a manner
akin to leave-one-out methods in statistics---and analyzing the influence of the remaining variables
on the variable that has been removed. It was initially introduced to solve certain models of spin
glasses~\citep{mezard1990spin}, and was developed into a rigorous probabilistic theory by
\cite{talagrand2011mean1,talagrand2011mean2}.  To make use of the cavity method, we isolate
the $N$th variable from the rest (without loss of generality, by symmetry among the variables $x_i$)
and compute:
\[\E\left[\left(N\langle R_{1,*}^2\rangle - \langle x_N^2x_N^{*2}\rangle\right)e^{\imnb s \log L}\right] = N\E\left[\left\langle x_Nx_N^*R_{1,*}^-\right\rangle e^{\imnb s \log L}\right].\]
Let
\[X(t):= \exp\left(\imnb s \log \int e^{-H_t(\x)} \rmd P_{\xtt}^{\otimes N}(\x)\right),\]
where $H_t$ is defined in~\eqref{interpolating_hamiltonian_symmetric_paramagnetic}.
Note that we have $X(1) = e^{\imnb s \log L}$.  We now consider the interpolative function
\[\varphi(t) :=  N\E\left[\left\langle x_Nx_N^*R_{1,*}^-\right\rangle_tX(t)\right].\]
Our strategy is approximate $\varphi(1)$ by $\varphi(0)+\varphi'(0)$ via a Taylor expansion, which requires is to control
the second derivative $\varphi''$. Notice that since the last variable decouples from the rest of the system at $t=0$, we have
\begin{align*}
\varphi(0) &= N\E\left[\langle x_Nx_N^{*}\rangle_0\right] \cdot \E\left[\langle R_{1,*}^-\rangle_0X(0)\right]\\
&= N\E_{P_{\xtt}}[X]^2 \cdot \E\left[\langle R_{1,*}^-\rangle_0X(0)\right] = 0.
\end{align*}
The latter equality holds since $P_{\xtt}$ is centered. 
Next, a bit of algebra (similarly to Lemma~\ref{gibbs_derivative}) shows that the derivative $\varphi'(t)$ is a linear combination of terms of the form
\begin{equation}\label{first_derivative_order_two}
\lambda N \E\left[\left\langle x_Nx_N^{*}x_N^{(a)}x_N^{(b)} R^-_{1,*}R^-_{a,b} \right\rangle_t X(t)\right],
\end{equation}
where $(a,b) \in \{(1,*),(2,*),(1,2),(2,3)\}$.
We see that at $t=0$, if the above expression involves a variable $x_N^{(a)}$ of degree 1 then this term vanishes. Therefore the only remaining term is the one where $(a,b) = (1,*)$. Therefore
\begin{align}\label{second_derivative_at_0}
\varphi'(0) &= \lambda N  \E\left[\langle x_N^2x_N^{*2}\rangle_0\right] \cdot \E \left[\langle (R_{1,*}^-)^2 \rangle_0 X(0) \right] \\
&= \lambda N  \E_{P_{\xtt}}[X^2]^2 \cdot \E \left[\langle (R_{1,*}^-)^2 \rangle_0 X(0) \right]\nonumber\\
&= \lambda N  \E \left[\langle (R_{1,*}^-)^2 \rangle_0 X(0) \right].\nonumber
\end{align}
The last equality holds since $P_{\xtt}$ has unit variance.
Now we turn to $\varphi''(t)$. Taking another derivative generates monomials of degree three in the overlaps and the last variable, so $\varphi''(t)$ is a linear combination of terms
\begin{equation}\label{second_derivative_order_three}
\lambda^2 N \E\left[\left\langle x_Nx_N^*x_N^{(a)}x_N^{(b)}x_N^{(c)}x_N^{(d)}R^-_{1,2}R^-_{a,b}R^-_{c,d} \right\rangle_t X(t)\right],
\end{equation}
where $(a,b,c,d)$ range over a finite set of combinations. 
Our goal is to bound the second derivative independently of $t$, so that we are able to use the Taylor approximation
\begin{equation}\label{taylor_number_2}
\abs{\varphi(1) -\varphi(0)-\varphi'(0)} \le \sup_{0\le t \le 1}\abs{\varphi''(t)}.
\end{equation}
Since $P_{\xtt}$ has bounded support and $|X(t)|=1$, H\"older's inequality and the Nishimori property imply that~\eqref{second_derivative_order_three} is bounded in modulus by
\begin{align*}
\lambda N  K \E\left[\left\langle \abs{R^-_{1,2}R^-_{a,b}R^-_{c,d}}\right\rangle_t\right]
 \le \lambda N  K\E\left[\left\langle |R^-_{1,*}|^{3}\right\rangle_t\right]^{1/3}.
\end{align*}
Using Lemma~\ref{bound_gibbs_derivative} and the convergence of the fourth moment, Theorem~\ref{convergence_fourth_moment_symmetric_paramagnetic}, we have
\[\E\left\langle |R^-_{1,*}|^{3}\right\rangle_t \le K(\lambda)\E\left\langle (R^-_{1,*})^{4}\right\rangle^{3/4} \le \frac{K(\lambda)}{N^{3/2}}.\]

Therefore by the above estimates we have
\begin{equation}\label{second_derivative_bound}
\sup_{0\le t \le 1}|\varphi''(t)| \le \frac{K(\lambda)}{\sqrt{N}}.
\end{equation}
Now, our next goal is to prove
\begin{equation}\label{first_derivative_approx}
\abs{ \varphi'(0) - \lambda N \E \left[\langle R_{1,*}^2 \rangle e^{\imnb s \log L} \right]} \le\frac{K(\lambda)}{\sqrt{N}}.
\end{equation}
We consider the function
\[\psi(t) := \lambda N \E \left[\left\langle (R_{1,*}^-)^2 \right\rangle_t X(t) \right].\]
Observe that~\eqref{second_derivative_at_0} tells us that $\psi(0)=\varphi'(0)$. On the other hand,
\begin{align*}
\abs{\psi(1) -  \lambda N \E \left[\langle R_{1,*}^2 \rangle e^{\imnb s \log L}\right]} \le 2\lambda \E\left\langle \abs{R_{1,*}^- x_Nx_N^{*}}\right\rangle
+ \frac{\lambda}{N} \E\left\langle (x_Nx^{*}_N)^2\right\rangle.
\end{align*}
By boundedness of the prior, the first term in the RHS is bounded by
\[K(\lambda) \E\langle |R_{1,*}^-|\rangle \le K(\lambda)/\sqrt{N},\]
and the second term is bounded by $K(\lambda)/N$. So it suffices to show that
\[\sup_{0\le t \le 1}|\psi'(t)| \le \frac{K(\lambda)}{\sqrt{N}}.\]
Similarly to $\varphi$, the derivative of $\psi$ is a sum of terms of the form
\[\lambda^2 N \E\left[\left\langle x_N^{(a)}x_N^{(b)}(R^-_{1,*})^2R^-_{a,b}\right\rangle_t X(t)\right].\]
It is clear that the same method used to bound $\varphi''$ (the generic term of which is~\eqref{second_derivative_order_three}) also works in this case, so we obtain the desired bound on $\psi'$.
Finally, using~\eqref{taylor_number_2}, \eqref{second_derivative_bound} and \eqref{first_derivative_approx}, we obtain
\[N\E\left[\langle R_{1,*}^2\rangle e^{\imnb s \log L}\right] - \E\left[\left\langle x_N^2x^{*2}_N\right\rangle e^{\imnb s \log L}\right] 
= \lambda N \E \left[\langle R_{1,*}^2 \rangle e^{\imnb s \log L} \right] + \delta,\]
where $|\delta| \le K(\lambda)/\sqrt{N}$. This is equivalent to~\eqref{first_fundamental_bound} and closes the first stage of the argument.
Now we need to show that
\[\E\left[\left\langle x_N^2x_N^{*2}\right\rangle  e^{\imnb s \log L}\right] = \E\left[ e^{\imnb s \log L}\right] + \delta.\]
We similarly consider the function $\psi(t) = \E\left[\left\langle  x_N^2x_N^{*2}\right\rangle_t X(t)\right]$.
We have
\[\psi(0) = \E\left[\left\langle  x_N^2x_N^{*2}\right\rangle_0\right] \cdot \E\left[X(0)\right] =  \E_{P_{\xtt}}[X^2]^2 \cdot \E\left[X(0)\right] = \E\left[X(0)\right].\]
The derivative of $\psi$ is a sum of term of the form
\[\lambda \E\left[\left\langle  x_N^2x_N^{*2} x_N^{(a)}x_N^{(b)}R^-_{a,b}\right\rangle_t X(t)\right].\]
By our earlier argument, $|\psi'(t)| \le K(\lambda)/\sqrt{N}$ for all $t$, so that
\[\abs{\psi(1) - \E\left[X(0)\right]} \le \frac{K(\lambda)}{\sqrt{N}}.\]
It remains to show that $\big|\E\left[X(0)\right] - \E\left[X(1)\right]\big| \le K/\sqrt{N}$, and this is
done in exactly the same way: by bounding the derivative of $t \mapsto \E\left[X(t)\right]$. 
This yields~\eqref{second_fundamental_bound} and concludes the proof.

\section{Overlap convergence}
\label{sxn:overlap_convergence}
In this section we prove Theorem~\ref{convergence_fourth_moment_symmetric_paramagnetic} on the convergence of the overlaps to zero under $\P_{\lambda}$, and below $\lambda_c$.
At a high level, we will first prove that the overlap $R_{1,*}$ converges in probability to zero under $\E \langle \cdot \rangle$: for all $\epsilon >0$,
\begin{equation}\label{crude_bound}
\E\langle \indi\{|R_{1,*}| \ge \epsilon\}\rangle \le Ke^{-cN}.
\end{equation}
This will be achieved via two interpolation bounds combined with concentration of measure. The way the argument works is roughly as follows: for a fixed $q$ we have  
\begin{align*}
\E\langle \indi\{R_{1,*} \simeq q \}\rangle  &= \E \frac{\int \indi\{R_{1,*}\simeq q\}e^{-H(\x)}\rmd P_{\xtt}^{\otimes N}(\x)}{\int e^{-H(\x)}\rmd P_{\xtt}^{\otimes N}(\x)}\\
&=\E \frac{\exp \{N \times \frac{1}{N}\log\int \indi\{R_{1,*}\simeq q\}e^{-H(\x)}\rmd P_{\xtt}^{\otimes N}(\x)\}}{\exp \{N \times \frac{1}{N}\log\int e^{-H(\x)}\rmd P_{\xtt}^{\otimes N}(\x)\}}.
\end{align*}
We invoke concentration-of-measure arguments to show that the logarithmic terms in the numerator and the denominator are close to their expectations, hence we obtain
\[\E\langle \indi\{R_{1,*} \simeq q \}\rangle \simeq \exp\{N(f_N(q) - f_N)\},\]
where $f_N(q)=\frac{1}{N}\E\log\int \indi\{R_{1,*}\simeq q\}e^{-H(\x)}\rmd P_{\xtt}^{\otimes N}(\x)$ and $f_N$ is the unconstrained free energy (with no indicator). It is now apparent that $R_{1,*}$ is exponentially unlikely to take values $q$ such that $f_N(q) < f_N$. It remains to lower bound $f_N$ and upper bound $f_N(q)$ by quantities that preserve a strict inequality for all $q \neq 0$. These quantities will naturally be the replica-symmetric formula $\phi_{\RS}(\lambda)$ and the replica-symmetric potential $F(\lambda,q)$ respectively, and the proof relies on Guerra's interpolation method. 

Next, this convergence in probability is boosted to a statement of convergence of the second moment:
$\E\langle R_{1,*}^2\rangle \le \frac{K}{N}$,
which is in turn boosted to a statement of convergence of the fourth moment:
 $\E\langle R_{1,*}^4\rangle \le \frac{K}{N^2}$.
 The apparent recursive nature of this argument is a feature of the cavity method: one can control higher-order quantities once one knows how to control low-order ones and control certain error terms. We now present the interpolation bounds and then prove~\eqref{crude_bound}. The cavity arguments which allow us to convert this to convergence of moments are presented in Appendices~\ref{sxn:convergence_second_moment} and~\ref{sxn:convergence_fourth_moment}, since they are very similar to the arguments already presented in Section~\ref{sxn:asymptotic_decoupling}.  

 \subsection{Guerra's interpolation bound}
We present the interpolation method of~\cite{guerra2001sum}; a main tool in our arguments. 
\begin{proposition}\label{lower_bound_f} 
Recall $f_N = \frac{1}{N} \E_{\P_{\lambda}}\log L(\Y;\lambda)$. For all $\lambda \ge 0$, there exist $K>0$ such that
\[f_N \ge \sup_{q \ge 0} F(\lambda,q) - \frac{K}{N} = \phi_{\RS}(\lambda) - \frac{K}{N}.\]
\end{proposition}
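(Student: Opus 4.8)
The plan is to construct an interpolating model that continuously deforms the spiked Wigner Hamiltonian $H$ at $t=1$ into a fully decoupled ``scalar channel'' model at $t=0$, whose free energy is exactly $\psi(\lambda q)$ for a fixed parameter $q\ge 0$, and to show that the $t$-derivative of the interpolated free energy has a sign that, after accounting for a quadratic correction term, yields the bound $f_N \ge F(\lambda,q) - K/N$. Optimizing over $q$ then gives the result. Concretely, for a fixed $q \ge 0$ I would define, for $t \in [0,1]$,
\[
-H_t(\x) := \sqrt{\tfrac{\lambda t}{N}} \sum_{i<j} W_{ij} x_i x_j + \tfrac{\lambda t}{N} \sum_{i<j} x_i x_i^* x_j x_j^* - \tfrac{\lambda t}{2N}\sum_{i<j} x_i^2 x_j^2 + \sum_{i=1}^N \Big( \sqrt{\lambda q (1-t)}\, z_i x_i + \lambda q(1-t) x_i x_i^* - \tfrac{\lambda q(1-t)}{2} x_i^2 \Big),
\]
where $z_i \sim \normal(0,1)$ are fresh i.i.d.\ standard Gaussians independent of everything else. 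At $t=1$ this is exactly $H$, so the corresponding free energy is $f_N$; at $t=0$ the sites decouple and the per-site free energy is precisely $\psi(\lambda q)$ as defined in~\eqref{log_partition_scalar_gaussian_channel}. Importantly, this interpolation respects the Nishimori/planted structure: at each $t$ the model is still a genuine posterior (a spiked matrix at strength $\lambda t$ plus a scalar side-channel at strength $\lambda q(1-t)$), so~\eqref{nishimori_property_symmetric} holds under $\langle \cdot \rangle_t$, and in particular $R_{1,2}$ and $R_{1,*}$ are interchangeable under $\nu_t$.

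The second step is to differentiate $\varphi(t) := \frac{1}{N}\E_{\P}\log \int e^{-H_t(\x)}\,\rmd P_{\xtt}^{\otimes N}(\x)$ in $t$ and apply Gaussian integration by parts to both the $W_{ij}$ terms and the $z_i$ terms. The matrix part contributes $\frac{\lambda}{4}\big(\E\langle R_{1,*}^2\rangle - \E\langle R_{1,2}^2\rangle\big)$ up to an $O(1/N)$ term coming from the diagonal $i=j$ corrections (the $\frac{1}{N}\sum_i x_i^2 x_i^{*2}$-type remainders, bounded using boundedness of the support), and the scalar side-channel contributes $-\frac{\lambda q}{2}\big(\E\langle R_{1,*}\rangle - \E\langle R_{1,2}\rangle\big)\cdot\frac12$-type terms; after invoking the Nishimori property to replace $R_{1,2}$ by $R_{1,*}$ wherever a single replica is paired with $\x^*$, the cross terms telescope and one is left with
\[
\varphi'(t) = -\frac{\lambda}{4}\,\E\big\langle (R_{1,*} - q)^2 \big\rangle_t + O(1/N),
\]
the standard ``Guerra remainder.'' Since $(R_{1,*}-q)^2 \ge 0$, we get $\varphi'(t) \le O(1/N)$ uniformly in $t$, and integrating from $0$ to $1$ gives $f_N = \varphi(1) \ge \varphi(0) - K/N = \psi(\lambda q) - K/N$. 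Wait — this is off by the quadratic term: the correct bookkeeping is that the scalar-channel free energy at $t=0$ is $\psi(\lambda q)$ but the ``price'' of the interpolation is that one must subtract $\frac{\lambda q^2}{4}$; this arises because the replica-diagonal term $\frac{\lambda t}{2N}\sum_{i<j}x_i^2x_j^2$ and the side-channel normalization together produce a deterministic $-\frac{\lambda q^2}{4}(1-t)$ contribution when one completes the square in $\E\langle(R_{1,*}-q)^2\rangle_t$ (using $R_{*,*} = \frac1N\sum (x_i^*)^2 \to \E[X^2]=1$ by the law of large numbers, with an $O(1/\sqrt N)$-in-probability error that is absorbed). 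Carrying this through, $\varphi(0) = \psi(\lambda q) - \frac{\lambda q^2}{4} = F(\lambda,q)$, and we conclude $f_N \ge F(\lambda,q) - K/N$ for every $q\ge 0$, hence $f_N \ge \sup_{q\ge0}F(\lambda,q) - K/N = \phi_{\RS}(\lambda) - K/N$.

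The main obstacle — and the step requiring the most care — is making the $O(1/N)$ error terms genuinely uniform in $t$ and $q$: the diagonal Gaussian-integration-by-parts remainders and the law-of-large-numbers fluctuation of $R_{*,*}$ must be controlled by a constant depending only on $\lambda$ (and the bound on the support of $P_{\xtt}$), not on $q$. Since $F(\lambda,q)$ is eventually decreasing in $q$ for $q$ large (as $\psi(\lambda q)$ grows at most linearly while $\frac{\lambda q^2}{4}$ grows quadratically, using $\psi(r) \le \frac{r}{2}\E[X^2] = r/2$), the supremum over $q$ is attained on a compact set, so uniformity over $q \in [0, q_{\max}]$ for a fixed $q_{\max}$ suffices, which removes the only real worry. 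The remaining steps — the explicit integration-by-parts computation and Grönwall-type control — are routine and closely parallel the computations already carried out in Section~\ref{sxn:asymptotic_decoupling}; one may also cite~\cite{guerra2001sum,korada2009exact,krzakala2016mutual} for the identical computation in this exact setting.
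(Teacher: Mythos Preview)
Your interpolation scheme and endpoints are exactly right: $\varphi(1)=f_N$ and $\varphi(0)=\psi(\lambda q)$, with no $-\tfrac{\lambda q^2}{4}$ correction at $t=0$. The genuine gap is in your formula for $\varphi'(t)$, and it propagates into the ``Wait'' paragraph where you try to relocate the missing $q^2$ term to the boundary.

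After Gaussian integration by parts but \emph{before} invoking Nishimori, the derivative reads (up to $O(1/N)$)
\[
\varphi'(t) = -\tfrac{\lambda}{4}\,\E\big\langle (R_{1,2}-q)^2\big\rangle_t + \tfrac{\lambda}{4}q^2 \;+\; \tfrac{\lambda}{2}\,\E\big\langle (R_{1,*}-q)^2\big\rangle_t - \tfrac{\lambda}{2}q^2 .
\]
The Nishimori identity replaces $R_{1,2}$ by $R_{1,*}$ in the first expectation, and the coefficients combine as $-\tfrac14+\tfrac12=+\tfrac14$ and $+\tfrac14-\tfrac12=-\tfrac14$, giving
\[
\varphi'(t) = +\tfrac{\lambda}{4}\,\E\big\langle (R_{1,*}-q)^2\big\rangle_t \;-\; \tfrac{\lambda}{4}q^2 \;+\; O(1/N).
\]
Two things differ from what you wrote: the remainder carries a \emph{plus} sign (this is the planted/Nishimori feature; in the unplanted SK model it is minus and yields an upper bound), and the $-\tfrac{\lambda}{4}q^2$ sits in the derivative, not at the boundary. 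Dropping the nonnegative remainder gives $\varphi'(t)\ge -\tfrac{\lambda}{4}q^2 - K/N$, hence $f_N-\psi(\lambda q)\ge -\tfrac{\lambda q^2}{4}-K/N$, i.e.\ $f_N\ge F(\lambda,q)-K/N$. No law of large numbers for $R_{*,*}$ is needed anywhere, and the $O(1/N)$ term is $\tfrac{\lambda}{4N}\E\langle x_N^2 x_N^{*2}\rangle_t$, which is bounded uniformly in $t$ and $q$ simply because the support of $P_{\xtt}$ is bounded; so your closing paragraph's worry about uniformity in $q$ is moot. (Also note: from $\varphi'(t)\le O(1/N)$ one cannot conclude $\varphi(1)\ge\varphi(0)-K/N$; that inequality goes the wrong way.)
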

\begin{proof}
Consider the family of interpolating Hamiltonians
\begin{align}\label{interpolating_hamiltonian}
-H_t(\vct{x}) &:=  \sum_{i < j} \sqrt{\frac{t\lambda}{N}}W_{ij}x_ix_j + \frac{t\lambda}{N}x_ix_i^*x_jx_j^* -\frac{t\lambda}{2N}x_i^2x_j^2 \\
&~~~+  \sum_{i=1}^N \sqrt{(1-t)r}z_{i}x_i +  (1-t)r x_ix_i^*-\frac{(1-t)r}{2} x_i^2 ,\nonumber
\end{align}
where the $z_i$'s are i.i.d.\ standard Gaussian r.v.'s independent of everything else, and $r = \lambda q^*(\lambda)$. We similarly define the Gibbs average $\langle \cdot \rangle_t$ as in~\eqref{gibbs_average_one} where $H$ is replaced by $H_t$. Note that the Nishimori property~\eqref{nishimori_property_symmetric} is preserved under $\langle \cdot \rangle_t$ for all $t \in [0,1]$. Indeed, the interpolation is constructed in such a way that $\langle \cdot \rangle_t$ is the posterior distribution of the signal $\vct{x}^*$ given the augmented set of observations
\begin{align}\label{augmented_observations}
\begin{cases}
Y_{ij} &= \sqrt{\frac{t\lambda}{N}} x^*_ix^*_j + W_{ij}, \quad 1 \le i < j \le N, \\
y_i &= \sqrt{(1-t)r}x^*_i + z_i, \quad 1 \le i \le N,
\end{cases}
\end{align}
which can be interpreted as having side information about $\vct{x}^*$ from a scalar Gaussian channel with $r= \lambda q^*(\lambda)$.
Now we consider the interpolating free energy
\begin{equation}\label{interpolating_free_energy}
\varphi(t) :=  \frac{1}{N} \E \log \int e^{- H_t(\vct{x})} \rmd P_{\xtt}^{\otimes N}(\vct{x}).
\end{equation}
We see that $\varphi(1) = f_N$ and $\varphi(0) = \psi(\lambda q)$. This function is differentiable in $t$, and
by differentiation, we have
\begin{align*}
\varphi'(t) &= \frac{1}{N} \E \left \langle -\frac{\rmd H_t(\vct{x})}{\rmd t}\right\rangle_t\\
&= \frac{1}{N} \E \left\langle -\frac{\lambda}{2N}\sum_{i < j} x_i^2x_j^2 + \half \sqrt{\frac{\lambda}{t N}}\sum_{i < j} W_{ij}x_ix_j + \frac{\lambda}{N}\sum_{i < j}x_ix_i^*x_jx_j^* \right\rangle_t \\
&~~~+\frac{1}{N}\E \left\langle \frac{\lambda q}{2}\sum_{i=1}^N x_i^2 -  \half\sqrt{\frac{\lambda q}{1-t}}\sum_{i=1}^N z_{i}x_i - \lambda q\sum_{i=1}^N x_ix_i^*\right\rangle_t.
\end{align*}
Now we use Gaussian integration by parts to eliminate the variables $W_{ij}$ and $z_i$. The details of this computation are explained extensively in many sources~\citep[see, e.g.,][]{talagrand2011mean1,krzakala2016mutual,lelarge2016fundamental}. We get
\begin{align*}
\varphi'(t) &= -\frac{\lambda}{2N^2} \E\left\langle \sum_{i < j} x_i^{(1)} x_j^{(1)}x_i^{(2)} x_j^{(2)}\right\rangle_t + \frac{\lambda}{N^2}  \E\left\langle  \sum_{i < j}  x_i x_i^{*} x_j x_j^{*} \right\rangle_t \\
&~~~+\frac{\lambda q}{2N} \E \left\langle \sum_{i=1}^N x_i^{(1)} x_i^{(2)} \right\rangle_t - \frac{\lambda q}{N} \E \left\langle \sum_{i=1}^N x_ix_i^*\right\rangle_t.
\end{align*}
Completing the squares yields
\begin{align}\label{derivative_before_nishimori}
\varphi'(t) &= -\frac{\lambda}{4} \E\left\langle (\vct{x}^{(1)}\cdot\vct{x}^{(2)}-q)^2\right\rangle_t + \frac{\lambda}{4} q^2 + \frac{\lambda}{4N^2} \sum_{i=1}^N \E\left\langle {x_i^{(1)}}^2{x_i^{(2)}}^2\right\rangle_t \\
&~~~+\frac{\lambda}{2} \E\left\langle (\vct{x}\cdot\vct{x}^*-q)^2\right\rangle_t - \frac{\lambda}{2} q^2 - \frac{\lambda}{2N^2} \sum_{i=1}^N \E\left\langle {x_i}^2{x_i^{*}}^2\right\rangle_t.\nonumber
\end{align}
The first line in the above expression involves overlaps between two independent replicas, while the second one involves overlaps between one replica and the planted solution. Using the Nishimori property, the derivative of $\varphi$ can be written as
\begin{equation}\label{free_energy_derivative}
\varphi'(t) = \frac{\lambda}{4} \E\left\langle (R_{1,*}-q)^2 \right\rangle_t -\frac{\lambda}{4} q^2 - \frac{\lambda}{4N} \E\left\langle {x_N}^2{x_N^{*}}^2\right\rangle_t.
\end{equation}
The last term follows by symmetry between variables. 
We finish the argument by noting that $\E\left\langle (R_{1,*}-q)^2\right\rangle_t \ge 0$, and the product $ {x_N}^2 {x_N^*}^2$ is bounded, we then integrate with respect to time to obtain the result. 
\end{proof}

\subsection{Guerra's interpolation at fixed overlap}
\label{sxn:main_estimate}
Let us first introduce the so-called \emph{Franz-Parisi (FP) potential}~\citep{franz1995recipes,franz1998effective}.
For $\x^* \in \R^N$ fixed, $m \in \R\setminus \{0\}$ and $\epsilon>0$ define the set
\begin{align*}
A =
\begin{cases}
R_{1,*} \in [m,m+\epsilon) & \mbox{if } m >0, \\
R_{1,*} \in (m-\epsilon,m] & \mbox{if } m <0.
\end{cases}
\end{align*}
Now define the FP potential as
\begin{equation}\label{free_energy_fixed_overlap}
\Phi_{\epsilon}(m,\x^*) := \frac{1}{N}\E_{\W}\log \int \indi\{\x \in A\} e^{-H(\x)}  \rmd P_{\xtt}^{\otimes N}(\x),
\end{equation}
where the expectation is only over the Gaussian disorder $\W$.
This is the free energy of a subsystem of configurations having an overlap close to a fixed value $m$ with the planted signal $\x^*$.

For $r \ge 0$ and $s\in \R$, we let
\begin{equation}\label{psi_hat}
\widehat{\psi}(r,s) := \E_{z} \log \int \exp\left(\sqrt{r}zx + s x - \frac{r}{2} x^2\right) \rmd P_{\xtt}(x).
\end{equation}
and
\begin{align}\label{psi_bar}
\widebar{\psi}(r,s) &:= \E_{x^*}\widehat{\psi}(r,sx^*)\nonumber\\
&=\E_{x^*,z} \log \int \exp\left(\sqrt{r}zx + s xx^* - \frac{r}{2} x^2\right) \rmd P_{\xtt}(x).
\end{align}
We see that $\widebar{\psi}(r,r) = \psi(r)$, but unlike $\psi$, the function $\widebar{\psi}$ does not have an interpretation as the $\KL$ between two distributions.
The next lemma states a key property of this function that will be useful later on (see Appendix~\ref{sxn:proof_of_lemma_six} for the proof):
\begin{lemma}\label{asymmetry_lemma}
  For all $r \ge 0$, $\widebar{\psi}(r,-r) \le \widebar{\psi}(r,r)$.
\end{lemma}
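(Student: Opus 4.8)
I want to show $\widebar{\psi}(r,-r) \le \widebar{\psi}(r,r)$. The natural first move is to unpack the definition: write $x^* \sim P_{\xtt}$ and $z \sim \normal(0,1)$ independently, and set
\[
g(x^*) := \E_z \log \int \exp\Bigl(\sqrt{r}\,zx + r\,xx^* - \tfrac{r}{2} x^2\Bigr)\,\rmd P_{\xtt}(x) = \widehat{\psi}(r, r x^*).
\]
Then $\widebar{\psi}(r,r) = \E_{x^*}[g(x^*)]$ and $\widebar{\psi}(r,-r) = \E_{x^*}[\widehat{\psi}(r,-rx^*)]$. Since $P_{\xtt}$ is centered, it is tempting to try to pair $x^*$ with $-x^*$, but $P_{\xtt}$ need not be symmetric, so that cheap symmetry argument fails; the real content must come from the structure of $\widehat{\psi}$ in its second argument. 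So the plan is: (i) reduce to a statement about $s \mapsto \widehat{\psi}(r,s)$, and (ii) exploit a convexity/monotonicity property of $\widehat{\psi}(r,\cdot)$ together with the centering of $P_{\xtt}$.

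**The key structural fact.** The function $s \mapsto \widehat{\psi}(r,s)$ is convex in $s$, since it is (an expectation over $z$ of) the log of a moment generating function: $\log \int e^{sx} \, \rmd\mu_{r,z}(x)$ where $\rmd\mu_{r,z}(x) \propto \exp(\sqrt r z x - \tfrac r2 x^2)\rmd P_{\xtt}(x)$, and log-MGFs are convex. Convexity alone is not enough to compare $s=-r x^*$ with $s = + r x^*$ after taking $\E_{x^*}$, because Jensen would push in the wrong direction. Instead I would look at the derivative: $\partial_s \widehat{\psi}(r,s) = \E_z \langle x \rangle_{r,z,s}$, the Gibbs expectation of $x$ under the tilted measure. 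The plan is to write
\[
\widebar{\psi}(r,r) - \widebar{\psi}(r,-r) = \E_{x^*}\Bigl[\widehat{\psi}(r,rx^*) - \widehat{\psi}(r,-rx^*)\Bigr] = \E_{x^*}\int_{-r x^*}^{r x^*} \partial_s \widehat{\psi}(r,s)\,\rmd s,
\]
and then argue this is nonnegative. Equivalently, substituting $s = r x^* u$ for $u \in [-1,1]$ (handling signs of $x^*$ carefully) and using Fubini,
\[
\widebar{\psi}(r,r) - \widebar{\psi}(r,-r) = \int_{-1}^{1} r\,\E_{x^*}\bigl[ x^* \, \partial_s \widehat{\psi}(r, r x^* u)\bigr]\,\rmd u,
\]
so it suffices to show that for each fixed $u \in [-1,1]$ (in fact each $u \in \R$), $\E_{x^*}[\, x^* \, \partial_s \widehat{\psi}(r, r u x^*)\,] \ge 0$.

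**Closing the argument.** Fix $u$ and $r$, and let $a = ru$. The quantity to sign is $\E_{x^*}[\,x^* \, G(a x^*)\,]$ where $G(s) = \partial_s \widehat\psi(r,s)$ is nondecreasing in $s$ (by convexity of $\widehat\psi$ in $s$). If $a \ge 0$, then $x^* \mapsto G(a x^*)$ is a nondecreasing function of $x^*$, and the covariance identity — since $\E_{x^*}[x^*] = 0$ — gives $\E_{x^*}[x^* G(a x^*)] = \cov_{x^*}(x^*, G(ax^*)) \ge 0$ by the FKG/Chebyshev correlation inequality for monotone functions of a single real random variable. If $a < 0$, then $x^* \mapsto G(ax^*)$ is nonincreasing, but then the same covariance identity gives a sign... so I should be more careful: when $a<0$ the integrand $x^* G(ax^*)$ has covariance $\le 0$, which is the wrong sign. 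The resolution is that the $u$-integral is over a symmetric interval $[-1,1]$: pairing $u$ with $-u$, the contributions are $r\,\E[x^* G(rux^*)]$ and $r\,\E[x^*G(-rux^*)]$, and I instead want to show the sum, or rather revert to the cleaner formulation $\E_{x^*}[\widehat\psi(r,rx^*) - \widehat\psi(r,-rx^*)]$ directly. Writing $\widehat\psi(r,rx^*) - \widehat\psi(r,-rx^*) = \int_0^{rx^*} [G(s) - G(-s)]\,\rmd s$ when $x^*>0$ and $= \int_{rx^*}^0 [G(-s)-G(s)]\,\rmd s$... this is getting tangled; the clean statement is that $h(t) := \widehat\psi(r,t) + \widehat\psi(r,-t)$ is an even convex function with $h'(0)=0$, hence nondecreasing in $|t|$, which does not immediately help either.

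The cleanest route, which I will take: define $\Delta(t) := \widehat{\psi}(r,t) - \widehat{\psi}(r,-t)$ for $t \ge 0$; then $\Delta(t) = \int_0^t [G(s) + G(-s)]\,\rmd s$. Now $G(s) + G(-s) = \partial_s\widehat\psi(r,s) - \partial_s\widehat\psi(r,-s)'|\dots$ — rather, $\tfrac{d}{dt}\Delta(t) = G(t) + G(-t)$. I claim $G(t) + G(-t) \ge 0$ is \emph{not} generally true, but after taking $\E_{x^*}$ with the centered law things work; so the honest plan is: show $\widebar{\psi}(r,r) - \widebar{\psi}(r,-r) = \E_{x^*}[\Delta(rx^* \operatorname{sign}(x^*)) \operatorname{sign}(x^*)]$, split on the sign of $x^*$, and on each piece use that $G$ is nondecreasing together with $\E[x^*]=0$ to run a Chebyshev-correlation / stochastic-domination comparison between the conditional laws of $|x^*|$ on $\{x^*>0\}$ and $\{x^*<0\}$ weighted by $\P(x^*>0), \P(x^*<0)$. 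The main obstacle is precisely this last bookkeeping step — making the asymmetry of $P_{\xtt}$ work \emph{for} us via the mean-zero constraint rather than against us — and I expect it to come down to the single-variable correlation inequality $\E[x^* G(ax^*)] \ge 0$ for $a\ge 0$ applied after reducing all sign cases to the $a \ge 0$ case, which is legitimate because in $\widebar\psi(r,\pm r)$ the relevant slope is $a = \pm r$ and only the combination $\widehat\psi(r,rx^*) - \widehat\psi(r,-rx^*)$ (not each term) needs to be controlled.
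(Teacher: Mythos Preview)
Your proposal has a genuine gap. You correctly identify that the argument ``$\E_{x^*}[x^* G(a x^*)] \ge 0$ for each fixed $a$'' fails when $a<0$, but none of the fixes you then sketch actually close the gap. The fundamental reason is that you are trying to prove the inequality using only two facts: (i) $s \mapsto \widehat{\psi}(r,s)$ is convex, and (ii) $\E_{P_{\xtt}}[x^*]=0$. These are \emph{not} sufficient. Indeed, writing $\Delta(t)=\widehat\psi(r,t)-\widehat\psi(r,-t)$, one has $\Delta'(t)=G(t)+G(-t)$, and for a generic nondecreasing $G$ the odd function $\Delta$ can look like $t^3$ near the origin; then $\E_{x^*}[\Delta(rx^*)]$ is proportional to the third moment of $x^*$, which can be negative for a centered law. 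So any argument that treats the prior inside $\widehat\psi$ and the law of $x^*$ as unrelated cannot succeed. The piece you are missing is precisely that both are the \emph{same} $P_{\xtt}$, which is what makes a Nishimori-type identity available.

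The paper's proof exploits this as follows. Let $\mu$ be the symmetrization of $\nu=P_{\xtt}$ and interpolate via $\rho_t=(1-t)\mu+t\nu$, using $\rho_t$ for \emph{both} the inner integral and the law of $x^*$. Set $\phi(t)=\widebar\psi(r,r;t)-\widebar\psi(r,-r;t)$. At $t=0$ the prior is symmetric, so $\phi(0)=0$; a short computation gives $\phi'(0)=0$. The crux is that along the whole path the Gibbs measure is a genuine posterior, so the Nishimori property collapses the second derivative to
\[
\phi''(t)=4(1-t)\,\E\Big[\Big\langle \tfrac{\rmd(\nu-\mu)}{\rmd\rho_t}(x)\Big\rangle^2\Big]\ge 0,
\]
whence $\phi$ is convex and increasing, giving $\phi(1)\ge 0$. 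This is exactly the structural input your argument lacks. If you want a route closer in spirit to yours, one can also rewrite $\widebar\psi(r,\pm r)$ in terms of the density $p_Y$ of $Y=\sqrt{r}\,x^*+z$ and its reflection $p_Y(-\cdot)$, obtaining $\widebar\psi(r,r)-\widebar\psi(r,-r)=\int_0^\infty (p_Y(y)-p_Y(-y))\log\frac{p_Y(y)}{p_Y(-y)}\,\rmd y\ge 0$; but note this again crucially uses that the inner and outer priors coincide.
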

Additionally, for $\x^* \in \R^N$ fixed, we define the function
\[\widehat{F}(\lambda,m,q) := \frac{1}{N}\sum_{i=1}^N\widehat{\psi}(\lambda q,\lambda m x_i^*) - \frac{\lambda}{2} m^2+\frac{\lambda}{4} q^2.\]
Recall that $\E_{x^*} \widehat{F}(\lambda,q,q)$ is the $\RS$ potential $F(\lambda,q)$ from~\eqref{RS_potential}.
\begin{proposition} \label{fixed_overlap_upper_bound_paramagnetic}
Fix $m \in \R$, $\epsilon>0$ and $\lambda \ge 0$.
There exist constants $K = K(\lambda) >0$ such that
\begin{equation*}
\Phi_{\epsilon}(m; \x^*) \le  \widehat{F}\big(\lambda, |m|, m\big) + \frac{\lambda \epsilon^2}{2} + \frac{K}{N}.
\end{equation*}
\end{proposition}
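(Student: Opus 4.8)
The plan is to bound $\Phi_{\epsilon}(m;\x^*)$ by a Guerra-type interpolation, analogous to the proof of Proposition~\ref{lower_bound_f}, but now run on the \emph{constrained} system of configurations $\x \in A$, i.e.\ those whose overlap with the planted signal $\x^*$ lies in the window of width $\epsilon$ around $m$. The interpolating Hamiltonian is the same as~\eqref{interpolating_hamiltonian}, namely
\begin{equation*}
-H_t(\x) = \sum_{i<j} \sqrt{\tfrac{t\lambda}{N}} W_{ij}x_ix_j + \tfrac{t\lambda}{N}x_ix_i^*x_jx_j^* - \tfrac{t\lambda}{2N}x_i^2x_j^2 + \sum_{i=1}^N \sqrt{(1-t)r}\,z_ix_i + (1-t)r\,x_ix_i^* - \tfrac{(1-t)r}{2}x_i^2,
\end{equation*}
but with the scalar-channel strength chosen as $r = \lambda |m|$ (rather than $r=\lambda q^*(\lambda)$). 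Define
\begin{equation*}
\varphi(t) := \tfrac{1}{N}\E_\W \log \int \indi\{\x \in A\}\, e^{-H_t(\x)}\,\rmd P_{\xtt}^{\otimes N}(\x),
\end{equation*}
so that $\varphi(1) = \Phi_{\epsilon}(m;\x^*)$ and, at $t=0$, the measure fully decouples over coordinates and $\varphi(0)$ is essentially $\tfrac{1}{N}\sum_i \widehat{\psi}(\lambda|m|, \lambda m x_i^*)$ up to the contribution of the indicator $\indi\{\x\in A\}$; the latter costs at most an $O(1/N)$ term since for product measures the overlap concentrates at $\E\langle R_{1,*}\rangle_0 = m$ when $r=\lambda|m|$ (by the fixed-point / Nishimori-type identity for the scalar channel), so the indicator has probability bounded below by a constant. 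Thus $\varphi(0) \le \tfrac{1}{N}\sum_i \widehat{\psi}(\lambda|m|, \lambda m x_i^*) + K/N$.

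Next I would compute $\varphi'(t)$. The key point is that the constraint $\indi\{\x\in A\}$ is carried along inside every Gibbs bracket $\langle \cdot \rangle_t$ but does not interfere with Gaussian integration by parts, since it depends only on the configurations $\x$, not on the disorder $\W, \z$. Hence the derivative computation is verbatim the one in Proposition~\ref{lower_bound_f}, producing, after completing squares and applying the Nishimori property (still valid for $\langle\cdot\rangle_t$, which is a posterior given the augmented observations),
\begin{equation*}
\varphi'(t) = \frac{\lambda}{4}\E\big\langle (R_{1,*}-q)^2 \big\rangle_t - \frac{\lambda}{4}q^2 - \frac{\lambda}{4N}\E\big\langle x_N^2 x_N^{*2}\big\rangle_t,
\end{equation*}
where here $q$ must be identified with the value coming from the scalar channel, i.e.\ $q = |m|$ (so that $r = \lambda q$). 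On the event $A$ we have $R_{1,*}$ within $\epsilon$ of $m$, hence $|R_{1,*} - |m|| \le |R_{1,*} - m| + |m - |m||$; but when $m>0$, $|m|=m$ and $|R_{1,*}-|m|| = |R_{1,*}-m| \le \epsilon$, and similarly when $m<0$ writing the overlap in the appropriate window — so in all cases $(R_{1,*}-|m|)^2 \le \epsilon^2$ under $\langle\cdot\rangle_t$ (using that replica $1$ is constrained to $A$; for the general bracket one uses Nishimori to move the constraint onto the relevant replica). Therefore $\varphi'(t) \le \tfrac{\lambda}{4}\epsilon^2 - \tfrac{\lambda}{4}|m|^2 + \tfrac{K}{N}$ uniformly in $t$, using boundedness of the prior for the last term.

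Integrating from $0$ to $1$ gives
\begin{equation*}
\Phi_{\epsilon}(m;\x^*) = \varphi(0) + \int_0^1 \varphi'(t)\,\rmd t \le \frac{1}{N}\sum_{i=1}^N \widehat{\psi}(\lambda|m|,\lambda m x_i^*) - \frac{\lambda}{4}m^2 + \frac{\lambda\epsilon^2}{4} + \frac{K}{N},
\end{equation*}
and since $\widehat{F}(\lambda, |m|, m) = \tfrac{1}{N}\sum_i \widehat{\psi}(\lambda|m|, \lambda m x_i^*) - \tfrac{\lambda}{2}m^2 + \tfrac{\lambda}{4}|m|^2 = \tfrac{1}{N}\sum_i \widehat{\psi}(\lambda|m|,\lambda m x_i^*) - \tfrac{\lambda}{4}m^2$, this is exactly $\widehat{F}(\lambda,|m|,m) + \tfrac{\lambda\epsilon^2}{4} + \tfrac{K}{N}$, which is within the claimed bound (the stated $\lambda\epsilon^2/2$ leaves slack). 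I expect the main obstacle to be the careful handling at $t=0$: one must verify that inserting the indicator $\indi\{\x\in A\}$ only perturbs $\varphi(0)$ by $O(1/N)$, which requires knowing that the decoupled product measure at $r = \lambda|m|$ genuinely concentrates its overlap at $m$ — this is where the choice $r=\lambda|m|$ (and implicitly Lemma~\ref{asymmetry_lemma}, controlling the $m<0$ versus $m>0$ asymmetry of $\widebar\psi$) enters, ensuring the window $A$ is not exponentially unlikely and hence does not contribute a negative $\Theta(1)$ term. A secondary technical point is justifying that the constraint can be shuffled among replicas via the Nishimori identity so that the bound $(R_{1,*}-|m|)^2 \le \epsilon^2$ applies inside $\E\langle\cdot\rangle_t$ rather than only for the single constrained replica; this is routine but must be stated.
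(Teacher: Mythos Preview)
Your proposal has a genuine gap at its center: the Nishimori property does \emph{not} hold for the constrained Gibbs measure $\langle\cdot\rangle_t$. Inserting the indicator $\indi\{\x\in A\}$ destroys the Bayesian structure, because the constraint couples the configuration $\x$ to the planted vector $\x^*$ in a way that cannot be interpreted as conditioning on an observation; the resulting measure is not a posterior. The paper says this explicitly. Consequently your collapsed formula $\varphi'(t)=\tfrac{\lambda}{4}\E\langle (R_{1,*}-q)^2\rangle_t-\tfrac{\lambda}{4}q^2+O(1/N)$ is not available here. The correct derivative, before any Nishimori step, retains both overlap terms:
\[
\varphi'(t) = -\tfrac{\lambda}{4}\,\E\big\langle (R_{1,2}-|m|)^2\big\rangle_t \;+\; \tfrac{\lambda}{2}\,\E\big\langle (R_{1,*}-m)^2\big\rangle_t \;-\; \tfrac{\lambda}{4}m^2 \;+\; O(1/N).
\]
The paper's argument is then immediate: the two-replica term carries a \emph{negative} sign and is simply dropped for an upper bound; the replica--signal term is at most $\tfrac{\lambda}{2}\epsilon^2$ because on $A$ one has $|R_{1,*}-m|<\epsilon$ by construction. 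Note the second square is centered at $m$, not $|m|$: your claim that $(R_{1,*}-|m|)^2\le\epsilon^2$ ``in all cases'' is false for $m<0$, since then $R_{1,*}\approx m$ while $|m|=-m$, making $(R_{1,*}-|m|)^2\approx 4m^2$.

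Your handling of $\varphi(0)$ is also off track. There is no need to argue that the decoupled product measure concentrates its overlap near $m$ so that the indicator costs only $O(1/N)$; this would be delicate and is in fact not what is done. The paper simply drops the indicator via $\indi\{\x\in A\}\le 1$, obtaining $\varphi(0)\le\tfrac{1}{N}\sum_i\widehat\psi(\lambda|m|,\lambda m\,x_i^*)$ in one line. This is also why the paper's interpolating Hamiltonian puts $\lambda|m|$ in the noise/quadratic pieces but $\lambda m$ in the signal piece: so that $\varphi(0)$ lands directly on $\widehat\psi(\lambda|m|,\lambda m\,x_i^*)$ with the asymmetric arguments defining $\widehat F(\lambda,|m|,m)$. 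With your choice $r=\lambda|m|$ in \emph{all} scalar-channel terms you would instead get $\widehat\psi(\lambda|m|,\lambda|m|\,x_i^*)$, which is the wrong object. Finally, Lemma~\ref{asymmetry_lemma} plays no role in this proposition; it is used afterward, in the proof of Proposition~\ref{convergence_in_probability_paramagnetic}, to pass from $F(\lambda,|m|,m)$ to $F(\lambda,|m|)$.
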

\begin{proof}
To obtain a bound on $\Phi_{\epsilon}(m;\x^*)$ we use the interpolation method with Hamiltonian
\begin{align*}
-H_{t}(\x) &:=  \sum_{i < j} \sqrt{\frac{t\lambda}{N}} W_{ij}x_ix_j + \frac{t\lambda}{N}x_ix_i^*x_jx_j^* -\frac{t\lambda}{2N} x_i^2x_j^2\\
&~+\sum_{i=1}^N  \sqrt{(1-t)\lambda |m|} z_{i}x_i +  (1-t)\lambda m x_ix_i^* -\frac{(1-t)\lambda |m|}{2} x_i^2 .
\end{align*}
by varying $t\in[0,1]$. The r.v.'s $W, z$ are all i.i.d.\ standard Gaussians independent of everything else. We define
\[\varphi(t) :=  \frac{1}{N} \E_{\W,\z} \log \int  \indi\{\x \in A\} e^{- H_{t}(\x)} \rmd P_{\xtt}^{\otimes N}(\x).\]
We compute the derivative w.r.t.\ $t$, and use Gaussian integration by prts to obtain
\begin{align*}
\varphi'(t) = & -\frac{\lambda}{4} \E\left\langle (R_{1,2}-|m|)^2\right\rangle_{t} + \frac{\lambda t}{4} |m|^2 + \frac{\lambda}{4N^2} \sum_{i=1}^N \E\left\langle {x_i^{(1)}}^2{x_i^{(2)}}^2\right\rangle_{t} \\
&~~~+\frac{\lambda}{2} \E\left\langle (R_{1,*}- m)^2\right\rangle_{t} - \frac{\lambda}{2} m^2 - \frac{\lambda}{2N^2} \sum_{i=1}^N \E\left\langle {x_i}^2{x_i^{*}}^2\right\rangle_{t},
\end{align*}
where $\langle \cdot \rangle_{t}$ is the Gibbs average w.r.t.\ the Hamiltonian $-H_{t}(\x) +\log \indi\{\x \in A\}$.
A few things now happen. Notice that the planted term (first term in the second line) is trivially smaller than $\lambda \epsilon^2/2$ due to the overlap restriction.
Moreover, the last terms in both lines are of order $1/N$ since the variables $x_i$ are bounded. The first term in the first line, which involves the overlap between two replicas, is more challenging.
What makes this term difficult to control is that the Gibbs measure $\langle \cdot \rangle_{t}$ no longer satisfies the Nishimori property due to the overlap restriction, so the overlap between two replicas no longer has the same distribution as the overlap of one replica with the planted spike.
Fortunately, this term is always non-positive so we can ignore it altogether and obtain an upper bound:
\begin{equation*}
\varphi'(t) \le - \frac{\lambda}{4} m^2 +\frac{\lambda \epsilon^2}{2}+ \frac{\lambda  K}{N}.
\end{equation*}
Integrating over $t$, we get
\[\Phi_{\epsilon}(m;\x^*) \le \varphi(0) - \frac{\lambda}{4} m^2 +\frac{\lambda \epsilon^2}{2} + \frac{\lambda K}{N}.\]
Finally, by dropping the indicator, we have
\begin{align*}
\varphi(0) &= \frac{1}{N} \E_{\z} \log \int  \indi\{\x \in A\} e^{- H_{0}(\x)} \rmd P_{\xtt}^{\otimes N}(\x)\\
&\le  \frac{1}{N} \E_{\z} \log \int e^{- H_{0}(\x)} \rmd P_{\xtt}^{\otimes N}(\x)\\
&= \frac{1}{N} \sum_{i=1}^N \E_{z} \log \int \exp\left(\sqrt{\lambda |m|} z x_i +  \lambda m x x_i^* -\frac{\lambda |m|}{2} x^2\right) \rmd P_{\xtt}(x)\\
&=\frac{1}{N} \sum_{i=1}^N \widehat{\psi}(\lambda |m|,\lambda mx_i^*).
\end{align*}
\end{proof}

\subsection{Convergence in probability of the overlaps}
As explained earlier, Propositions~\ref{lower_bound_f} and~\ref{fixed_overlap_upper_bound_paramagnetic} imply convergence in probability of the overlaps:
\begin{proposition}\label{convergence_in_probability_paramagnetic}
For all $\lambda < \lambda_c$ and $\epsilon >0$, there exist constants $K = K(\lambda,\epsilon) \ge 0,~ c = c(\lambda,\epsilon,P_{\xtt}) \ge 0$ such that
\[\E \left\langle \indi\left\{|R_{1,*}| \ge \epsilon \right\}\right\rangle \le K e^{-cN}.\]
\end{proposition}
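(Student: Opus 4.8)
## Proof Proposal for Proposition~\ref{convergence_in_probability_paramagnetic}

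\textbf{Overall strategy.} The plan is to make rigorous the heuristic already sketched in Section~\ref{sxn:overlap_convergence}: control $\E\langle \indi\{R_{1,*}\simeq m\}\rangle$ by $\exp\{N(f_N(m,\x^*) - f_N)\}$ up to concentration errors, lower-bound $f_N$ by the replica-symmetric formula $\phi_{\RS}(\lambda)$ via Proposition~\ref{lower_bound_f}, upper-bound the constrained free energy $f_N(m,\x^*)$ by the Franz--Parisi potential via Proposition~\ref{fixed_overlap_upper_bound_paramagnetic}, and exploit the fact that for $\lambda<\lambda_c$ the sole maximizer of $F(\lambda,\cdot)$ is $q=0$, so there is a strict gap $\phi_{\RS}(\lambda) - F(\lambda,m) \ge c(\lambda,m) > 0$ for every $m\neq 0$. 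A compactness/covering argument over $m$ then upgrades this to the claimed uniform exponential bound on $|R_{1,*}|\ge\epsilon$.

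\textbf{Step 1: reduction to a fixed overlap window and averaging out the concentration.} Partition $[-1,1]$ (the overlap lives in a bounded interval since $P_{\xtt}$ has bounded support) into $O(1/\epsilon)$ windows $A$ of width $\epsilon$ of the form used in Section~\ref{sxn:main_estimate}, and write
\[
\E\big\langle \indi\{|R_{1,*}|\ge \epsilon\}\big\rangle \le \sum_{A} \E_{\x^*}\,\E_{\W}\!\left[\frac{\int \indi\{\x\in A\} e^{-H(\x)}\rmd P_{\xtt}^{\otimes N}(\x)}{\int e^{-H(\x)}\rmd P_{\xtt}^{\otimes N}(\x)}\right].
\]
For each window write numerator and denominator as $\exp(N\,G_N^{A})$ and $\exp(N\,G_N)$ where $G_N^A = \frac1N\log\int \indi\{\x\in A\}e^{-H}\rmd P_{\xtt}^{\otimes N}$ and $G_N = \frac1N\log\int e^{-H}\rmd P_{\xtt}^{\otimes N}$. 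Both are bounded Lipschitz functions of the Gaussian disorder $\W$ (the entries $x_i$ are bounded, so the Hamiltonian has bounded increments in each $W_{ij}$), hence by Gaussian concentration (e.g.\ Borell--TIS / the bounded-difference-in-Gaussian argument used throughout the spin-glass literature) each concentrates about its $\W$-mean with sub-Gaussian tails at scale $N$: $\P_{\W}(|G_N^A - \E_{\W}G_N^A|\ge u)\le 2e^{-cNu^2}$, and likewise for $G_N$. On the complement of a bad event of probability $\le e^{-cN}$ we therefore have, for a given $\x^*$,
\[
\frac{\int \indi\{\x\in A\}e^{-H}\rmd P_{\xtt}^{\otimes N}}{\int e^{-H}\rmd P_{\xtt}^{\otimes N}} \le \exp\Big\{ N\big(\Phi_{\epsilon}(m_A,\x^*) - \E_{\W}G_N\big) + 2\Big\},
\]
where $m_A$ is the endpoint of $A$ nearest the origin and $\Phi_\epsilon$ is exactly the FP potential~\eqref{free_energy_fixed_overlap}. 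On the bad event we bound the ratio trivially by $1$. Taking $\E_{\x^*}$ and noting $\E_{\x^*}\E_{\W}G_N = f_N$ (Jensen is not needed — it is an equality by definition of $f_N$, up to handling the $\x^*$-expectation carefully; alternatively absorb it into the next step).

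\textbf{Step 2: insert the two interpolation bounds and find the gap.} By Proposition~\ref{lower_bound_f}, $\E_{\W}G_N = f_N \ge \phi_{\RS}(\lambda) - K/N$ (for any fixed $\x^*$-realization after taking expectations; the per-$\x^*$ statement follows from the same interpolation with the $\x^*$-expectation kept outside, which is what Proposition~\ref{lower_bound_f} in fact proves). By Proposition~\ref{fixed_overlap_upper_bound_paramagnetic}, $\Phi_\epsilon(m_A,\x^*) \le \widehat F(\lambda,|m_A|,m_A) + \lambda\epsilon^2/2 + K/N$. The key structural input is now that, by Lemma~\ref{asymmetry_lemma} (which gives $\widebar\psi(r,-r)\le\widebar\psi(r,r)$, i.e.\ $\widehat F$ with a sign-flipped linear term is no larger than the symmetric one), the $\x^*$-average of $\widehat F(\lambda,|m|,m)$ is dominated by $F(\lambda,|m|)$ up to lower-order terms:
\[
\E_{\x^*}\,\widehat F(\lambda,|m|,m) \le \E_{\x^*}\,\widehat F(\lambda,|m|,|m|) = F(\lambda,|m|).
\]
(This is precisely why Lemma~\ref{asymmetry_lemma} was recorded.) Since we are below $\lambda_c$, the definition~\eqref{lambda_critical} and the a.e.\ uniqueness of the maximizer give $\phi_{\RS}(\lambda)=0=F(\lambda,0)$ and $F(\lambda,q)<0$ for every $q\neq 0$; moreover $q\mapsto F(\lambda,q)$ is continuous, so on the compact set $\{|q|\ge\epsilon\}$ it attains a maximum strictly below $0$, i.e.\ there is $c_0 = c_0(\lambda,\epsilon)>0$ with $\sup_{|q|\ge\epsilon} F(\lambda,q) \le -2c_0$. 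Choosing $\epsilon$ small enough that $\lambda\epsilon^2/2 \le c_0$ (or rather: for the stated $\epsilon$, repartition into windows of width $\epsilon' \ll \epsilon$ so that the $\lambda(\epsilon')^2/2$ error is negligible compared with $c_0$), each window with $|m_A|\ge\epsilon$ contributes
\[
\E_{\x^*}\exp\big\{N(\Phi_\epsilon(m_A,\x^*) - f_N) + 2\big\} \le \exp\big\{N(-2c_0 + c_0) + O(1)\big\} = e^{-c_0 N + O(1)},
\]
after moving $\E_{\x^*}$ inside the exponential via Jensen (the exponent is concave in nothing in particular, so instead: the integrand is bounded by $1$ so we may apply the per-$\x^*$ bound pointwise and then integrate — no Jensen needed). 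Summing over the $O(1/\epsilon')$ windows and adding the $e^{-cN}$ from the bad concentration event yields $\E\langle\indi\{|R_{1,*}|\ge\epsilon\}\rangle \le K e^{-cN}$ with $c = \min(c_0,c)>0$, as claimed.

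\textbf{Main obstacle.} The delicate point is Step~1's interchange of the $\x^*$-expectation with the exponentiated constrained/unconstrained free energies while keeping the concentration uniform in $\x^*$. The FP potential $\Phi_\epsilon(m,\x^*)$ is itself random in $\x^*$; to conclude I need either (i) $\x^*$-concentration of $\Phi_\epsilon(m,\x^*)$ and of $f_N(\x^*)$ around their $\x^*$-means (again Gaussian/bounded-difference concentration, now in the i.i.d.\ bounded coordinates of $\x^*$ — McDiarmid suffices since each coordinate change perturbs the log-partition function by $O(1/N)$ in the exponent, hence by $O(1)$ after multiplying by $N$... wait, that is too weak; one must instead keep the $\x^*$-average \emph{outside} the exponential and use that Proposition~\ref{lower_bound_f} and~\ref{fixed_overlap_upper_bound_paramagnetic} are already stated at the level of $\E_{\x^*}$ or pointwise in $\x^*$), or (ii) simply observe that Proposition~\ref{fixed_overlap_upper_bound_paramagnetic} holds \emph{pointwise in $\x^*$} and $\frac1N\sum_i\widehat\psi(\lambda|m|,\lambda m x_i^*)$ concentrates around $\widebar\psi(\lambda|m|,\lambda m) = \E_{x^*}\widehat\psi(\lambda|m|,\lambda m x^*)$ by the ordinary LLN with exponential rate (Hoeffding, since $\widehat\psi$ is bounded in its second argument on the bounded support of $P_{\xtt}$) — this is the clean route and is the one I would take. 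So the real work is: (a) Gaussian concentration of $G_N^A$ and $G_N$ in $\W$ at rate $e^{-cNu^2}$; (b) Hoeffding concentration of $\frac1N\sum_i\widehat\psi(\lambda|m|,\lambda m x_i^*)$ in $\x^*$; (c) assembling (a), (b), Propositions~\ref{lower_bound_f}, \ref{fixed_overlap_upper_bound_paramagnetic}, Lemma~\ref{asymmetry_lemma}, and the strict negativity of $F(\lambda,\cdot)$ away from $0$. Everything else is bookkeeping over the finitely many overlap windows.
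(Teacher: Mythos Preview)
Your overall strategy matches the paper's almost exactly: partition into overlap windows, use Gaussian concentration in $\W$ (the paper's Lemma~\ref{sub_gaussian_concentration_disorder}), invoke Propositions~\ref{lower_bound_f} and~\ref{fixed_overlap_upper_bound_paramagnetic}, apply Lemma~\ref{asymmetry_lemma} to pass from $\widehat F$ to $F$, and exploit the strict negativity of $F(\lambda,\cdot)$ away from zero when $\lambda<\lambda_c$.

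There is, however, one genuine gap, which you yourself flag as the ``main obstacle'' but do not actually close. In Step~2 you write ``$\E_{\W}G_N = f_N$'', but $\E_{\W}G_N$ is still random in $\x^*$ and is \emph{not} equal to $f_N$; and your parenthetical claim that Proposition~\ref{lower_bound_f} holds per-$\x^*$ is false --- its proof uses the Nishimori identity (to pass from~\eqref{derivative_before_nishimori} to~\eqref{free_energy_derivative}), which requires that $\x^*$ be averaged according to the prior. Your ``clean route'' (ii) handles the $\x^*$-fluctuations of the \emph{numerator} bound via Hoeffding on $\frac{1}{N}\sum_i\widehat\psi(\lambda|m|,\lambda m x_i^*)$ (this is exactly the paper's Lemma~\ref{sub_gaussian_concentration_sums_of_iid}), but it says nothing about the $\x^*$-fluctuations of the \emph{denominator} $\E_{\W}G_N$, and your final checklist (a)--(c) omits this ingredient entirely.

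The paper fills this gap with Lemma~\ref{sub_gaussian_concentration_planted_vector}: the map $\x^*\mapsto \frac{1}{N}\E_{\W}\log\int e^{-H}\,\rmd P_{\xtt}^{\otimes N}$ is convex and $O(\lambda/\sqrt{N})$-Lipschitz, so Talagrand's convex-Lipschitz inequality for bounded coordinates gives sub-Gaussian concentration at rate $e^{-Nu^2/K}$. Incidentally, your dismissal of McDiarmid is too hasty: changing one coordinate of $\x^*$ perturbs $-H(\x)$ uniformly in $\x$ by $O(1)$ (not $O(N)$), hence perturbs $\frac{1}{N}\log Z$ by $O(1/N)$, and McDiarmid then already gives the required $e^{-cNu^2}$ tail. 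Once you add this third concentration statement for the unconstrained free energy, your outline coincides with the paper's proof.
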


To prove the above proposition we first show that the partition function of the model enjoys sub-Gaussian concentration on a logarithmic scale.
This is an elementary consequence of two classical concentration-of-measure results: concentration of Lipschitz functions of Gaussian random variables, and concentration of \emph{convex} Lipschitz functions of \emph{bounded} random variables.

\begin{lemma} \label{sub_gaussian_concentration_disorder}
Fix $\x^* \in \R^N$ and let $A$ be a Borel subset of $\R^N$. Define the random variable
\[Z := \int_A e^{-H(\x)}  \rmd P_{\xtt}^{\otimes N}(\x),\]
where the randomness comes from the Gaussian disorder $\W$.
There exists a constant $K>0$ depending on $\lambda$ and $P_{\xtt}$ such that for all $u \ge 0$,
\[\Pr\left(\abs{ \log Z -  \E\log Z } \ge N u\right) \le 2e^{-Nu^2/K}.\]
\end{lemma}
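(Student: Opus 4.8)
The plan is to split $\log Z$ into two contributions and control each with a standard concentration inequality. Write $-H(\x) = G(\x) + D(\x)$, where $G(\x) = \sum_{i<j}\sqrt{\lambda/N}\,W_{ij}x_ix_j$ is the linear-in-disorder part and $D(\x) = -\sum_{i<j}\frac{\lambda}{2N}x_i^2x_j^2$ is deterministic (given $\x^*$, but it does not depend on $\x^*$ at all once $\sigma=\infty$). Since $D$ is deterministic, it does not affect fluctuations: we only need concentration of $\W \mapsto \log Z$ around its mean, and $D$ just shifts things inside the integral. The key observation is that $\W \mapsto \log Z$ is a Lipschitz function of the Gaussian vector $(W_{ij})_{i<j}$ with a controlled Lipschitz constant, after which the Gaussian concentration inequality (concentration of Lipschitz functions of i.i.d.\ standard Gaussians) gives the sub-Gaussian tail directly.

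The main computation is the Lipschitz bound. Write $\w = (W_{ij})_{i<j} \in \R^{N(N-1)/2}$ and $F(\w) := \log \int_A e^{G(\x)+D(\x)}\,\rmd P_{\xtt}^{\otimes N}(\x)$. Then $\partial F/\partial W_{ij} = \langle \sqrt{\lambda/N}\,x_ix_j\rangle_A$, where $\langle \cdot \rangle_A$ is the Gibbs average restricted to $A$. Hence
\[
\|\nabla F(\w)\|_{\ell_2}^2 = \frac{\lambda}{N}\sum_{i<j}\langle x_ix_j\rangle_A^2 \le \frac{\lambda}{N}\sum_{i<j}\langle x_i^2x_j^2\rangle_A \le \frac{\lambda}{N}\Big\langle \Big(\sum_{i=1}^N x_i^2\Big)^2\Big\rangle_A \le \lambda N \, b^4,
\]
where $b$ is a bound on the support of $P_{\xtt}$ (so $|x_i|\le b$ a.s.); the first inequality is Jensen/Cauchy--Schwarz on the Gibbs average. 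So $F$ is $\sqrt{\lambda N}\,b^2$-Lipschitz as a function of $\w$. By the Gaussian concentration inequality, $\Pr(|F(\w) - \E F(\w)| \ge Nu) \le 2\exp(-N^2u^2/(2\lambda N b^4)) = 2\exp(-Nu^2/(2\lambda b^4))$, which is the claimed bound with $K = 2\lambda b^4$ (and $\log Z = F(\w)$).

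Actually, there is a subtlety worth flagging, and it is the place where the statement's mention of ``concentration of convex Lipschitz functions of bounded random variables'' comes in: if one prefers not to treat the whole array $(W_{ij})$ as the source of randomness but instead wants a bound that also degrades gracefully when, say, conditioning is involved, or if one wants to handle potential unboundedness more carefully, one can alternatively exploit that $\w \mapsto \log Z$ is \emph{convex} in $\w$ (it is a log-integral of exponentials linear in $\w$) and combine Gaussian concentration with Talagrand's convex-Lipschitz concentration for the $\x$-integration. But for the clean bounded-prior case the pure Gaussian argument above suffices, and the convexity remark is only needed if one later wants to deal with the bounded-variable concentration for other functionals in the paper. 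I therefore expect the only real work to be the Lipschitz-constant estimate above; everything else is a direct invocation of the Gaussian concentration inequality, and the $2e^{-Nu^2/K}$ form follows by absorbing the factor $\lambda b^4$ into $K$.

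The main obstacle, such as it is, is purely bookkeeping: making sure the Lipschitz constant scales as $\sqrt{N}$ (not worse) so that the exponent ends up linear in $N$, which is exactly what the chain $\sum_{i<j}\langle x_ix_j\rangle_A^2 \le \langle(\sum_i x_i^2)^2\rangle_A \le N^2 b^4$ delivers after the $\lambda/N$ prefactor. One should also note that the restriction to $A$ (and the deterministic shift $D$) does not interfere: the partial derivatives of $\log Z$ are still Gibbs averages, now with respect to the restricted and $D$-tilted measure, and the bound $|x_i|\le b$ still applies termwise. So the proof is short, and I would present it essentially as: (i) identify $\log Z$ as a function $F$ of the Gaussian vector $\w$; (ii) compute $\nabla F$ and bound $\|\nabla F\|_{\ell_2} \le \sqrt{\lambda N}\,b^2$ using boundedness of the prior and Jensen; (iii) apply the Gaussian concentration inequality and rename the constant.
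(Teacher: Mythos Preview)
Your proof is correct and matches the paper's approach exactly: the paper simply observes that $\W \mapsto \frac{1}{N}\log Z$ is Lipschitz with constant $K\sqrt{\lambda/N}$ and invokes Gaussian concentration (Borell--Tsirelson--Ibragimov--Sudakov), which is precisely your computation of $\|\nabla F\|_{\ell_2}\le \sqrt{\lambda N}\,b^2$ followed by the same inequality. One tiny slip: under $\P_\lambda$ the deterministic part $D(\x)$ also contains the cross term $\frac{\lambda}{N}\sum_{i<j}x_ix_jx_i^*x_j^*$ and so does depend on $\x^*$, but since $\x^*$ is fixed this is irrelevant to the Lipschitz-in-$\W$ argument; your digression on convex--Lipschitz concentration is unneeded here (that tool is used in the paper only for the companion lemma concentrating over $\x^*$).
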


\begin{proof}
We notice that the map $\W \mapsto \frac{1}{N}\log Z$ is Lipschitz with constant $K\sqrt{\frac{\lambda}{N}}$ for every $\x^* \in \R^N$. Then we invoke the Borell-Tsirelson-Ibragimov-Sudakov inequality of concentration of Lipschitz functions of Gaussian r.v.'s. See~\cite{boucheron2013concentration}.
\end{proof}

\begin{lemma} \label{sub_gaussian_concentration_planted_vector}
  Define the random variable
  \[\mathtt{f} := \frac{1}{N}\E_{\W}\log \int e^{-H(\x)}  \rmd P_{\xtt}^{\otimes N}(\x),\]
  where the randomness comes from the planted vector $\x^*$.
  There exist a constant $K>0$ depending on $\lambda$ and $P_{\xtt}$ such that for all $u \ge 0$,
  \[\Pr\left(\abs{ \mathtt{f} -  \E\mathtt{f} } \ge u\right) \le 2e^{-Nu^2/K}.\]
\end{lemma}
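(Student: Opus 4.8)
The plan is to apply the bounded-differences inequality of McDiarmid to the \emph{deterministic} function $\x^* \mapsto \mathtt{f}(\x^*)$, regarded as a function of the $N$ independent coordinates $x_1^*, \dots, x_N^*$, each of which is supported in a fixed bounded interval $[-a,a]$ with $a := \sup\{|x| : x \in \operatorname{supp} P_{\xtt}\} < \infty$. The only quantitative input needed is that modifying a single coordinate of $\x^*$ perturbs $\mathtt{f}$ by at most $\bigo(1/N)$. To see this I would first record the elementary estimate $\big|\log \int e^{f}\,\rmd\mu - \log \int e^{g}\,\rmd\mu\big| \le \|f - g\|_\infty$, valid for any probability measure $\mu$; since $\mathtt{f}(\x^*) = \frac{1}{N}\E_{\W}\log \int e^{-H(\x)}\,\rmd P_{\xtt}^{\otimes N}(\x)$, this reduces everything to controlling the variation of the Hamiltonian $-H$, uniformly in $\x$ and $\W$, under a one-coordinate change of $\x^*$.

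Substituting $Y_{ij} = \sqrt{\lambda/N}\,x_i^* x_j^* + W_{ij}$ into \eqref{hamiltonian} gives $-H(\x) = \frac{\lambda}{N}\sum_{i<j} x_i^* x_j^* x_i x_j + \sqrt{\lambda/N}\sum_{i<j} W_{ij} x_i x_j - \frac{\lambda}{2N}\sum_{i<j} x_i^2 x_j^2$, and only the first sum depends on $\x^*$. Replacing $x_k^*$ by any $\tilde x_k^* \in [-a,a]$ changes that sum by $\frac{\lambda}{N}(\tilde x_k^* - x_k^*)\,x_k \sum_{j \ne k} x_j^* x_j$, whose modulus is at most $\frac{\lambda}{N}\cdot 2a \cdot a \cdot (N-1)a^2 \le 2\lambda a^4$, uniformly in $\x$ and $\W$. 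Combining with the elementary estimate above, $|\mathtt{f}(\x^*) - \mathtt{f}(\tilde\x^*)| \le 2\lambda a^4/N$ whenever $\x^*$ and $\tilde\x^*$ differ in exactly one coordinate, so $\mathtt{f}$ has bounded differences with constants $c_k = 2\lambda a^4/N$. Since the $x_i^*$ are independent, McDiarmid's inequality (equivalently, Azuma--Hoeffding applied to the Doob martingale of $\mathtt{f}$ along the filtration generated by $x_1^*, \dots, x_N^*$) yields $\Pr(|\mathtt{f} - \E\mathtt{f}| \ge u) \le 2\exp\!\big(-2u^2/\sum_{k=1}^N c_k^2\big)$, which is the claimed bound $2e^{-Nu^2/K}$ with $K = 2\lambda^2 a^8$, a constant depending only on $\lambda$ and $P_{\xtt}$.

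The one point requiring care — and the reason the statement holds at this scale — is the one-coordinate estimate above: a priori a single coordinate of $\x^*$ enters $N-1$ of the $\binom{N}{2}$ summands of $-H$, but the $\lambda/N$ prefactor built into the model \eqref{spiked_wigner_model} exactly offsets this, leaving an $\bigo(1)$ perturbation of $\log \int e^{-H}\,\rmd P_{\xtt}^{\otimes N}$ and hence an $\bigo(1/N)$ perturbation of $\mathtt{f}$; boundedness of $\operatorname{supp}P_{\xtt}$ is used here in an essential way. No information about the Gibbs measure or about moments of the overlap is needed for this lemma, which, together with Lemma~\ref{sub_gaussian_concentration_disorder} (handling the Gaussian disorder $\W$ via the Borell--TIS inequality), furnishes the ``elementary concentration'' input feeding the proof of Proposition~\ref{convergence_in_probability_paramagnetic}. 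One could alternatively phrase the argument through concentration of Lipschitz functions of bounded independent random variables, but the bounded-differences route is the most transparent and produces the stated constant directly.
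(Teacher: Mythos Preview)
Your proof is correct but takes a different route from the paper. The paper argues that $\x^* \mapsto \mathtt{f}(\x^*)$ is \emph{convex} and Lipschitz with constant $K\lambda/\sqrt{N}$, and then invokes Talagrand's concentration inequality for convex Lipschitz functions of bounded independent random variables. You instead use McDiarmid's bounded-differences inequality, exploiting that a single-coordinate change of $\x^*$ perturbs $-H(\x)$ by at most $2\lambda a^4$ uniformly in $\x$ and $\W$, hence perturbs $\mathtt{f}$ by at most $2\lambda a^4/N$. Your approach is more elementary: it requires only the bounded-differences estimate, which you verify cleanly, and avoids having to check convexity of $\mathtt{f}$ in $\x^*$ (a step the paper asserts without detail, and which is not entirely obvious since $-H$ is quadratic rather than linear in $\x^*$). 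Both routes deliver the same sub-Gaussian tail $2e^{-Nu^2/K}$; your argument additionally produces the explicit constant $K = 2\lambda^2 a^8$.
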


\begin{proof}
We notice that the map $\x^* \mapsto \mathtt{f}$ is Lipschitz with constant $K \frac{\lambda}{\sqrt{N}}$ and convex. Moreover, the coordinates $x^*_i$ are bounded. We then invoke Talagrand's inequality on the concentration of convex Lipschitz functions of bounded r.v.'s. See~\cite{boucheron2013concentration}.
\end{proof}

\begin{lemma} \label{sub_gaussian_concentration_sums_of_iid}
There exists a constant $K>0$ depending on $\lambda, m$ and $P_{\xtt}$ such that for all $u \ge 0$,
\[\Pr\left(\abs{ \sum_{i=1}^N \widehat{\psi}(\lambda |m|,\lambda mx_i^*) - \widebar{\psi}(\lambda |m|,\lambda m)} \ge N u\right) \le 2e^{-Nu^2/K}.\]
\end{lemma}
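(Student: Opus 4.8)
The plan is to read the left-hand side as an i.i.d.\ sum of bounded random variables and invoke Hoeffding's inequality. Set $Y_i := \widehat{\psi}(\lambda|m|,\lambda m x_i^*)$. Since the coordinates $x_i^*$ are i.i.d.\ from $P_{\xtt}$, the $Y_i$ are i.i.d., and by~\eqref{psi_bar} their common mean is $\E Y_i = \widebar{\psi}(\lambda|m|,\lambda m)$, so that $\E\big[\sum_{i=1}^N Y_i\big] = N\widebar{\psi}(\lambda|m|,\lambda m)$; the statement is then exactly a deviation bound for $\sum_i Y_i$ around its mean. The only thing that needs checking is that each $Y_i$ lies almost surely in a fixed interval whose length depends only on $\lambda$, $m$ and $P_{\xtt}$.

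To get that boundedness I would bound $\widehat{\psi}(r,s)$ from both sides for $r = \lambda|m|\ge 0$ and $|s| = \lambda|m|\,|x_i^*| \le \lambda|m|M$, where $M$ is a bound on the support of $P_{\xtt}$. For the upper bound, drop the non-positive term $-\tfrac{r}{2}x^2$ in the exponent of~\eqref{psi_hat} and use $|x|\le M$: this gives $\int \exp(\sqrt{r}zx + sx - \tfrac{r}{2}x^2)\,\rmd P_{\xtt}(x) \le \exp((\sqrt{r}|z| + |s|)M)$, hence $\widehat{\psi}(r,s) \le \sqrt{r}\,M\,\E|z| + |s|M$. For the lower bound, apply Jensen's inequality to the concave $\log$ inside~\eqref{psi_hat}, obtaining $\log\int \exp(\sqrt{r}zx + sx - \tfrac{r}{2}x^2)\,\rmd P_{\xtt}(x) \ge \sqrt{r}z\,\E_{P_{\xtt}}[X] + s\,\E_{P_{\xtt}}[X] - \tfrac{r}{2}\E_{P_{\xtt}}[X^2]$; taking $\E_z$ and using $\E_z[z]=0$, $\E_{P_{\xtt}}[X]=0$, $\E_{P_{\xtt}}[X^2]=1$ gives $\widehat{\psi}(r,s) \ge -\tfrac{r}{2}$. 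Specializing, $|Y_i| \le C$ almost surely for a finite constant $C = C(\lambda,m,P_{\xtt})$.

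With this in hand, Hoeffding's inequality for independent summands bounded by $C$ in absolute value~\citep{boucheron2013concentration} yields $\Pr\big(\big|\sum_{i=1}^N Y_i - N\widebar{\psi}(\lambda|m|,\lambda m)\big| \ge Nu\big) \le 2\exp\big(-Nu^2/(2C^2)\big)$, which is the claim with $K = 2C^2$. I do not expect a genuine obstacle here: the entire content is the elementary two-sided boundedness of $\widehat{\psi}(\lambda|m|,\lambda m x^*)$ as $x^*$ ranges over the support of $P_{\xtt}$, and this is precisely where the hypotheses on $P_{\xtt}$ enter — boundedness of the support for the upper bound, with centering and unit variance used only to get the clean constant $-\tfrac{r}{2}$ below (a cruder use of $|x|\le M$ in the Jensen step would give boundedness without those two assumptions). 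Alternatively, the same bound follows from the bounded-differences inequality applied to $\x^* \mapsto \sum_i \widehat{\psi}(\lambda|m|,\lambda m x_i^*)$, each of whose coordinate-wise increments is at most the range of $\widehat{\psi}$, hence at most $2C$.
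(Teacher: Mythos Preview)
Your proposal is correct and follows the same approach as the paper: establish that the summands $\widehat{\psi}(\lambda|m|,\lambda m x_i^*)$ are i.i.d.\ and uniformly bounded, then invoke Hoeffding's inequality. The only cosmetic difference is how the boundedness of $\widehat{\psi}$ is obtained---the paper bounds the partial derivatives $|\partial_s\widehat{\psi}|\le K^2$ and $|\partial_r\widehat{\psi}|\le K^2/2$ and integrates from $\widehat{\psi}(0,0)=0$, whereas you bound above by dropping the quadratic term and below by Jensen; both routes are equally elementary and yield the same conclusion.
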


\begin{proof}
Since $\abs{\partial_s\widehat{\psi}(r,sx^*)} \le K^2$, $\abs{\partial_r\widehat{\psi}(r,sx^*)} \le K^2/2$ and $\widehat{\psi}(0,0)=0$, where $K$ is a bound on the radius of the support of $P_{\x}$,
we have $\abs{\widehat{\psi}(r,sx^*)} \le K^2(r/2+s)$. The claim now follows from Hoeffding's inequality.
\end{proof}

\begin{proofof}{Proposition~\ref{convergence_in_probability_paramagnetic}}
For $\epsilon,\epsilon'>0$, we can write the decomposition
\begin{align*}
\E\left\langle \indi\left\{ |R_{1,*}| \ge \epsilon \right\} \right\rangle  &= \sum_{l \ge 0} \E\left\langle \indi \big\{ R_{1,*}-\epsilon \in [l\epsilon',(l+1)\epsilon') \big\}\right\rangle\\
& ~+\sum_{l \ge 0} \E\left\langle \indi \big\{ -R_{1,*}-\epsilon \in [l\epsilon',(l+1)\epsilon') \big\}\right\rangle,
\end{align*}
where the integer index $l$ ranges over a finite set of size $ \le K/\epsilon'$ since the prior $P_{\xtt}$ has bounded support. We will only treat the first sum in the above expression since the argument extends trivially to the second sum. Let $A = \big\{ R_{1,*}-\epsilon \in [l\epsilon',(l+1)\epsilon') \big\}$ and write
\begin{equation}\label{indicator_decomposition_22}
\E\left\langle \indi\{\x \in A\}\right\rangle = \E_{\x^*}\E_{\W} \left[\frac{\int_A e^{-H(\x)}  \rmd P_{\xtt}^{\otimes N}(\x) }{\int e^{-H(\x)}  \rmd P_{\xtt}^{\otimes N}(\x)}\right].
\end{equation}
By virtue of Lemma~\ref{sub_gaussian_concentration_disorder} the two quantities in this fraction enjoy sub-Gaussian concentration on a logarithmic scale over the Gaussian disorder. For any given $l$ and $u \ge 0$, we simultaneously have
\begin{align*}
\frac{1}{N}\log\int e^{-H(\x)}  \rmd P_{\xtt}^{\otimes N}(\x) &\ge \frac{1}{N}\E_{\W} \log\int e^{-H(\x)}  \rmd P_{\xtt}^{\otimes N}(\x) - u 
\end{align*}
and
\begin{align*}
\frac{1}{N}\log \int_A e^{-H_t(\x)}  \rmd P_{\xtt}^{\otimes N}(\x)
&\le \frac{1}{N}\E_{\W}\log \int_A e^{-H_t(\x)}  \rmd P_{\xtt}^{\otimes N}(\x) + u\\
&= \Phi_{\epsilon'}(\epsilon + l\epsilon' ;\x^*)+u,
\end{align*}
with probability at least $1-2e^{-Nu^2/K}$. On the complement of this event, we simply bound the fraction in~\eqref{indicator_decomposition_22} by 1. Combining the above bounds we obtain
\[\E\left\langle \indi\{\x \in A\} \right\rangle \le 2e^{-Nu^2/K} + \E_{\x^*}\left[e^{N(\Delta+2u)}\right],\]
where
\[\Delta = \Phi_{\epsilon'}(m;\x^*) -  \frac{1}{N}\E_{\W} \log\int e^{-H(\x)}  \rmd P_{\xtt}^{\otimes N}(\x),\]
with $m = \epsilon+ l\epsilon'$.
By Proposition~\ref{fixed_overlap_upper_bound_paramagnetic}, $\Phi_{\epsilon'}$ is upper-bounded by a quantity that concentrates over the randomness of $\x^*$.
We use Lemma~\ref{sub_gaussian_concentration_planted_vector} and Lemma~\ref{sub_gaussian_concentration_sums_of_iid} in the same way we used Lemma~\ref{sub_gaussian_concentration_disorder}: for $u'\ge 0$, we simultaneously have
\[\Phi_{\epsilon'}(m;\x^*) \le F(\lambda,|m|,m) +\frac{\lambda \epsilon^2}{2}+ \frac{\lambda K}{N} + u',\]
and
\[\frac{1}{N}\E_{\W} \log\int e^{-H(\x)}  \rmd P_{\xtt}^{\otimes N}(\x) \ge f_N - u',\]
with probability at least $1-4e^{-Nu'^2/K}$, where
\[f_N = \E_{\W,\x^*} \log \int e^{-H(\x)}\rmd P_{\xtt}^{\otimes N}(\x) = \E_{\P_{\lambda}} \log L(\Y;\lambda).\]
Moreover, by Lemma~\ref{asymmetry_lemma}, we have $F(\lambda,|m|,m) \le F(\lambda,|m|,|m|) \equiv F(\lambda,m)$.
 Therefore
\[\E_{\x^*}\left[e^{N\Delta}\right] \le \exp\left(N(F(\lambda,|m|)-f_N+2u')\right)+4e^{-Nu'^2/K}.\]
The second term is obtained by considering the low-probability complement event and noting that $\Delta \le 0$.
Now, by Proposition~\ref{lower_bound_f}, $f_N \ge \sup_{q} F(\lambda,q) - K/N$. 
When $\lambda < \lambda_c$, $q=0$ is the unique maximizer of the $\RS$ potential. Therefore $F(\lambda,|m|)-f_N < -c(\epsilon) <0$ for all $|m| > \epsilon$.
We obtain
\[\E\left\langle \indi\{\x \in A\} \right\rangle \le 2e^{-Nu^2/K} + 4e^{-Nu'^2/K+2Nu} + e^{N(-c(\epsilon)+2u+2u')}.\]
Finally, adjusting the parameters $u,u'$ yields the desired result (e.g., $u' = c(\epsilon)/3$ and $u = c(\epsilon)^2/36 \wedge c(\epsilon)/9$).
\end{proofof}

\vspace{.3cm}
\begin{proofof}{Proposition~\ref{orthogonality}}
Here we prove possibility of strong detection above $\lambda_c$. From Proposition~\ref{lower_bound_f}, we know that $\underline{\lim}\frac{1}{N}\E_{\P_{\lambda}}\log L \ge \phi_{\RS}(\lambda) >0$  for $\lambda > \lambda_c$. On the other hand, $\E_{\P_{0}}\log L \le 0$ by Jensen's inequality. Now it remains to argue that $\frac{1}{N}\log L$ concentrates about its expectation under both $\P_{\lambda}$ and $\P_{0}$. This is a consequence of   
Lemmas~\ref{sub_gaussian_concentration_disorder} and~\ref{sub_gaussian_concentration_planted_vector}: we have, for all $u \ge 0$,
\[\P_{\lambda}\left(\log L - \E_{\P_{\lambda}}\log L \le -Nu\right) \vee \P_{0}\left(\log L - \E_{\P_{0}}\log L \ge Nu\right) \le 4e^{-Nu^2/K}. \]
This concludes the proof. (Note also that the tail decays fact enough to insure almost-sure convergence via the Borel-Cantelli lemma.)  
\end{proofof}

\section{When the diagonal is not discarded}
\label{sxn:diagonal_kept}
When the variance of the diagonal noise entries $W_{ii}$ is kept finite, one has to keep track of the contribution of the diagonal part $d(\x)=\frac{1}{\sigma^2}\sum_{i=1}^N \sqrt{\frac{\lambda}{N}} Y_{ii}x_i^2 -\frac{\lambda}{2N} x_i^4$ of the Hamiltonian. In this case, the derivative of the characteristic function $\phi_N(\lambda)$ of the log-LR w.r.t.\ $\lambda$ displayed in Lemma~\ref{lemma_derivative_f} has an additional term:
\[\phi_N'(\lambda) =  \frac{\imnb s -s^2}{4}  \E\big[(N\langle R_{1,*}^2\rangle - \langle x_N^2x_N^{*2}\rangle)e^{\imnb s \log L}\big] 
+\frac{\imnb s -s^2}{2\sigma^2} \E\big[\langle x_N^2x_N^{*2}\rangle e^{\imnb s \log L}\big].\]
The cavity computations performed in Section~\ref{sxn:asymptotic_decoupling} also need to be altered in a minor way: in the interpolation argument separating the last variable $x_N$ from the rest of the variables, we also have to make $d(\x)$ time-dependent by performing the change of variable $\lambda \to \lambda t$. As a result of the computation, Equation~\eqref{first_fundamental_bound} becomes
\[(1-\lambda)N \E\left[\langle R_{1,*}^2\rangle e^{\imnb s \log L}\right] = \E\left[\langle x_N^2x_N^{*2}\rangle e^{\imnb s \log L}\right] +\frac{\lambda \kappa}{\sigma^2} \E\left[e^{\imnb s \log L}\right]+ \delta,\]   
with $|\delta|\le K/\sqrt{N}$, $\kappa = \E_{P_{\xtt}}[X^3]^2$, while Equation~\eqref{second_fundamental_bound} remains intact. As a result of these changes, and the above formula for $\phi_N'$, we get
\[\phi_N'(\lambda) = \frac{\imnb s - s^2}{4} \left(\frac{1+\lambda \kappa/\sigma^2}{1-\lambda} - 1\right)\phi_N(\lambda)  + \frac{\imnb s - s^2}{2\sigma^2}\phi_N(\lambda)+ \delta,\] 
and this leads to the formula claimed.
\section{Conclusion}
This paper investigates the fundamental limits of spike detection in the rank-one spiked Wigner model. We proved that the logarithm of the likelihood ratio has Gaussian fluctuations below the reconstruction threshold $\lambda_c$ while it is exponentially large under the alternative above it. This establishes the maximal region of contiguity between the planted and null models: namely the open interval $(0,\lambda_c)$. This also pins down the performance of the optimal test, and provides formulae for the Kullback-Leibler and the total variation distances between the null and planted distributions. An important characteristic of this threshold is that it is not necessarily related to the spectrum of the observed matrix: there are cases where $\lambda_c$ does not correspond to the point where the signal shows up in the spectrum.       

Our proofs repose on the technology developed within spin-glass theory for the study of the SK model. It is of interest to extend these techniques to  other spiked models, notably spiked covariance models where the perturbation is in the covariance matrix of the data. Partial progress establishing Gaussian fluctuations of the LR  in a restricted regime was recently obtained by a subset of the authors~\citep{alaoui2018detection}. Reaching the optimal threshold---a conjectural formula of which is provided in this recent paper---remains an interesting problem.

\appendix

\section{Convergence of the second moment}
\label{sxn:convergence_second_moment}
In this section we prove the convergence of the second moment of the overlaps: $\E\langle R_{1,*}^2 \rangle \le \frac{K}{N}$. 
We recall the notation $\nu_t(f) = \E\langle f\rangle_{t}$, where $\langle \cdot \rangle_t$ denotes the interpolating Gibbs average corresponding to the Hamiltonian 
\begin{align*}
-H_t(\x) &:= \sum_{1\le i < j \le N-1}  \sqrt{\frac{\lambda }{N}} W_{ij}x_ix_j + \frac{\lambda }{N}x_ix_i^*x_jx_j^* -\frac{\lambda }{2N} x_i^2x_j^2\\
&~~+\sum_{i =1}^{N-1} \sqrt{\frac{\lambda t}{N}} W_{iN}x_i x_N + \frac{\lambda t}{N} x_ix_i^* x_Nx_N^* - \frac{\lambda t}{2N} x_i^2 x_N^2.
\end{align*}
The following lemma will be useful.
\begin{lemma}\label{first_second_order_estimates}
For all $t \in [0,1]$, and all $\tau_1,\tau_2 >0$ such that $1/\tau_1+1/\tau_2=1$,
\begin{align}
\abs{\nu_t(f) - \nu_0(f)} &\le K(\lambda,n) \nu\left(\abs{R^-_{1,*}}^{\tau_1}\right)^{1/\tau_1} \cdot \nu\left(|f|^{\tau_2}\right)^{1/\tau_2} \label{first_order_estimate}\\
\abs{\nu_t(f) -\nu_0(f) - \nu_0'(f)} &\le K(\lambda,n) \nu\left(\abs{R^-_{1,*}}^{\tau_1}\right)^{1/\tau_1} \cdot \nu\left(|f|^{\tau_2}\right)^{1/\tau_2}.\label{second_order_estimate}
\end{align}
\end{lemma}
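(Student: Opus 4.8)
The plan is to apply the fundamental theorem of calculus to $t \mapsto \nu_t(f)$ together with the explicit formula for $\nu_t'$ from Lemma~\ref{gibbs_derivative}. For the first bound~\eqref{first_order_estimate}, I would write $\nu_t(f) - \nu_0(f) = \int_0^t \nu_u'(f)\,\rmd u$, and then examine each of the terms appearing in the formula for $\nu_u'(f)$. Every such term has the shape $\pm\,\lambda\, c_n\, \nu_u(R^-_{l,l'}\,y^{(l)}y^{(l')}f)$ for some pair of indices (where $y = x_N$) and some combinatorial constant $c_n$. Since the prior $P_{\xtt}$ has bounded support, each $|y^{(l)}y^{(l')}| \le K$, so by H\"older's inequality with exponents $\tau_1,\tau_2$ each of these terms is bounded in modulus by $K(\lambda,n)\,\nu_u(|R^-_{l,l'}|^{\tau_1})^{1/\tau_1}\,\nu_u(|f|^{\tau_2})^{1/\tau_2}$. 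Now I would invoke Lemma~\ref{bound_gibbs_derivative} (applied to the bounded nonnegative functions $|R^-_{l,l'}|^{\tau_1}$ and $|f|^{\tau_2}$) to replace $\nu_u$ by $\nu = \nu_1$ at the cost of another constant, and use the symmetry among replica indices and the Nishimori property (valid under $\langle\cdot\rangle_t$ for the interpolated Hamiltonian, as noted after~\eqref{interpolating_hamiltonian_symmetric_paramagnetic}) to replace every overlap $R^-_{l,l'}$ by $R^-_{1,*}$. Integrating over $u\in[0,t]$ then gives~\eqref{first_order_estimate}.

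For the second bound~\eqref{second_order_estimate}, I would use the first-order Taylor expansion with integral remainder: $\nu_t(f) - \nu_0(f) - t\,\nu_0'(f) = \int_0^t (t-u)\,\nu_u''(f)\,\rmd u$, and more simply $\nu_t(f)-\nu_0(f)-\nu_0'(f)$ when $t=1$ can be handled by writing it as $\int_0^1(\nu_u'(f) - \nu_0'(f))\,\rmd u = \int_0^1\!\!\int_0^u \nu_v''(f)\,\rmd v\,\rmd u$. Differentiating the formula of Lemma~\ref{gibbs_derivative} once more (again by Gaussian integration by parts, exactly as in the proof of that lemma) produces terms that are quadratic in the cavity overlaps $R^-$, i.e.\ of the form $\lambda^2 c_n\, \nu_v(R^-_{a,b}R^-_{c,d}\, y^{(a)}\cdots y^{(d)} f)$. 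Each such term carries \emph{two} factors of a cavity overlap, and since $|R^-_{a,b}| \le 1$ (bounded prior) one of these two factors can simply be bounded by a constant, leaving a single $|R^-|$ factor; then the same H\"older step, Lemma~\ref{bound_gibbs_derivative}, replica symmetry and Nishimori as above yield the bound $K(\lambda,n)\,\nu(|R^-_{1,*}|^{\tau_1})^{1/\tau_1}\nu(|f|^{\tau_2})^{1/\tau_2}$ for each term. Integrating twice over $[0,1]$ completes the proof.

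The only genuinely delicate point is bookkeeping: one must check that $\nu_0'(f)$ — which involves terms where $x_N$ appears to degree one and hence vanishes when the last variable decouples at $t=0$, as exploited in Section~\ref{sxn:asymptotic_decoupling} — is correctly matched against the expansion, and that differentiating $\nu_u'$ does not generate a term with \emph{no} cavity-overlap factor. In fact every monomial produced by Lemma~\ref{gibbs_derivative} already carries at least one $R^-$, and each further differentiation introduces (via integration by parts against the Gaussian disorder $W_{iN}$ or the planted coupling) an additional $R^-$ factor while never removing the existing one, so the count of $R^-$ factors only grows; this is what guarantees the single surviving $|R^-_{1,*}|$ in the final estimate. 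I expect this combinatorial verification, rather than any analytic difficulty, to be the main thing to get right, and it is entirely parallel to the computations already carried out in Sections~\ref{sxn:proof_fluctuations} and~\ref{sxn:asymptotic_decoupling}, so it can be cited as such rather than redone in full.
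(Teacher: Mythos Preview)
Your proposal is correct and follows essentially the same route as the paper: the paper's proof uses the Taylor-type bounds $|\nu_t(f)-\nu_0(f)|\le\sup_s|\nu_s'(f)|$ and $|\nu_t(f)-\nu_0(f)-\nu_0'(f)|\le\sup_s|\nu_s''(f)|$, then Lemma~\ref{gibbs_derivative} (applied once, respectively recursively) together with the triangle inequality, H\"older, and Lemma~\ref{bound_gibbs_derivative}---exactly the ingredients you list. Your version is simply more explicit about the replica symmetry/Nishimori step that reduces all overlaps to $R^-_{1,*}$ and about absorbing one of the two overlap factors in $\nu''$ into the constant, both of which the paper leaves implicit.
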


\begin{proof}
We use Taylor's approximations
\begin{align*}
\abs{\nu_t(f) - \nu_0(f)} &\le \sup_{0 \le s \le 1} \abs{\nu'_s(f)},\\
\abs{\nu_s(f) -\nu_0(f) - \nu_0'(f)} &\le \sup_{0 \le s \le 1} \abs{\nu''_s(f)},
\end{align*}
then Lemma~\ref{gibbs_derivative} and the triangle inequality to bound the right hand sides, then H\"older's inequality to bound each term in the derivative, and then we apply Lemma~\ref{bound_gibbs_derivative}. (To compute the second derivative, one need to use Lemma~\ref{gibbs_derivative} recursively.)
\end{proof}

By symmetry between variables, we have
\[\E\langle R_{1,*}^2 \rangle = \E\langle x_N x_N^* R_{1,*}\rangle = \frac{1}{N}\E\langle (x_N x_N^*)^2\rangle + \E\langle x_N x_N^* R^-_{1,*}\rangle.\]
By the first bound~\eqref{first_order_estimate} of Lemma~\ref{first_second_order_estimates} with $\tau_1 = 1$, $\tau_2 = \infty$, we have
\[\nu((x_Nx_N^*)^2) =  \nu_0((x_Nx_N^*)^2) + \delta = \E_{P_{\xtt}}[X^2]^2 + \delta = 1+\delta,\]
with $|\delta| \le K(\lambda)\nu(|R^-_{1,*}|)$.
On the other hand, by the second bound~\eqref{second_order_estimate} with $\tau_1 = 1$, $\tau_2 = \infty$, we get
\[\nu (R^-_{1,*}x_Nx_N^*) = \nu_0'(R^-_{1,*}x_Nx_N^*) + \delta. \]
This is because $\nu_0(R^-_{1,*}x_Nx_N^*)=0$, since last variable $x_N$ decouples from the remaining $N-1$ variables under the measure $\nu_0$. Now, we use Lemma~\ref{gibbs_derivative} with $n=1$, to evaluate the above derivative at $t=0$. We still write $y^{(l)} = x_N^{(l)}$.
\begin{align*}
\nu_0'(R^-_{1,*}x_Nx_N^*) &= -\lambda \nu_0(y^{(1)}y^{(2)}y^{(1)}y^*R^-_{1,*}R^-_{1,2})
+ \lambda \nu_0(y^{(1)}y^{*}y^{(1)}y^{*}R^{-2}_{1,*})\\
&~~~ - \lambda  \nu_0(y^{(2)}y^{*}y^{(1)}y^* R^-_{1,*}R^-_{2,*})
+ \lambda  \nu_0(y^{(2)}y^{(3)}y^{(1)}y^*R^-_{1,*}R^-_{2,3})\\
&= \lambda \nu_0(R^{-2}_{1,*}).
\end{align*}
In the above, the only term that survived is the second one since all variables $y$ appearing in it are squared.
We now use Lemma~\ref{first_second_order_estimates} to argue that $\nu_0(R^{-2}_{1,*}) \simeq \nu_1(R^{-2}_{1,*})$.
We apply the estimate~\eqref{first_order_estimate} with $t=1$, $\tau_1 = 3$ and $\tau_2 = 3/2$ to obtain
\[\nu_0(R^{-2}_{1,*}) = \nu(R^{-2}_{1,*}) + \delta\]
with $|\delta| \le K(\lambda)\nu(|R^-_{1,*}|^3)$. Moreover,
\[\nu(R^{-2}_{1,*}) = \nu((R_{1,*}-\frac{1}{N}yy^*)^2) = \nu(R^{2}_{1,*}) -\frac{2}{N}\nu(yy^*R_{1,*}) + \frac{1}{N^2}\nu((yy^*)^2).\]
The third term is of order $1/N^2$, and the second term is bounded by $\frac{1}{N}\nu(|R_{1,*}|)$. Therefore
\[\nu_0((R^-_{1,*})^2) = \nu(R_{1,*}^2) + \delta',\]
with \[|\delta'| \le K(\lambda)\left(\frac{1}{N}\nu(|R^-_{1,*}|)+\nu(|R^-_{1,*}|^3) + \frac{1}{N^2}\right).\]
Putting things together, we have proved that
\begin{equation}\label{first_cavity}
\nu(R_{1,*}^2) = \frac{1}{N} + \lambda \nu(R_{1,*}^2) + \delta,
\end{equation}
where
\begin{equation}\label{delta_error}
|\delta| \le  K(\lambda)\left(\frac{1}{N}\nu(|R^-_{1,*}|)+\nu(|R^-_{1,*}|^3) + \frac{1}{N^2}\right).
\end{equation}
Now we need to control the error term $\delta$. By elementary manipulations,
\[\nu(|R_{1,*}^-|) \le \nu(|R_{1,*}|) + \frac{K}{N},\]
and
\[\nu(|R_{1,*}^-|^3) \le \nu(|R_{1,*}|^3) + \frac{K}{N}\nu(R_{1,*}^2) + \frac{K}{N^2}\nu(|R_{1,*}|) + \frac{K}{N^3}.\]
Therefore, from~\eqref{delta_error} we have
\begin{equation}\label{delta_error_2}
|\delta| \le K \left( \nu(|R_{1,*}|^3) +  \frac{1}{N}\nu(R_{1,*}^2) + \frac{1}{N}\nu(|R_{1,*}|) +\frac{1}{N^2} \right).
\end{equation}
At this point, the prior knowledge that $R_{1,*}$ is small with high probability is useful. It implies that
 $\nu(|R_{1,*}|) \ll 1$ and $\nu(|R_{1,*}|^3) \ll \nu(R_{1,*}^2)$.
With Proposition~\ref{convergence_in_probability_paramagnetic} we have for $\epsilon>0$
\[\nu(|R_{1,*}|)  \le \epsilon + K(\epsilon) e^{-c(\epsilon)N},\]
and
\[\nu( |R_{1,*}|^3)  \le \epsilon \nu( R_{1,*}^2) + K(\epsilon) e^{-c(\epsilon)N}.\]
Combining the above two bounds with~\eqref{delta_error_2}, and then injecting in~\eqref{first_cavity}, we get
\begin{align*}
\nu(R_{1,*}^2) \le \frac{1}{N} + (\lambda + \frac{K}{N} +K\epsilon) \nu(R_{1,*}^2) + \frac{K}{N^2}+Ke^{-cN}.
\end{align*}
 We choose $\epsilon$ small enough and $N$ large enough that $K(\epsilon+\frac{1}{N}) <1 -\lambda$. We obtain
\[\nu\left(R_{1,*}^2\right) \le \frac{K}{N} + \frac{K}{N^2} +Ke^{-cN}.\]

\section{Convergence of the fourth moment}
\label{sxn:convergence_fourth_moment}
In this section we prove the convergence of the fourth moment:
$\E\langle R_{1,*}^4\rangle \le \frac{K}{N^2}$.
We adopt the same technique based on the cavity method, with the extra knowledge that the second moment converges.
Many parts of the argument are exactly the same so we will only highlight the main novelties in the proof.
By symmetry between variables,
\begin{align*}
\nu(R_{1,*}^4) = \nu \left(R_{1,*}^3x_Nx_N^*\right)
&=  \nu((R^-_{1,*})^3 x_Nx_N^*) + \frac{3}{N}\nu ((R^-_{1,*})^2 (x_Nx_N^*)^2)\\
&~~~ + \frac{3}{N^2}\nu (R^-_{1,*}(x_Nx_N^*)^3) + \frac{1}{N^3}\nu((x_Nx_N^*)^4).
\end{align*}
The quadratic term is bounded as
\[\nu((R^-_{1,*})^2 (x_Nx_N^*)^2) \le K \nu((R^-_{1,*})^2) \le \frac{K}{N}. \]
The last inequality is using our extra knowledge about the convergence of the second moment. The last two terms are also bounded by $K/N^2$ and $K/N^3$ respectively.
Now we must deal with the cubic term, and here, we apply the exact same technique used to deal with the term $\nu (R^-_{1,*}x_Nx_N^*)$ in the previous proof.
The argument applies verbatim  and we obtain
\begin{align*}
\nu(R_{1,*}^4) \le \frac{K}{N^2}+\lambda \nu(R_{1,*}^4) + K\nu(|R^-_{1,*}|^{5}) + K\nu(|R^-_{1,*}|^{3})
\end{align*}
Using Proposition~\ref{convergence_in_probability_paramagnetic}, we have for $\epsilon >0$,
\[\nu( |R_{1,*}|^5)  \le \epsilon \nu( R_{1,*}^4) + K(\epsilon) e^{-c(\epsilon) N},\]
\[\nu( |R_{1,*}|^3)  \le \epsilon \nu( R_{1,*}^2) + K(\epsilon) e^{-c(\epsilon) N}.\]
Now, we finish the argument in the same way, by choosing $\epsilon$ sufficiently small.
This concludes the proof of Theorem~\ref{convergence_fourth_moment_symmetric_paramagnetic}.


\section{Proof of Lemma~\ref{asymmetry_lemma}}
\label{sxn:proof_of_lemma_six}
A straightforward calculation reveals that 
\begin{equation*} 
\frac{\partial}{\partial s} \widebar{\psi}(r,s) = \E\left[\langle xx^*\rangle\right], 
\quad \mbox{and} \quad 
\frac{\partial^2}{\partial s^2} \widebar{\psi}(r,s) = \E\left[x^{* 2}(\langle x^2\rangle-\langle x\rangle^2)\right] >0,
\end{equation*} 
so that $s \mapsto \frac{\partial}{\partial s} \widebar{\psi}(r,s)$ is Lipschitz and strongly convex on any interval, and for all $r\ge 0$. �

Let $\nu = P_{\xtt}$, and let $\mu$ be the symmetric part of $P_{\xtt}$, i.e., $\mu(A) = (P_{\xtt}(A)+P_{\xtt}(-A))/2$ for all Borel $A \subseteq \R$. We observe that $\nu$ is absolutely continuous with respect to $\mu$, so that the Radon-Nikodym derivative $\frac{\rmd \nu}{\rmd \mu}$ is a well-defined measurable function from $\R$ to $\Rp$ that integrates to one.

\begin{lemma}\label{asymmetry_lower_bound2}
For all $r \ge 0$, we have 
\[\widebar{\psi}(r,r)  -\widebar{\psi}(r,-r) \ge 2\E \left[\left\langle \frac{\rmd \nu}{\rmd \mu}(x) - 1 \right\rangle_{\mu,r}^2\right],\]
where $\langle \cdot \rangle_{\mu,r}$ is the average w.r.t.\ to the Gibbs measure corresponding to the Gaussian channel $y = \sqrt{r} x^* + z$, $x^* \sim \mu$ and $z \sim \normal(0,1)$.
\end{lemma}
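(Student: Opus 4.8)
The plan is to realize both $\widebar{\psi}(r,r)$ and $\widebar{\psi}(r,-r)$ as free energies of the same scalar Gaussian channel but with the prior measure changed, and then to compare them via a convexity/entropy argument. Concretely, recall that $\widebar{\psi}(r,s) = \E_{x^*,z}\log\int\exp(\sqrt{r}zx + sxx^* - \tfrac r2 x^2)\,\rmd P_{\xtt}(x)$. Writing $P_{\xtt} = \nu$ and letting $\mu$ be its symmetrization, I would first change variables in the integral over $x$ from $\nu$ to $\mu$, inserting the Radon-Nikodym factor $\tfrac{\rmd\nu}{\rmd\mu}(x)$. This recasts $\widebar{\psi}(r,s)$ as the expected log-partition function of a tilted measure $\rmd\mu_{s}(x) \propto \tfrac{\rmd\nu}{\rmd\mu}(x)\,e^{\sqrt r zx + sxx^* - \tfrac r2 x^2}\rmd\mu(x)$, and the key point is that the \emph{reference} channel $\langle\cdot\rangle_{\mu,r}$ (i.e. the posterior for $y=\sqrt r x^*+z$ with $x^*\sim\mu$) has a Nishimori-type symmetry because $\mu$ is symmetric: flipping $x^*\mapsto -x^*$ leaves $\mu$ invariant, and simultaneously flipping the sign of $z$ absorbs the sign change in the coupling term.

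Next I would exploit this sign symmetry to relate $\widebar{\psi}(r,-r)$ to $\widebar{\psi}(r,r)$. Under the measure $\mu$, the map $(x^*,z)\mapsto(-x^*,-z)$ is measure-preserving, and it sends the integrand defining $\widehat\psi(r,-rx^*)$ to that defining $\widehat\psi(r,rx^*)$ \emph{with $\nu$ replaced by its reflection $\nu^-$}, where $\nu^-(A) := \nu(-A)$. Thus $\widebar{\psi}(r,-r) = \E_{x^*\sim\mu}\widehat\psi^{\,\nu^-}(r,rx^*)$ while $\widebar{\psi}(r,r) = \E_{x^*\sim\mu}\widehat\psi^{\,\nu}(r,rx^*)$, and since $\mu = (\nu+\nu^-)/2$, the difference becomes a statement about how $\widehat\psi(r,rx^*)$ responds to replacing the prior density $g := \tfrac{\rmd\nu}{\rmd\mu}$ by $2-g = \tfrac{\rmd\nu^-}{\rmd\mu}$. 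The natural tool here is that $g\mapsto\int g\,e^{(\cdots)}\rmd\mu$ is linear in $g$, so $\log$ of it is concave in $g$; writing $\widehat\psi^{\,\nu}$ and $\widehat\psi^{\,\nu^-}$ at the two points $g$ and $2-g$, whose midpoint is the constant function $1$ (the $\mu$-density of $\mu$ itself), concavity gives $\tfrac12\widehat\psi^{\,\nu} + \tfrac12\widehat\psi^{\,\nu^-} \le \widehat\psi^{\,\mu}$, which already yields Lemma~\ref{asymmetry_lemma}; the quantitative lower bound requires the \emph{reverse} direction, i.e. a second-order Taylor expansion of $g\mapsto\log\int g\,e^{(\cdots)}\rmd\mu$ around $g\equiv 1$ controlling the remainder.

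The cleanest route to the quadratic lower bound is therefore a direct Taylor-with-integral-remainder computation: set $h(\tau) := \E_{x^*,z}\log\int(1+\tau(g(x)-1))\,e^{\sqrt r zx + rxx^* - \tfrac r2 x^2}\rmd\mu(x)$, so $h(1) = \widebar\psi(r,r)$ and $h(-1) = \widebar\psi(r,-r)$ (using the sign symmetry above to identify the $\tau=-1$ endpoint with the $\nu^-$ free energy evaluated against $\mu$), and $h(0) = \widebar\psi^{\,\mu}(r,r) =: \psi_\mu$. By symmetry of $\mu$ one checks $h'(0) = \E\langle g(x)-1\rangle_{\mu,r}$ where $\langle\cdot\rangle_{\mu,r}$ is exactly the reference Gibbs average in the statement; a short symmetry argument shows this equals $0$ (flip $x,x^*,z$). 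Then $\widebar\psi(r,r)+\widebar\psi(r,-r) - 2\psi_\mu = \int_0^1(1-\tau)(h''(\tau)+h''(-\tau))\,\rmd\tau$, and since $h''(\tau) = -\E\langle (g(x)-1)^2\rangle$-type variance term is \emph{negative}, this combination is $\le 0$ — that is the wrong sign for what we want. So instead I would compare $\widebar\psi(r,r)$ against $\widebar\psi(r,-r)$ \emph{directly} rather than through the midpoint: write $\widebar\psi(r,r)-\widebar\psi(r,-r) = h(1)-h(-1) = \int_{-1}^{1}h'(\tau)\,\rmd\tau = 2h'(0) + \int_{-1}^1(1-|\tau|)\,h''(\tau)\cdot(\text{sign adjustment})$… — the correct bookkeeping is that $h$ is concave, $h(-1)\le h(0) + (-1)h'(0)$ is not directly what is needed either, so the honest statement is $h(1)-h(-1) = 2h'(0) + \big(\text{third-order and higher}\big)$, and the claimed bound $\ge 2\E\langle g(x)-1\rangle_{\mu,r}^2$ must come from a genuinely different, more hands-on manipulation — likely expanding $\log\tfrac{\int g\,e^{(\cdots)}}{\int(2-g)\,e^{(\cdots)}}$ and using $\log\tfrac{a}{b}\ge \tfrac{2(a-b)}{a+b}$ with $a-b = \int(2g-2)e^{(\cdots)}$, then Jensen. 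I expect the main obstacle to be exactly this last step: getting the \emph{sharp quadratic} lower bound (not merely nonnegativity, which is Lemma~\ref{asymmetry_lemma}) with the precise constant $2$ and the precise object $\E\langle g(x)-1\rangle_{\mu,r}^2$, which will require choosing the right elementary inequality ($\log(a/b)\ge 2(a-b)/(a+b)$ for $a,b>0$) and then applying Jensen's inequality in the right place to pull the square outside the Gibbs average. Once that inequality is in hand, everything else is the routine symmetry bookkeeping sketched above.
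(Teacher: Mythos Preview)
Your proposal has a genuine gap, which you partly diagnose yourself: the interpolant $h(\tau)$ is concave in $\tau$, so it cannot by itself produce a quadratic \emph{lower} bound on $h(1)-h(-1)$. There is also an earlier inconsistency. For $h(\pm 1)$ to equal $\widebar\psi(r,\pm r)$ you must keep $x^*\sim\nu$; but then the flip $(x,x^*,z)\mapsto(-x,-x^*,-z)$ does not preserve the law of $x^*$, and your claim $h'(0)=0$ fails. Taking $x^*\sim\mu$ instead restores the symmetry but breaks the endpoint identifications. The fallback inequality $\log(a/b)\ge 2(a-b)/(a+b)$ you suggest at the end does not obviously deliver the squared Gibbs average $\E\langle g-1\rangle_{\mu,r}^2$ with $x^*\sim\mu$ either, so the sketch does not close.

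The paper resolves both issues by interpolating \emph{both} the inner measure on $x$ and the outer measure on $x^*$ simultaneously via $\rho_t=(1-t)\mu+t\nu$, and by working with the \emph{difference} $\phi(t):=\widebar\psi(r,r;t)-\widebar\psi(r,-r;t)$ rather than with a single free energy. Each $\widebar\psi(r,\pm r;t)$ is concave in the sense you found, but upon subtraction---using that $\nu-\mu$ is anti-symmetric and that the Nishimori property holds along the whole path precisely because both measures move together---the second derivative collapses to $\phi''(t)=4(1-t)\,\E\big[\langle \tfrac{\rmd(\nu-\mu)}{\rmd\rho_t}(x)\rangle^2\big]\ge 0$. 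Hence $\phi$ is convex on $[0,1]$ with $\phi(0)=\phi'(0)=0$, and the paper concludes $\phi(1)\ge\tfrac12\phi''(0)=2\,\E[\langle g-1\rangle_{\mu,r}^2]$. The ingredient you are missing is exactly this: subtract the $-r$ version from the $+r$ version with both measures interpolated, so that the concave pieces cancel and a genuinely convex object remains.
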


\begin{proofof}{Lemma~\ref{asymmetry_lower_bound2}}
The argument relies on a smooth interpolation method between the two measures $\mu$ and $\nu$. Let $t \in [0,1]$ and let $\rho_t = (1-t)\mu + t\nu$. Further, let $r,s \ge 0$ be fixed, and
\[\widebar{\psi}(r,s;t) := \E_{z} \int \left(\log \int \exp\left(\sqrt{r}zx + s xx^* - \frac{r}{2} x^2\right) \rmd \rho_t(x)\right) \rmd \rho_t(x^*),\]
where $z \sim \normal(0,1)$. Now let 
\[\phi(t) = \widebar{\psi}(r,r;t) - \widebar{\psi}(r,-r;t).\]
We have $\phi(1) = \widebar{\psi}(r,r) - \widebar{\psi}(r,-r)$ on the one hand, and since $\mu$ is a symmetric distribution, $\phi(0) = 0$ on the other. 
We will show that $\phi$ is a convex increasing function on the interval $[0,1]$, strictly so if $\mu \neq \nu$, and that $\phi'(0) = 0$. Then we  deduce that $\phi(1) \ge \frac{\phi''(0)}{2}$, allowing us to conclude. First, we have
\begin{align*}
\frac{\rmd}{\rmd t} \widebar{\psi}(r,r;t) &= \E_{z} \int \log \int e^{\sqrt{r}zx + r xx^* - \frac{r}{2} x^2} \rmd \rho_t(x) ~\rmd (\nu - \mu)(x^*)\\
&~~~+\E_{z} \int \frac{\int e^{\sqrt{r}zx + r xx^* - \frac{r}{2} x^2}\rmd(\nu - \mu)(x)}{\int e^{\sqrt{r}zx + r xx^* - \frac{r}{2} x^2}\rmd \rho_t(x)} ~\rmd \rho_t(x^*),
\end{align*}
and
\begin{align*}
\frac{\rmd^2}{\rmd t^2} \widebar{\psi}(r,r;t) &= 2\E_{z} \int \frac{\int e^{\sqrt{r}zx + r xx^* - \frac{r}{2} x^2}\rmd(\nu - \mu)(x)}{\int e^{\sqrt{r}zx + r xx^* - \frac{r}{2} x^2}\rmd \rho_t(x)} ~\rmd (\nu - \mu)(x^*)\\
&~~~ -2\E_{z} \int \left(\frac{\int e^{\sqrt{r}zx + r xx^* - \frac{r}{2} x^2}\rmd(\nu - \mu)(x)}{\int e^{\sqrt{r}zx + r xx^* - \frac{r}{2} x^2}\rmd \rho_t(x)}\right)^2 ~\rmd \rho_t(x^*).
\end{align*}
Similar expressions holds for $\widebar{\psi}(r,-r;t)$ where $x^*$ is replaced by $-x^*$ inside the exponentials. We see from the expression of the first derivative at $t=0$ that $\widebar{\psi}(r,r;0)' = \widebar{\psi}(r,-r;0)'$. This is because $\rho_0 = \mu$ is symmetric about the origin, so a sign change (of $x$ for the first term, and $x^*$ for the second term in the expression) does not affect the value of the integrals. Hence $\phi'(0) = 0$. 
Now, we focus on the second derivative. Observe that since $\mu$ is the symmetric part of $\nu$, $\nu - \mu$ is anti-symmetric. This implies that the first term in the expression of the second derivative changes sign under a sign change in $x^*$, and keeps the same modulus. As for the second term, a sign change in $x^*$ induces integration against $\rmd \rho_t(-x^*)$. Hence we can write the difference $(\widebar{\psi}(r,r;t)-\widebar{\psi}(r,-r;t))''$ as  
\begin{align*}
\frac{\rmd^2}{\rmd t^2} \phi(t) &= 4 \E_{z} \int \frac{\int e^{\sqrt{r}zx + r xx^* - \frac{r}{2} x^2}\rmd(\nu - \mu)(x)}{\int e^{\sqrt{r}zx + r xx^* - \frac{r}{2} x^2}\rmd \rho_t(x)} ~\rmd (\nu - \mu)(x^*)\\ 
&~~~-2\E_{z} \int \left(\frac{\int e^{\sqrt{r}zx + r xx^* - \frac{r}{2} x^2}\rmd(\nu - \mu)(x)}{\int e^{\sqrt{r}zx + r xx^* - \frac{r}{2} x^2}\rmd \rho_t(x)}\right)^2 ~(\rmd \rho_t(x^*)-\rmd \rho_t(-x^*)).
\end{align*}
For any Borel $A$, we have $\rho_t(A)- \rho_t(-A) = (1-t)(\mu(A) - \mu(-A)) + t(\nu(A)-\nu(-A)) = 2t(\nu - \mu)(A)$. Therefore the second term in the above expression becomes
\[-4t\E_{z} \int \left(\frac{\int e^{\sqrt{r}zx + r xx^* - \frac{r}{2} x^2}\rmd(\nu - \mu)(x)}{\int e^{\sqrt{r}zx + r xx^* - \frac{r}{2} x^2}\rmd \rho_t(x)}\right)^2 ~\rmd(\nu - \mu)(x^*).\]
Since both $\mu$ and $\nu$ are absolutely continuous with respect to $\rho_t$ for all $0 \le t < 1$ we write
\[\frac{\rmd^2}{\rmd t^2} \phi(t) = 4 \E_{z,x^*} \left\langle \frac{\rmd(\nu - \mu)}{\rmd\rho_t}(x) \frac{\rmd(\nu - \mu)}{\rmd\rho_t}(x^*) \right\rangle - 4 t\E_{z,x^*} \left\langle \frac{\rmd(\nu - \mu)}{\rmd\rho_t}(x)\right\rangle^2,\]
where the Gibbs average is with respect to the posterior of $x$ given $z,x^*$ under the Gaussian channel $y = \sqrt{r}x^* + z$, and the expectation is under $x^* \sim \rho_t$ and $z \sim \normal(0,1)$. By the Nishimori property, we simplify the above expression to
\[\frac{\rmd^2}{\rmd t^2} \phi(t) = 4(1-t)\E \left[\left\langle \frac{\rmd(\nu - \mu)}{\rmd\rho_t}(x)\right\rangle^2\right],\]
where the expression is valid for all $0\le t <1$. From here we see that the function $\phi$ is convex on $[0,1]$ (where we have closed the right end of the interval by continuity). Since $\phi(0) = \phi'(0) = 0$, $\phi$ is also increasing on $[0,1]$. Therefore we have
\[\phi(1) \ge \half\phi''(0) = 2\E \left[\left\langle \frac{\rmd \nu}{\rmd\mu}(x)-1\right\rangle_{\mu,r}^2\right].\]    
\end{proofof}

\vspace{5mm}
\textbf{Acknowledgments.}
We are grateful to L\'{e}o Miolane for insightful conversations and to Nike Sun for comments on an earlier version of this manuscript. We warmly thank the anonymous reviewers of their feedback.
This research was initiated at the \emph{Workshop on Statistical physics, Learning, Inference and Networks} at Ecole de Physique des Houches, winter 2017.
This work was supported by the Multidisciplinary University Research Initiative under Army Research Office grant number W911NF-17-1-0304.

\vspace{5mm}
\begin{small}
\bibliographystyle{apalike}
\bibliography{phase_transitions}

\begin{thebibliography}{}

\bibitem[Aizenman et~al., 1987]{aizenman1987some}
Aizenman, M., Lebowitz, J.~L., and Ruelle, D. (1987).
\newblock Some rigorous results on the {S}herrington--{K}irkpatrick spin glass
  model.
\newblock {\em Communications in Mathematical Physics}, 112(1):3--20.

\bibitem[Amini and Wainwright, 2009]{amini2008high}
Amini, A.~A. and Wainwright, M.~J. (2009).
\newblock High-dimensional analysis of semidefinite relaxations for sparse
  principal components.
\newblock {\em Annals of Statistics}, 37(5B):2877--2921.

\bibitem[Bai and Yao, 2008]{bai2008central}
Bai, Z. and Yao, J.~f. (2008).
\newblock Central limit theorems for eigenvalues in a spiked population model.
\newblock {\em Annales de l'Institut Henri Poincar{\'e}, Probabilit{\'e}s et
  Statistiques}, 44(3):447--474.

\bibitem[Bai and Yao, 2012]{bai2012sample}
Bai, Z. and Yao, J.~f. (2012).
\newblock On sample eigenvalues in a generalized spiked population model.
\newblock {\em Journal of Multivariate Analysis}, 106:167--177.

\bibitem[Baik et~al., 2005]{baik2005phase}
Baik, J., {Ben Arous}, G., and P{\'e}ch{\'e}, S. (2005).
\newblock Phase transition of the largest eigenvalue for nonnull complex sample
  covariance matrices.
\newblock {\em Annals of Probability}, 33(5):1643--1697.

\bibitem[Baik and Lee, 2016]{baik2016fluctuations}
Baik, J. and Lee, J.~O. (2016).
\newblock Fluctuations of the free energy of the spherical
  {S}herrington--{K}irkpatrick model.
\newblock {\em Journal of Statistical Physics}, 165(2):185--224.

\bibitem[Baik and Lee, 2017]{baik2017fluctuations}
Baik, J. and Lee, J.~O. (2017).
\newblock Fluctuations of the free energy of the spherical
  {S}herrington--{K}irkpatrick model with ferromagnetic interaction.
\newblock In {\em Annales Henri Poincar{\'e}}, volume~18, pages 1867--1917.
  Springer.

\bibitem[Baik and Silverstein, 2006]{baik2006eigenvalues}
Baik, J. and Silverstein, J.~W. (2006).
\newblock Eigenvalues of large sample covariance matrices of spiked population
  models.
\newblock {\em Journal of Multivariate Analysis}, 97(6):1382--1408.

\bibitem[Banerjee, 2018]{banerjee2018contiguity}
Banerjee, D. (2018).
\newblock Contiguity and non-reconstruction results for planted partition
  models: the dense case.
\newblock {\em Electron. J. Probab.}, 23:28 pp.

\bibitem[Banerjee and Ma, 2018]{banerjee2018asymptotic}
Banerjee, D. and Ma, Z. (2018).
\newblock Asymptotic normality and analysis of variance of log-likelihood
  ratios in spiked random matrix models.
\newblock {\em arXiv preprint arXiv:1804.00567}.

\bibitem[Banks et~al., 2017]{banks2017information}
Banks, J., Moore, C., Vershynin, R., Verzelen, N., and Xu, J. (2017).
\newblock Information-theoretic bounds and phase transitions in clustering,
  sparse {PCA}, and submatrix localization.
\newblock In {\em IEEE International Symposium on Information Theory (ISIT)},
  pages 1137--1141. IEEE.

\bibitem[Barbier et~al., 2016]{barbier2016mutual}
Barbier, J., Dia, M., Macris, N., Krzakala, F., Lesieur, T., and Zdeborov{\'a},
  L. (2016).
\newblock Mutual information for symmetric rank-one matrix estimation: A proof
  of the replica formula.
\newblock In {\em Advances in Neural Information Processing Systems (NIPS)},
  pages 424--432.

\bibitem[Benaych-Georges and Nadakuditi, 2011]{benaych2011eigenvalues}
Benaych-Georges, F. and Nadakuditi, R.~R. (2011).
\newblock The eigenvalues and eigenvectors of finite, low rank perturbations of
  large random matrices.
\newblock {\em Advances in Mathematics}, 227(1):494--521.

\bibitem[Berthet and Rigollet, 2013]{berthet2013optimal}
Berthet, Q. and Rigollet, P. (2013).
\newblock Optimal detection of sparse principal components in high dimension.
\newblock {\em Annals of Statistics}, 41(4):1780--1815.

\bibitem[Boucheron et~al., 2013]{boucheron2013concentration}
Boucheron, S., Lugosi, G., and Massart, P. (2013).
\newblock {\em Concentration Inequalities: A Nonasymptotic Theory of
  Independence}.
\newblock Oxford University Press.

\bibitem[Capitaine et~al., 2009]{capitaine2009largest}
Capitaine, M., Donati-Martin, C., and F{\'e}ral, D. (2009).
\newblock The largest eigenvalues of finite rank deformation of large {W}igner
  matrices: convergence and nonuniversality of the fluctuations.
\newblock {\em Annals of Probability}, pages 1--47.

\bibitem[Chatterjee, 2014]{chatterjee2014superconcentration}
Chatterjee, S. (2014).
\newblock {\em Superconcentration and Related Topics}.
\newblock Springer.

\bibitem[Deshpande et~al., 2016]{deshpande2016asymptotic}
Deshpande, Y., Abb\'{e}, E., and Montanari, A. (2016).
\newblock Asymptotic mutual information for the binary stochastic block model.
\newblock In {\em IEEE International Symposium on Information Theory (ISIT)},
  pages 185--189.

\bibitem[Dobriban, 2017]{dobriban2017sharp}
Dobriban, E. (2017).
\newblock Sharp detection in {PCA} under correlations: all eigenvalues matter.
\newblock {\em Annals of Statistics}, 45(4):1810--1833.

\bibitem[{El Alaoui} and Jordan, 2018]{alaoui2018detection}
{El Alaoui}, A. and Jordan, M.~I. (2018).
\newblock Detection limits in the high-dimensional spiked rectangular model.
\newblock {\em arXiv preprint arXiv:1802.07309}.

\bibitem[{El Alaoui} and Krzakala, 2018]{alaoui2018estimation}
{El Alaoui}, A. and Krzakala, F. (2018).
\newblock Estimation in the spiked {W}igner model: A short proof of the replica
  formula.
\newblock {\em arXiv preprint arXiv:1801.01593}.

\bibitem[F{\'e}ral and P{\'e}ch{\'e}, 2007]{feral2007largest}
F{\'e}ral, D. and P{\'e}ch{\'e}, S. (2007).
\newblock The largest eigenvalue of rank one deformation of large {W}igner
  matrices.
\newblock {\em Communications in Mathematical Physics}, 272(1):185--228.

\bibitem[Franz and Parisi, 1995]{franz1995recipes}
Franz, S. and Parisi, G. (1995).
\newblock Recipes for metastable states in spin glasses.
\newblock {\em Journal de Physique I}, 5(11):1401--1415.

\bibitem[Franz and Parisi, 1998]{franz1998effective}
Franz, S. and Parisi, G. (1998).
\newblock Effective potential in glassy systems: theory and simulations.
\newblock {\em Physica A: Statistical Mechanics and its Applications},
  261(3-4):317--339.

\bibitem[Guerra, 2001]{guerra2001sum}
Guerra, F. (2001).
\newblock Sum rules for the free energy in the mean field spin glass model.
\newblock {\em Fields Institute Communications}, 30:161--170.

\bibitem[Guerra, 2003]{guerra2003broken}
Guerra, F. (2003).
\newblock Broken replica symmetry bounds in the mean field spin glass model.
\newblock {\em Communications in Mathematical Physics}, 233(1):1--12.

\bibitem[Johnstone, 2001]{johnstone2001distribution}
Johnstone, I.~M. (2001).
\newblock On the distribution of the largest eigenvalue in principal components
  analysis.
\newblock {\em Annals of Statistics}, pages 295--327.

\bibitem[Johnstone and Lu, 2009]{johnstone2009consistency}
Johnstone, I.~M. and Lu, A.~Y. (2009).
\newblock On consistency and sparsity for principal components analysis in high
  dimensions.
\newblock {\em Journal of the American Statistical Association},
  104(486):682--693.

\bibitem[Johnstone and Onatski, 2015]{johnstone2015testing}
Johnstone, I.~M. and Onatski, A. (2015).
\newblock Testing in high-dimensional spiked models.
\newblock {\em arXiv preprint arXiv:1509.07269}.

\bibitem[Krzakala et~al., 2016]{krzakala2016mutual}
Krzakala, F., Xu, J., and Zdeborov{\'a}, L. (2016).
\newblock Mutual information in rank-one matrix estimation.
\newblock In {\em Information Theory Workshop (ITW)}, pages 71--75.

\bibitem[Ledoit and Wolf, 2002]{ledoit2002some}
Ledoit, O. and Wolf, M. (2002).
\newblock Some hypothesis tests for the covariance matrix when the dimension is
  large compared to the sample size.
\newblock {\em Annals of Statistics}, pages 1081--1102.

\bibitem[Lelarge and Miolane, 2016]{lelarge2016fundamental}
Lelarge, M. and Miolane, L. (2016).
\newblock Fundamental limits of symmetric low-rank matrix estimation.
\newblock {\em arXiv preprint arXiv:1611.03888}.

\bibitem[Lesieur et~al., 2015]{lesieur2015phase}
Lesieur, T., Krzakala, F., and Zdeborov{\'a}, L. (2015).
\newblock Phase transitions in sparse {PCA}.
\newblock In {\em IEEE International Symposium on Information Theory (ISIT)},
  pages 1635--1639.

\bibitem[M{\'e}zard et~al., 1990]{mezard1990spin}
M{\'e}zard, M., Parisi, G., and Virasoro, M.-A. (1990).
\newblock {\em Spin {G}lass {T}heory and {B}eyond.}
\newblock World Scientific Publishing.

\bibitem[Nadler, 2008]{nadler2008finite}
Nadler, B. (2008).
\newblock Finite sample approximation results for principal component analysis:
  A matrix perturbation approach.
\newblock {\em Annals of Statistics}, pages 2791--2817.

\bibitem[Nishimori, 2001]{nishimori2001statistical}
Nishimori, H. (2001).
\newblock {\em Statistical {P}hysics of {S}pin {G}lasses and {I}nformation
  {P}rocessing: {A}n {I}ntroduction}, volume 111.
\newblock Clarendon Press.

\bibitem[Onatski et~al., 2013]{onatski2013asymptotic}
Onatski, A., Moreira, M.~J., and Hallin, M. (2013).
\newblock Asymptotic power of sphericity tests for high-dimensional data.
\newblock {\em Annals of Statistics}, 41(3):1204--1231.

\bibitem[Onatski et~al., 2014]{onatski2014signal}
Onatski, A., Moreira, M.~J., and Hallin, M. (2014).
\newblock Signal detection in high dimension: The multispiked case.
\newblock {\em Annals of Statistics}, 42(1):225--254.

\bibitem[Paul, 2007]{paul2007asymptotics}
Paul, D. (2007).
\newblock Asymptotics of sample eigenstructure for a large dimensional spiked
  covariance model.
\newblock {\em Statistica Sinica}, pages 1617--1642.

\bibitem[P{\'e}ch{\'e}, 2006]{peche2006largest}
P{\'e}ch{\'e}, S. (2006).
\newblock The largest eigenvalue of small rank perturbations of {H}ermitian
  random matrices.
\newblock {\em Probability Theory and Related Fields}, 134(1):127--173.

\bibitem[P{\'e}ch{\'e}, 2014]{peche2014ICM}
P{\'e}ch{\'e}, S. (2014).
\newblock Deformed ensembles of random matrices.
\newblock In {\em Proceedings of the International Congress of Mathematicians,
  Seoul}, volume III, pages 1059--1174.

\bibitem[Perry et~al., 2016]{perry2016optimality_annals}
Perry, A., Wein, A.~S., Bandeira, A.~S., and Moitra, A. (2016).
\newblock On the optimality and sub-optimality of {PCA} for spiked random
  matrix models.
\newblock {\em Annals of Statistics (to appear). arXiv preprint
  arXiv:1609.05573}.

\bibitem[Talagrand, 2011a]{talagrand2011mean1}
Talagrand, M. (2011a).
\newblock {\em Mean {F}ield {M}odels for {S}pin {G}lasses. {V}olume I: {B}asic
  {E}xamples}, volume~54.
\newblock Springer Science \& Business Media.

\bibitem[Talagrand, 2011b]{talagrand2011mean2}
Talagrand, M. (2011b).
\newblock {\em Mean {F}ield {M}odels for {S}pin {G}lasses. {V}olume II:
  {A}dvanced {R}eplica-{S}ymmetry and {L}ow {T}emperature}, volume~55.
\newblock Springer Science \& Business Media.

\bibitem[Van~der Vaart, 2000]{vandervaart2000asymptotic}
Van~der Vaart, A.~W. (2000).
\newblock {\em Asymptotic Statistics}.
\newblock Cambridge University Press.

\end{thebibliography}
\end{small}

\end{document}